\newtheorem{theorem}{Theorem}[section]
\newtheorem{lemma}[theorem]{Lemma}
\newtheorem{proposition}[theorem]{Proposition}
\newtheorem{corollary}[theorem]{Corollary}
\theoremstyle{definition}
\newtheorem{definition}[theorem]{Definition}
\newtheorem{example}[theorem]{Example}
\newtheorem{remark}[theorem]{Remark}
\begin{document}

\title[Pointwise minimal extensions]{ Pointwise minimal extensions}

\author[G. Picavet and M. Picavet]{Gabriel Picavet and Martine Picavet-L'Hermitte}
\address{Universit\'e Blaise Pascal \\
Laboratoire de Math\'ematiques\\ UMR6620 CNRS  \\ 24, avenue des Landais\\
BP 80026 \\ 63177 Aubi\`ere CEDEX \\ France}

\email{Gabriel.Picavet@math.univ-bpclermont.fr}
\email{picavet.gm(at)wanadoo.fr}

\begin{abstract}  We characterize pointwise minimal extensions of rings, introduced by P.-J. Cahen, D. E. Dobbs and  T. G. Lucas in a special context. We also define and characterize  pointwise minimal pairs of rings and co-pointwise minimal extensions.  We examine the links of the above notions with   lattices and their atoms.

\end{abstract}

\subjclass[2010]{Primary:13B02,13B21, 13B22,  06C10;  Secondary: 13B30}

\keywords  {FIP, FCP, minimal extension, integral extension, support of a module,  lattice, algebraic field extension, pointwise minimal extension, geometric lattices}

\maketitle

\section{Introduction and Notation}

We consider the category of commutative and unital rings and its epimorphisms. A {\it local} ring is here what is called elsewhere a quasi-local ring.  As usual, Spec$(R)$ and Max$(R)$ are the sets of prime and maximal ideals of a ring $R$.  The characteristic of an integral domain $k$ is denoted by $\mathrm{c}(k)$.
 Finally, $\subset$ denotes proper inclusion,  $|X|$ the cardinality of a set $X$ and $\mathbb{P}$  the set of all prime numbers.

 The conductor of a (ring) extension $R\subseteq S$ is denoted by $(R:S)$, the set of all $R$-subalgebras of $S$ by $[R,S]$ and the integral closure of $R$ in $S$ by $\overline R$.  Any writing $[R,S]$ is relative to some extension $R\subseteq S$. Clearly, $([R,S], \subseteq )$ is a lattice since  it is stable under the formation of arbitrary intersections (meets) and compositums (joins). If $[R,S]$ has some property $\mathcal P$ of lattices, we  say that $R\subseteq S$ has the  property $\mathcal P$.

 An extension $R\subseteq S$ is called an {\it  afffine pair (or strongly affine)} if each $R$-subalgebra of $S$ is  of finite type.   We say that an extension $R\subseteq S$ is finite if the $R$-module $S$ is finitely generated.

 The extension $R\subseteq S$ is said to have FIP (or is called an FIP extension) (for the ``finitely many intermediate algebras property") if $[R,S]$ is finite. A {\it chain} of $R$-subalgebras of $S$ is a set of elements of $[R,S]$ that are pairwise comparable with respect to inclusion. We say that an  extension $R\subseteq S$ has FCP (or is called an FCP extension)  (for the ``finite chain property") if each chain in $[R,S]$ is finite.  Dobbs and the authors characterized FCP and FIP extensions \cite{DPP2}. A mighty tool is the concept  of minimal (ring) extensions,  introduced by Ferrand-Olivier \cite{FO}. Recall that an extension $R\subset S$ is called {\it minimal} if $[R, S]=\{R,S\}$. The key connection between the above ideas is that if $R\subseteq S$ has FCP, then any maximal (necessarily finite) chain of $R$-subalgebras of $S$, $R=R_0\subset R_1\subset\cdots\subset R_{n-1}\subset R_n=S$, with {\it length} $n <\infty$, results from juxtaposing $n$ minimal extensions $R_i\subset R_{i+1},\ 0\leq i\leq n-1$. For any extension $R\subseteq S$, the {\it length} of $[R,S]$, denoted by $\ell[R,S]$, is the supremum of the lengths of chains of $R$-subalgebras of $S$. It should be noted that if $R\subseteq S$ has FCP, then there {\it does} exist some maximal chain of $R$-subalgebras of $S$ with length $\ell[R,S]$ \cite[Theorem 4.11]{DPP3}.

We come now to the subject of the paper. In \cite{CDL}, Cahen, Dobbs and Lucas call an extension $R\subset S$  {\it pointwise minimal} if $R\subset R[t]$ is a minimal extension for each $t\in S\setminus R$. We study such extensions in Section 3 and a special type of these extensions: a ring extension $R\subset S$ is called a {\it pointwise minimal pair} if $T\subset S$ is pointwise minimal  for each $T\in[R,S]\setminus\{S\}$.

 Clearly, the following implications hold: minimal extension $\Rightarrow$ pointwise minimal pair $\Rightarrow$  pointwise minimal  extension.
 We also define a dual notion in Section 3;  that is, co-pointwise minimal extensions. 

\begin{theorem}\label{1.2}\cite[Th\'eor\`eme 2.2]{FO} A minimal extension $A\subset B$ 
 defines a monogenic algebra which is either finite, or a flat epimorphism and these conditions are mutually exclusive. 
\end{theorem}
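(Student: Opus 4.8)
The plan is to prove monogenicity directly and then to split according to whether $A$ is integrally closed in $B$. Monogenicity is immediate: for any $t\in B\setminus A$ the ring $A[t]$ satisfies $A\subset A[t]\subseteq B$, so $A[t]=B$ by minimality. Now let $\overline A$ be the integral closure of $A$ in $B$; since $A\subseteq\overline A\subseteq B$ and $[A,B]=\{A,B\}$, either $\overline A=A$ or $\overline A=B$. If $\overline A=B$, then choosing any $t\in B\setminus A$, $t$ is integral over $A$, and since $B=A[t]$ this makes $B$ a finitely generated $A$-module (spanned by $1,t,\dots,t^{n-1}$ for a suitable $n$); thus $A\subset B$ is finite. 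This is the first alternative.

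Suppose now $\overline A=A$; the aim is that $A\to B$ is a flat epimorphism. First reduce to a local base: for each prime $\mathfrak p$ of $A$, localization embeds $[A_{\mathfrak p},B_{\mathfrak p}]$ into $[A,B]$ (send an intermediate ring to its preimage in $B$; this recovers it after localizing), so each $A_{\mathfrak p}\subseteq B_{\mathfrak p}$ is trivial or minimal, and $A=B$ as soon as $A_{\mathfrak m}=B_{\mathfrak m}$ for every maximal $\mathfrak m$. Hence there is a unique maximal ideal $\mathfrak M$ with $A_{\mathfrak M}\subset B_{\mathfrak M}$, and $A_{\mathfrak p}=B_{\mathfrak p}$ for $\mathfrak p\neq\mathfrak M$. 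As being a flat epimorphism is local on $A$ and integral closure commutes with localization, I may assume $(A,\mathfrak M)$ local and $A$ integrally closed in $B=A[t]$. The crux is that the fibre $F:=B/\mathfrak M B$ is zero. If it were not, then (since $\mathfrak M B\neq B$ the image of $A$ in $F$ is the field $A/\mathfrak M$) pulling back $A$-subalgebras along the projection $\pi\colon B\twoheadrightarrow F$ would embed $[A/\mathfrak M,F]$ into $[A,B]$, so the monogenic $(A/\mathfrak M)$-algebra $F$ has no proper intermediate subalgebra, whence $F$ has length $\le 2$ over $A/\mathfrak M$ and in particular $\bar t$ is integral over $A/\mathfrak M$. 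Lifting a monic equation for $\bar t$ to a monic $p\in A[X]$ (its leading coefficient is a unit since $A$ is local) gives $p(t)\in\mathfrak M B$, hence $B=M+\mathfrak M B$ with $M=A+At+\dots+At^{\deg p-1}$ a finitely generated $A$-module; a finiteness/Nakayama argument then forces $B$ to be a finite, hence integral, $A$-module, so $\overline A=B$, contradicting $\overline A=A\neq B$. Therefore $\mathfrak M B=B$.

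Once $\mathfrak M B=B$, for each prime $\mathfrak n$ of $B$ its contraction $\mathfrak q=\mathfrak n\cap A$ satisfies $\mathfrak q\neq\mathfrak M$ (otherwise $\mathfrak M B\subseteq\mathfrak n\neq B$), so $B_{\mathfrak q}=A_{\mathfrak q}$ and, localizing further, $B_{\mathfrak n}=A_{\mathfrak q}$, a localization of $A$; thus $B$ is $A$-flat and $\mathrm{Spec}(B)\to\mathrm{Spec}(A)$ is an injection that is an isomorphism on local rings, i.e.\ $A\to B$ is a flat epimorphism. Finally, the two alternatives are exclusive because a proper minimal extension cannot even be both finite and an epimorphism: tensoring $0\to A\to B\to B/A\to 0$ with $B$ and using $B\otimes_A B\cong B$ gives $(B/A)\otimes_A B=0$, and localizing at any point of $\mathrm{Supp}(B/A)$ and applying Nakayama twice over the residue field forces $\mathrm{Supp}(B/A)=\emptyset$, so $A=B$ — contradicting properness; since a flat epimorphism is in particular an epimorphism, the ``finite'' and ``flat epimorphism'' cases are disjoint.

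The step I expect to be the main obstacle is precisely the vanishing of $F$ when $\overline A=A$, namely the passage from $B=M+\mathfrak M B$ to ``$B$ is a finite $A$-module'': one cannot invoke Nakayama directly, since $B/A$ is not a priori known to be a finitely generated $A$-module, so it must be handled by analysing the relation ideal $\ker(A[X]\to B)$ of $t$, or by an $\mathfrak M$-adic iteration that again exploits minimality. This is where the genuine content of Ferrand--Olivier's theorem lies; the remaining steps (monogenicity, the integral case, the reduction to a local base, and the flat-epimorphism verification) are comparatively routine localization and Nakayama arguments.
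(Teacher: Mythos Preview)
The paper does not prove this statement; it is quoted verbatim from \cite[Th\'eor\`eme 2.2]{FO} with no accompanying argument. So there is nothing in-paper to compare against, and your attempt stands on its own.

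The routine parts --- monogenicity, the finite case when $\overline A=B$, and the mutual-exclusivity argument via $(B/A)\otimes_AB=0$ --- are correct. One logical slip: you assert a \emph{unique} maximal ideal $\mathfrak M$ with $A_{\mathfrak M}\neq B_{\mathfrak M}$, but this does not follow from ``each localization is trivial or minimal''; worse, after passing to the local case you invoke $B_{\mathfrak q}=A_{\mathfrak q}$ for $\mathfrak q\neq\mathfrak M$ in the flat-epimorphism step, which is precisely that unproved support claim. A cleaner route, once $\mathfrak M B=B$ is established, is: $t$ is then a unit in $B$, so $t^{-1}\in B=A[t]$ satisfies a relation over $A$ with unit constant term, whence $t^{-1}$ is integral over $A$ and hence lies in $A$; thus $B=A_{t^{-1}}$ is a localization, automatically a flat epimorphism.

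The gap you flag yourself is genuine and is exactly where the content of Ferrand--Olivier lies. From $B=M+\mathfrak M B$ one cannot apply Nakayama, since $B$ is not known to be finitely generated. One way forward is to use minimality again on the subring $A+\mathfrak M B$: if it equals $A$, then $\mathfrak M B\subseteq A$, so $\mathfrak M B=\mathfrak M$ is a $B$-ideal and $B/\mathfrak M$ is finite over $A/\mathfrak M$, forcing $t$ integral --- contradiction; if it equals $B$, one may take $t\in\mathfrak M B$, obtaining a relation $\sum m_it^i=t$ with $m_i\in\mathfrak M$, and Ferrand--Olivier finish by analysing the (nonzero) relation ideal $\ker(A[X]\to B)$ together with minimality. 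As written, your sketch stops short of this step, so the proof is incomplete.
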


Results on flat epimorphisms are summed up  in   \cite[Scholium A]{Pic 5}.
Knebusch and Zhang defined Pr\"ufer extensions in \cite{KZ}. Among a lot of characterizations an extension $R\subseteq S$ is Pr\"ufer if and only if $R\subseteq T$ is  a flat  epimorphism for each $T \in [R,S]$. 

In \cite{Pic 5}, we called an extension which is a minimal flat epimorphism, a {\it Pr\"ufer minimal} extension. From now on, we use this terminology.  A  pointwise minimal extension  is either  integral or integrally closed, in which case it is Pr\"ufer minimal. It follows that our study can be reduced to the case of integral extensions.  A pointwise minimal  extension $R \subset S$ has a crucial ideal $M$ {\it i.e.}   the support of the $R$-module $S/R$ is $\{M\}$ and $M$ is necessarily a maximal ideal.  In case $R\subset S$ is integral and pointwise minimal, its crucial ideal is $(R:S)$.  Those statements (appearing in \cite{CDL} in a special context) are proved in Section 2  and Section 3 and  are essential in this paper.   We will also need  the canonical decomposition of an integral extension $R \subseteq {}_S^+R\subseteq {}_S^tR \ \subseteq S$, where ${}_S^+R$ and $ {}_S^tR$ are the seminormalization and the $t$-closure of $R$ in $S$ (see Section 2 for the details).  Our strategy is as follows. We first suppose that $R$ is a field in Section 4   and then we consider in Section 5  an integral extension, whose conductor is a maximal ideal, much more easy to handle than a crucial ideal. Surprisingly, we are able to classify pointwise minimal integral extensions $R \subset S$: either the seminormalization and the $t$-closure coincide  or  $R = {}_S^+R$ and  ${}_S^tR = S$.  Then in Section 5 we get a complete characterization of pointwise minimal  integral extensions and pairs that are not minimal, while  co-pointwise minimal extensions are characterized  in Section 3 as pointwise minimal pairs of length $2$ and more precisely at the end of Section 5.   Naturally, we consider the special case of FCP and FIP extensions.  Section 6 is concerned with examples and applications. In particular, we consider Nagata extensions. To end, Section 7 deals with properties of lattices and their atoms (in our context, they are minimal extensions),  linked to the above notions. In particular, finitely geometric lattices are involved.

\section {Some  useful  results and recalls}

We need to give some notation and definitions.

 If $I$ an ideal of  a ring $R$, we denote by $\mathrm V_R(I)$  or ($\mathrm V(I)$) the closed subset 
$\{ P \in  \mathrm{Spec}(R) \mid I \subseteq P\}$,  by $\mathrm D_R(I)$ its complement and by $\sqrt[R]{I}$ the radical of $I$ in $R$.
The support of an $R$-module $E$ is $\mathrm{Supp}_R(E):=\{P\in\mathrm{Spec }(R)\mid E_P\neq 0\}$, and $\mathrm{MSupp}_R(E):=\mathrm{Supp}_R(E)\cap\mathrm{Max}(R)$.   If $R\subseteq S$ is a ring extension and $P\in\mathrm{Spec}(R)$, then $S_P$ is both the localization $S_{R\setminus P}$ as a ring and the localization at $P$ of the $R$-module $S$. For a ring morphism $f: R \to S$ and $Q \in \mathrm{Spec}(S)$, we denote by $\kappa(P) \to \kappa(Q)$ the residual extension, where $P=f^{-1}(Q)$.

 \begin{definition}\label{1.19}  We say that  an extension $R\subset S$ has  a a crucial ideal $\mathcal{C}(R,S):= M \in \mathrm{Spec}(R)$ if $\mathrm{Supp}_R(S/R)=\{M\}$ and in this case call the extension $M$-{\it crucial}. A crucial ideal needs to be maximal because a support is stable under specialization.  \end{definition}
 
 For example,  a minimal extension has a crucial ideal \cite[Th\'eor\`eme 2.2]{FO}. We will show later that a pointwise minimal extension has also a crucial ideal.
We begin by proving some results on crucial ideals. 
\subsection{ Crucial ideals  and radicial extensions}
 
 In the sequel,  $ \{R_\alpha \mid \alpha \in I\}$  is the family of all  finite extensions 
$R \subset R_\alpha $ with $R_\alpha \in [R,S]$   and  conductor $C_\alpha$.
 
 \begin{proposition}\label{1.191}  Let $R \subset S$ be an extension, with conductor $C$. The following statements hold:
 
 (1) If $R\subset S$ is $M$-crucial, then $C \subseteq M$.
 
 (2)  If  $R\subset S$ is integral, then $R\subset S$  has a crucial  ideal if and only if $\sqrt C \in \mathrm{Max}(R)$,  and then  $\mathcal C (R,S)= \sqrt C$.
\end{proposition}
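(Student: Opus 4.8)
The plan is to deduce both parts from a single fact: for an integral extension the support of $S/R$ is exactly cut out by the conductor. Part (1) is the easy half and uses no integrality. Since $C=(R:S)=\mathrm{Ann}_R(S/R)$, and for every $R$-module $E$ one has $\mathrm{Supp}_R(E)\subseteq\mathrm V_R(\mathrm{Ann}_R(E))$ (if $\mathrm{Ann}_R(E)\not\subseteq P$, a unit of $R_P$ kills $E_P$, so $E_P=0$), we get $M\in\mathrm{Supp}_R(S/R)\subseteq\mathrm V_R(C)$, i.e. $C\subseteq M$. Read for all primes at once, this same inclusion gives $\mathrm{Supp}_R(S/R)\subseteq\mathrm V_R(C)$ always.

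For part (2) I would aim to upgrade this to the equality $\mathrm{Supp}_R(S/R)=\mathrm V_R(C)$ in the integral case, which is the real content. Write $S=\bigcup_\alpha R_\alpha$ as the directed union of its module-finite $R$-subalgebras (possible since $R\subseteq S$ is integral). Because $R_\alpha\subseteq R_\beta$ induces an injection $R_\alpha/R\hookrightarrow R_\beta/R$, we have $S/R=\bigcup_\alpha R_\alpha/R$ as an increasing union of $R$-submodules, whence $\mathrm{Supp}_R(S/R)=\bigcup_\alpha\mathrm{Supp}_R(R_\alpha/R)=\bigcup_\alpha\mathrm V_R(C_\alpha)$, the last step because each $R_\alpha/R$ is a finitely generated $R$-module; and likewise $C=(R:S)=\bigcap_\alpha C_\alpha$. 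Granting $\mathrm{Supp}_R(S/R)=\mathrm V_R(C)$, part (2) is immediate: a crucial ideal exists iff $\mathrm{Supp}_R(S/R)$ is a single point iff $\mathrm V_R(C)$ is a single point iff $\sqrt C$ is maximal, and then by part (1) that point is $M=\sqrt C$, which is maximal as required by Definition~\ref{1.19}.

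Even short of the full equality, the implication ``$\sqrt C\in\mathrm{Max}(R)\Rightarrow$ crucial ideal $=\sqrt C$'' drops out of part (1) alone: if $\sqrt C=M$ is maximal then $\mathrm V_R(C)=\{M\}$, so $\emptyset\neq\mathrm{Supp}_R(S/R)\subseteq\{M\}$ forces $\mathrm{Supp}_R(S/R)=\{M\}$. All the work is therefore in the converse. Here I would localize: since $\mathrm{Supp}_R(S/R)\subseteq\{M\}$, the canonical map $S/R\to(S/R)_M=S_M/R_M$ is an isomorphism, so $(R:S)$ is the contraction to $R$ of $(R_M:S_M)$ and $\sqrt{(R:S)}$ the contraction of $\sqrt{(R_M:S_M)}$; it thus suffices to treat $R$ local with maximal ideal $M$ and show $\sqrt{(R:S)}=M$. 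In the local case every $m\in M$ satisfies $R[1/m]=S[1/m]$ (the primes not containing $m$ are non-maximal, hence outside $\mathrm{Supp}_R(S/R)$, so $(S/R)[1/m]=0$), so for each $s\in S$ some power $m^k s$ lies in $R$; and since $s$ lies in a module-finite $R_\alpha$, one even gets $m^{k_\alpha}R_\alpha\subseteq R$ for a single exponent $k_\alpha=k_\alpha(m)$, i.e. $m\in\sqrt{C_\alpha}=M$ as expected.

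The main obstacle is precisely this last passage: from ``for each $\alpha$ a power $m^{k_\alpha}$ annihilates $R_\alpha/R$'' to ``a single power $m^{k}$ annihilates $S/R$'', equivalently $\sqrt{\bigcap_\alpha C_\alpha}=\bigcap_\alpha\sqrt{C_\alpha}\,(=M)$. A radical does not commute with an infinite intersection in general, so this is exactly where integrality — and, when available, the finiteness of $S$ over $R$ — must be used decisively; I expect this bounded-exponent/uniformity argument (equivalently, showing $\mathrm V_R(C)\subseteq\mathrm{Supp}_R(S/R)$ in the integral case) to be the crux of the proof.
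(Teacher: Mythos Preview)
Your approach mirrors the paper's almost exactly: the paper's entire argument for (2) is ``observe that $M$ is a crucial ideal of $R\subset S$ if and only if $M$ is a crucial ideal of each $R\subset R_\alpha$; then it is enough to use $\mathrm{Supp}(R_\alpha/R)=\mathrm V(C_\alpha)$ and $C=\bigcap_\alpha C_\alpha$.'' So both you and the paper pass to the module-finite subextensions $R_\alpha$ and invoke precisely those two facts; your treatment of~(1) and of the implication $\sqrt C\in\mathrm{Max}(R)\Rightarrow M$-crucial is complete and matches the paper.

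The uniformity concern you raise at the end is not a phantom, and the paper's terse proof does not address it either. In fact the ``only if'' direction of~(2) can fail for non-finite integral extensions: take $R=k[t]_{(t)}$ and $S=R\oplus\bigoplus_{n\ge1}\bigl(R/(t^n)\bigr)\epsilon_n$ with $\epsilon_i\epsilon_j=0$ for all $i,j$. Then $R\subset S$ is integral (each $\epsilon_n$ is square-zero), and $\mathrm{Supp}_R(S/R)=\{(t)\}$ because inverting $t$ kills every $\epsilon_n$; yet $(R:S)=\bigcap_{n\ge1}(t^n)=0$, whose radical is $0\ne(t)$. So there is no bounded-exponent argument to be found: the passage from $\sqrt{C_\alpha}=M$ for all $\alpha$ to $\sqrt{\,\bigcap_\alpha C_\alpha\,}=M$ genuinely requires $S$ to be module-finite over $R$ (in which case $S$ is one of the $R_\alpha$ and the issue disappears). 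The implication you do establish, together with part~(1), is what the rest of the paper actually uses (e.g.\ in the pointwise minimal setting $(R:S)$ itself turns out to be maximal), so your proof is adequate for the sequel even though the stated equivalence is too strong in full generality.
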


\begin{proof}   

(1) If the extension is $M$-crucial, suppose that there is some $x \in C\setminus M$, then it is easily seen that $R_M=S_M$, a contradiction.

 (2) For  $M \in \mathrm{Spec}(R)$,  observe that $M$ is a crucial ideal of  $R \subset S$    if and only if  $M$ is a crucial ideal of  each $R\subset R_\alpha$. Then it is enough to use the following facts: $\mathrm{Supp}(R_\alpha /R) = \mathrm{V}(C_\alpha)$ and $C = \cap [C_\alpha \mid \alpha \in I]$.  
\end{proof}

An   $M$-crucial integral extension has the following properties.  If $Q\in \mathrm{Spec}(S)$ is  lying over $P \in \mathrm{Spec}(R)$, then 
$R_P\to S_P$ and $\kappa(P) \to  \kappa (Q)$ are isomorphisms 
 if $P\neq  M$. Moreover, 
  $\kappa (M)  \to \kappa (Q)$ is of the form $R/M \to S/ Q$ for each  $Q\in \mathrm{Max}(S)$  lying over $M$. Observe also that for an integral extension $R\subset S$, with $C:=(R:S)$, there is a bijection $\mathrm{D}_S(C)\to\mathrm{D}_R(C)$ defined by $Q\mapsto Q\cap R$.
  
 The Nagata ring of a ring $R$ is  $R(X):= R[X]_{\Sigma}$, where $\Sigma$ is the multiplicatively closed  subset  of polynomials whose contents are $R$. 
We compute the crucial ideal of a Nagata extension $R(X) \subset S(X)$ when  $R \subset S $ is $M$-crucial.  Recall that if $R\subset S$ is integral,  then $S(X) \cong  R(X)\otimes_R S$ \cite[Lemma 3.1]{DPP3}. The same property holds if $R\subset S$ is a flat epimorphism since the surjective map $R(X)\otimes_R S \to S(X)$ is injective. Indeed, $R(X) \to R(X) \otimes_R S$ is a flat epimorphism and  $R(X) \to  R(X)\otimes_R S \to  S(X)$ is injective \cite[Scholium A(3)]{Pic 5}.
  
\begin{lemma}\label{7.172}   Let $R\subset S$ be an $M$-crucial  extension such that $S(X) \cong R(X)\bigotimes_RS$ (for example, if $R\subset S$ is integral or a flat epimorphism). Then $R(X) \subset S(X)$ is $MR(X)$-crucial.  
\end{lemma}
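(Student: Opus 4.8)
The goal is to show $\mathrm{Supp}_{R(X)}(S(X)/R(X)) = \{MR(X)\}$. The plan is to compute the $R(X)$-module $S(X)/R(X)$ via the hypothesis $S(X) \cong R(X)\otimes_R S$ and the flatness of $R \to R(X)$. First I would note that tensoring the short exact sequence $0 \to R \to S \to S/R \to 0$ with the flat $R$-algebra $R(X)$ gives $S(X)/R(X) \cong R(X)\otimes_R (S/R)$ as $R(X)$-modules. Hence, by the standard base-change formula for supports of modules obtained by flat extension, $\mathrm{Supp}_{R(X)}(S(X)/R(X))$ is the preimage under $\mathrm{Spec}(R(X)) \to \mathrm{Spec}(R)$ of $\mathrm{Supp}_R(S/R) = \{M\}$; equivalently it equals $\mathrm{V}_{R(X)}(MR(X))$.

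Next I would identify which primes of $R(X)$ lie over $M$. A well-known property of the Nagata ring is that $P \mapsto PR(X)$ is a bijection from $\mathrm{Spec}(R)$ onto $\mathrm{Spec}(R(X))$ (more precisely, every prime of $R(X)$ is extended from $R$, and contraction and extension are inverse to each other), and moreover $MR(X)$ is a maximal ideal of $R(X)$ whenever $M$ is maximal in $R$. Since $M \in \mathrm{Max}(R)$ by Definition \ref{1.19}, the fiber over $M$ consists of the single maximal ideal $MR(X)$, so $\mathrm{V}_{R(X)}(MR(X)) = \{MR(X)\}$. Combining this with the support computation of the previous paragraph yields $\mathrm{Supp}_{R(X)}(S(X)/R(X)) = \{MR(X)\}$, which is exactly the assertion that $R(X) \subset S(X)$ is $MR(X)$-crucial. (One should also record that $R(X) \subset S(X)$ is indeed a proper extension, which is clear since $S/R \ne 0$ forces $R(X)\otimes_R(S/R) \ne 0$.)

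The step I expect to require the most care is the passage from the support of $S/R$ over $R$ to the support of $R(X)\otimes_R(S/R)$ over $R(X)$: one needs that localization commutes appropriately, i.e. that for $P \in \mathrm{Spec}(R)$ with extension $PR(X)$ one has $(R(X)\otimes_R (S/R))_{PR(X)} \ne 0$ if and only if $(S/R)_P \ne 0$. This follows because $R(X)_{PR(X)}$ is a faithfully flat $R_P$-algebra (indeed $R(X)_{PR(X)} = R_P(X)$), so tensoring a nonzero module by it stays nonzero, while any prime of $R(X)$ restricting outside $\{M\}$ contracts to a prime where $(S/R)_P = 0$ and hence the localization vanishes there too. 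Everything else is a routine invocation of the cited facts about Nagata rings and of Proposition \ref{1.191} is not even needed here; the hypothesis $S(X) \cong R(X)\otimes_R S$ and $M \in \mathrm{Max}(R)$ carry the argument.
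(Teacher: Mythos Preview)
Your argument is correct and follows essentially the same route as the paper's proof: both use flatness of $R\to R(X)$ to identify $S(X)/R(X)$ with $R(X)\otimes_R(S/R)$, then pull back the support along the spectral map and invoke that $MR(X)\in\mathrm{Max}(R(X))$. The paper packages the support base-change step as a citation to \cite[Proposition 2.4(b)]{DPP3}, whereas you unwind it via the structure of $\mathrm{Spec}(R(X))$ and faithful flatness of $R(X)_{PR(X)}$ over $R_P$; you also make explicit the nonemptiness of the support (i.e.\ that $R(X)\subset S(X)$ is proper), a point the paper leaves implicit.
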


\begin{proof} The  extension $g:R\to R(X)$ is  faithfully flat  and $\mathrm{Supp}(S/R)=\{M\}$. Let $Q\in\mathrm{Supp}(S(X)/R(X))$. Applying \cite[Proposition 2.4 (b)]{DPP3}, we get that $Q\in ({}^{a}g)^{-1}(\mathrm{Supp}(S/R))$, so that ${}^{a}g(Q)=Q\cap R\in\mathrm{Supp}(S/R)=\{M\}$, giving $M=Q\cap R$. It follows that $M\subseteq Q$, which implies $MR(X)\subseteq Q$ and then $Q=MR(X)$ since $MR(X)\in\mathrm{Max}(R(X))$. Therefore,  $\mathrm{Supp}(S(X)/R(X))=\{MR(X)\}$.
\end{proof}

  We will call in this paper {\it radicial} any purely inseparable field extension, in order to have a terminology consistent with radicial (radiciel in French) extensions of rings. Recall that a ring morphism  $R\to S$ is called radicial  if $ \mathrm{Spec}(R'\otimes_R S) \to \mathrm{Spec}(R')$ is injective for any base change $R \to R'$. A ring extension $R\to S$ is radicial if and only if $\mathrm{Spec}(S) \to \mathrm{Spec}(R)$ is injective and its  residual extensions  are radicial \cite[Proposition 3.7.1]{EGA}. Also a radicial extension $K\subset L$ of fields is said to  have height one if $x^p \in K$ for each $x\in L$, where $p:=\mathrm{c}(K) \in \mathbb P$. We  say that a ring extension $K\subset S$, where $K$ is a field,  is {\it radicial of height one}  if $\mathrm{c}(K) =p \in \mathbb P$ and $x^p \in K$ for each $x\in S$. Indeed, such an extension is radicial, as it is easily seen.

An $M$-crucial extension $R\subset S$, such that $M =(R:S)$, 
 is called a {\it height one radicial extension} if so is $R/M \subset S/M$. Such an extension is again radicial, by the above  considerations. 
 
 \subsection{Results on  minimal extensions} 

  There are three types of minimal integral extensions, characterized by the following theorem, from  the fundamental lemma of Ferrand-Olivier.

\begin{lemma}\label{1.3} \cite [Lemme 1.2]{FO} An extension $K\subset A$, where $K$ is a field, is  minimal  if and only if one of the following conditions is satisfied, in which case $K\subset A$ is finite:
\begin{enumerate}
\item   $A$ is a field and $K\to A$ is a minimal field extension.

\item   $f$ is the diagonal morphism $K\to K\times K$.

\item  $f$ is the canonical morphism $K\to D_K(K)=K[X]/(X^2)$.
\end{enumerate}

\end{lemma}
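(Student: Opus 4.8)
The plan is to dispatch the ``if'' direction by a dimension count, and to treat the ``only if'' direction by first making $A$ monogenic and finite over $K$, then applying minimality a second time to a cleverly chosen element.

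For the ``if'' direction, in each of (1)--(3) the $K$-algebra $A$ satisfies $\dim_K A = 2$; hence any $B \in [K,A]$ with $B \neq K$ has $\dim_K B = \dim_K A$, so $B = A$, and $K \subset A$ is minimal (in case (1) this is precisely the meaning of a minimal field extension). In particular $K \subset A$ is finite in all three cases.

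For the ``only if'' direction, I would suppose $K \subset A$ minimal and pick $t \in A \setminus K$. Since $K \subseteq K[t] \subseteq A$ with $K[t] \neq K$, minimality forces $A = K[t] \cong K[X]/(f)$ for some $f$ in the PID $K[X]$. If $f = 0$ then $A \cong K[X]$, contradicting minimality because $K \subset K[t^2] \subset K[t] = A$; hence $f \neq 0$, so $A$ is a finite $K$-algebra and $n := \dim_K A = \deg f \geq 2$ (as $A \neq K$). Thus $A$ is Artinian and I would distinguish two cases. If $A$ has a nontrivial idempotent $e$, then $e \notin K$ (a field has no nontrivial idempotent) and $K[e] = K + Ke \neq K$, so minimality gives $A = K[e]$; hence $\dim_K A = 2$ and $A \cong K \times K$ with structure map the diagonal, which is case (2). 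Otherwise $A$ is connected, hence local Artinian, and $A \cong K[X]/(p^e)$ with $p \in K[X]$ irreducible. If $e = 1$ then $A$ is a field, and since every $B \in [K,A]$ is a domain that is finite over the field $K$, hence a field, minimality of $K \subset A$ in $[K,A]$ is exactly minimality as a field extension, which is case (1). If $e \geq 2$, let $\mathfrak m$ be the maximal ideal of $A$ and choose $u \in \mathfrak m^{e-1} \setminus \{0\}$; then $u^2 \in \mathfrak m^{2(e-1)} \subseteq \mathfrak m^{e} = 0$, while $u \notin K$ because $u$ is a nonzero non-unit, so $K[u] = K + Ku \neq K$ and minimality gives $A = K[u] \cong K[X]/(X^2)$, which is case (3). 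In every case $K \subset A$ is finite.

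The delicate points lie in the ``only if'' direction: the first use of minimality yields $A = K[t]$ and, once the transcendental case $f = 0$ is excluded, finiteness; the second use --- applied to a nontrivial idempotent, or to a nonzero square-zero element produced inside the Artinian ring $A$ --- is what actually forces $\dim_K A = 2$ and hence the rigid trichotomy. Everything else is routine commutative algebra.
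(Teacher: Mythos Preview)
The paper does not supply its own proof of this lemma; it simply attributes the result to Ferrand--Olivier \cite[Lemme 1.2]{FO}. So there is no ``paper's proof'' to compare against, and your argument stands on its own. Your ``only if'' direction is clean and correct: reducing to $A=K[X]/(f)$, excluding the transcendental case, and then exploiting an idempotent or a square-zero element to force $\dim_K A=2$ is exactly the right idea.

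There is, however, a slip in your ``if'' direction. You assert that in each of (1)--(3) one has $\dim_K A=2$, but this is false for (1): a minimal field extension can have any prime degree (for instance $\mathbb{F}_p\subset\mathbb{F}_{p^q}$ for $q$ an odd prime), and in fact composite degree as well. Your parenthetical ``in case (1) this is precisely the meaning of a minimal field extension'' is gesturing at the right fix but does not quite land: a minimal \emph{field} extension means there is no proper intermediate \emph{field}, whereas you must rule out proper intermediate \emph{rings}. The missing step is to observe that any $B\in[K,A]$ is an integral domain (as a subring of a field), and that a minimal field extension is automatically algebraic and hence finite (if $t\in A\setminus K$ were transcendental, $K(t^2)$ would be a proper intermediate field); thus $B$ is a finite-dimensional $K$-domain, hence a field, and minimality as a field extension gives minimality as a ring extension. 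This also supplies the finiteness claim for case (1), which your dimension-$2$ argument does not cover. With this correction the proof is complete.
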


\begin{lemma}\label{1.3.1}

The  following statement hold:
 
\begin{enumerate}
 
\item  Minimal field extensions coincide with minimal ring extensions between fields. 
 
 \item  A minimal field extension is either separable or radicial. 
 
\item A radicial  field  extension $K\subset L$, with $\mathrm{c}(K) =p \in \mathbb{P}$, is minimal if and only if  $L= K[x] $ where $x^p \in K$, and if and only if $K\subset L$ is monogenic and radicial of height one. 

\end{enumerate}
\end{lemma}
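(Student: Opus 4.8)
The plan is to reduce everything to elementary field theory, using only the definitions. For (1), one inclusion is immediate: every intermediate field of $K\subset L$ is in particular an intermediate ring, so $[K,L]=\{K,L\}$ forces $K\subset L$ to have no proper intermediate field. For the converse, suppose $K\subset L$ is a minimal field extension. Choosing $t\in L\setminus K$ we have $L=K(t)$; if $t$ were transcendental over $K$ then $K(t^2)$ would be a proper intermediate field, a contradiction, so $t$ is algebraic and $L=K[t]$ is a finite extension of $K$. Now any ring $T$ with $K\subseteq T\subseteq L$ is an integral domain which is algebraic over the field $K$, hence is itself a field; thus the intermediate rings of $K\subset L$ are exactly its intermediate fields, and $[K,L]=\{K,L\}$.

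For (2), there is nothing to prove when $\mathrm{c}(K)=0$, since then every algebraic extension is separable. Assume $\mathrm{c}(K)=p\in\mathbb P$ and let $K_s$ be the separable closure of $K$ in $L$. This is an intermediate field, so $K_s\in\{K,L\}$ by minimality. If $K_s=L$ then $L/K$ is separable; if $K_s=K$ then $L/K$ is purely inseparable, since $L/K_s$ always is, i.e. $K\subset L$ is radicial.

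For (3), set $p:=\mathrm{c}(K)\in\mathbb P$ and keep in mind that $K\subset L$ is proper and radicial. The equivalence of ``$L=K[x]$ with $x^p\in K$'' and ``monogenic and radicial of height one'' is a direct Frobenius computation: if $L=K[x]$ with $x^p\in K$, then $\{1,x,\dots,x^{p-1}\}$ spans $L$ over $K$ and for $y=\sum a_ix^i$ with $a_i\in K$ one gets $y^p=\sum a_i^p(x^p)^i\in K$; the reverse implication is trivial. Next, if $K\subset L$ is minimal, then by (1) it is a minimal ring extension between fields, so $L=K[x]=K(x)$ for any $x\in L\setminus K$, and $x$ is purely inseparable over $K$ with minimal polynomial $X^{p^e}-a$, $a\in K$, where $p^e$ is least with $x^{p^e}\in K$. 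If $e\geq 2$ then $x^p\notin K$ while $(x^p)^{p^{e-1}}=a\in K$, so $K[x^p]$ has degree $p^{e-1}$ over $K$ and $K\subset K[x^p]\subset L$ with both inclusions proper, contradicting minimality; hence $e=1$ and $x^p=a\in K$. Finally, assume $L=K[x]$ with $x^p\in K$ and $x\notin K$. The minimal polynomial of $x$ over $K$ divides $X^p-x^p=(X-x)^p$ in $\overline K[X]$, so it equals $(X-x)^d$ with $d=[L:K]$; the coefficient $-dx$ of $X^{d-1}$ forces $d=p$, since $0<d<p$ would make $d$ a unit of $K$ and put $x\in K$. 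Thus $[L:K]=p$ is prime, $K\subset L$ has no proper intermediate field, and by (1) it is minimal.

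The only mildly technical ingredients are the shape $X^{p^e}-a$ of the minimal polynomial of a purely inseparable element (needed to exhibit the intermediate field $K[x^p]$ when $e\geq 2$) and the degree count $[L:K]=p$ in the last implication; both are standard, so I do not anticipate a genuine obstacle.
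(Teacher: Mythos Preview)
Your proof is correct and self-contained; each of the three parts is handled by standard field-theory arguments that go through without difficulty. The paper itself does not give a proof of this lemma at all: its entire ``proof'' is a bare citation to \cite[p.~371]{Pic 7}. Thus there is no real comparison of approaches to make---your elementary argument (reducing~(1) to the observation that any subring of an algebraic field extension is a field, handling~(2) via the separable closure $K_s$ inside $L$, and treating~(3) by the shape $X^{p^e}-a$ of purely inseparable minimal polynomials together with the coefficient trick $-dx\in K$) simply supplies what the paper delegates to an external reference, and in that sense is more informative than the original.
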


\begin{proof} \cite[p. 371]{Pic 7}.
\end{proof}

\begin{theorem}\label{1.4} \cite [Theorem 2.2]{DPP2} 
An extension $R\subset T$ is minimal and finite if and only if $M:= (R:T)\in   \mathrm{Max}(R)$ and one of the following three conditions holds:

\noindent (a) {\bf inert case}: $M\in\mathrm{Max}(T)$ and $R/M\to T/M$ is a minimal field extension.

\noindent (b) {\bf decomposed case}: There exist $M_1,M_2\in\mathrm{Max}(T)$ such that $M= M _1\cap M_2$ and the natural maps $R/M\to T/M_1$ and $R/M\to T/M_2$ are both isomorphisms.

\noindent (c) {\bf ramified case}: There exists $M'\in\mathrm{Max}(T)$ such that ${M'}^2 \subseteq M\subset M',\  [T/M:R/M]=2$, and the natural map $R/M\to T/M'$ is an isomorphism.
\end{theorem}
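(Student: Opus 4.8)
The plan is to prove both implications by reduction modulo the conductor $M$, so that in the forward direction one lands on the Ferrand--Olivier fundamental lemma (Lemma~\ref{1.3}) applied over the field $R/M$, and in the converse on elementary dimension counting over $R/M$. For the forward direction, assume $R\subset T$ is minimal and finite. A finite extension is integral, so $T/R$ is a finitely generated $R$-module with $\mathrm{Ann}_R(T/R)=C:=(R:T)$, whence $\mathrm{Supp}_R(T/R)=\mathrm V(C)$. Since a minimal extension has a crucial ideal \cite[Th\'eor\`eme 2.2]{FO}, this closed set is a single maximal ideal $M_0=\mathcal C(R,T)$, so $\sqrt C=M_0$ and in particular $C\subseteq M_0$. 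For the reverse inclusion I would form $B:=M_0T+R$; it is an $R$-subalgebra of $T$ (the only point to check being $M_0^2T\subseteq M_0T$), so $B\in\{R,T\}$ by minimality. The possibility $B=T$ reads $M_0(T/R)=T/R$, contradicting Nakayama applied to the nonzero finitely generated $R_{M_0}$-module $(T/R)_{M_0}$; hence $B=R$, that is $M_0T\subseteq R$, that is $M_0\subseteq C$. So $M:=C=M_0\in\mathrm{Max}(R)$. Now $M=(R:T)$ is an ideal of $T$, hence of every $B\in[R,T]$, so reduction modulo $M$ gives an inclusion-preserving bijection $[R,T]\to[R/M,T/M]$; in particular $R/M\subset T/M$ is a minimal extension of the field $k:=R/M$. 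Lemma~\ref{1.3} then gives three alternatives for $T/M$: a minimal field extension of $k$, the diagonal $k\to k\times k$, or $k\to k[X]/(X^2)$. Unwinding the quotient, the first says $M\in\mathrm{Max}(T)$ and $R/M\to T/M$ is a minimal field extension (inert case); in the second the two factors of $k\times k$ pull back to $M_1,M_2\in\mathrm{Max}(T)$ with $M_1\cap M_2=M$ and $R/M\to T/M_i$ isomorphisms (decomposed case); in the third the maximal ideal of $k[X]/(X^2)$ pulls back to $M'\in\mathrm{Max}(T)$ with ${M'}^2\subseteq M\subset M'$, $[T/M:R/M]=2$ and $R/M\cong T/M'$ (ramified case).

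For the converse, assume $M:=(R:T)\in\mathrm{Max}(R)$ and one of (a),(b),(c) holds. From $MT\subseteq R$ we get $M(T/R)=0$, so $T/R$ is a $k$-vector space, of dimension $[T/M:k]-1$ in case (a) (finite, since a minimal field extension is finite by Lemma~\ref{1.3}) and of dimension $1$ in cases (b) and (c); hence $T/R$, and therefore $T$, is a finite $R$-module, and $R\subset T$ is proper. Since $M$ is an ideal of $T$ and hence of every $B\in[R,T]$, reduction modulo $M$ again gives $[R,T]\cong[k,T/M]$, so it suffices to check that $k\subset T/M$ has no proper intermediate ring. In case (a), $T/M$ is a field and every intermediate ring is a finite-dimensional $k$-domain, hence an intermediate field, so there is none by minimality of $R/M\to T/M$. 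In case (b), the Chinese Remainder Theorem gives $T/M\cong T/M_1\times T/M_2\cong k\times k$ with $R/M\to T/M$ the diagonal, and a $k$-subalgebra of the $2$-dimensional $k$-algebra $k\times k$ has dimension $1$ or $2$, hence equals $k$ or $T/M$. In case (c), picking $x\in M'\setminus M$ one gets a $k$-basis $\{1,\bar x\}$ of $T/M$ with $\bar x^2\in({M'}/M)^2=0$, so $T/M\cong k[X]/(X^2)$, and again a $k$-subalgebra has dimension $1$ or $2$. In all three cases $[k,T/M]=\{k,T/M\}$, so $R\subset T$ is minimal, and it is finite by the first sentence of this paragraph.

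The step I expect to be the real obstacle is the exact equality $C=(R:T)=M_0$, rather than merely $\sqrt C=M_0$: this is precisely what makes the reduction to $R/M\subset T/M$ legitimate, and it is handled by the $M_0T+R$ subalgebra together with Nakayama. Everything after that is the bijection $[R,T]\cong[R/M,T/M]$ combined with Lemma~\ref{1.3} and linear algebra over the field $R/M$, and the converse requires no new ideas beyond the same reduction.
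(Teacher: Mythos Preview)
The paper does not give a proof of this theorem; it simply cites \cite[Theorem~2.2]{DPP2}. Your argument is correct and self-contained, and follows exactly the natural route one would expect the cited proof to take: first show that for a finite minimal extension the conductor $C=(R:T)$ is not merely radically equal to the crucial maximal ideal $M_0$ but actually equal to it (your $M_0T+R$ subalgebra plus Nakayama is the clean way to do this), then use the resulting bijection $[R,T]\cong[R/M,T/M]$ to reduce to Lemma~\ref{1.3} over the field $R/M$; the converse is the same bijection plus a dimension count. There is nothing to correct.
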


We give here a lemma  used in earlier papers and introduce FMC extensions. An extension $R\subset S$ is said to have FMC (for a ``finite maximal chain'' property) if there is a finite maximal chain of extensions going from $R$ to $S$. Minimal and FCP extensions have FMC.  

\begin{lemma} \label{1.5}  Let $R\subset S$ be an  extension and $T,U\in[R,S]$ such that $R\subset T$ is a finite minimal  extension and $R\subset U$ is a Pr\"ufer minimal extension. Then, $\mathcal{C}(R,T)\neq\mathcal{C}(R,U)$, so that $R$ is not a local ring.
\end{lemma}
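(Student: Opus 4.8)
The plan is to argue by contradiction: assuming $M:=\mathcal{C}(R,T)=\mathcal{C}(R,U)$, I will show this forces $T\subseteq U$, which is absurd. Granting this, the last assertion is immediate, since by Definition \ref{1.19} the crucial ideals $\mathcal{C}(R,T)$ and $\mathcal{C}(R,U)$ are maximal ideals of $R$, so they provide two distinct maximal ideals and $R$ is not local. First I record two structure facts. Since $R\subset T$ is finite and minimal, Theorem \ref{1.4} gives $M=(R:T)\in\mathrm{Max}(R)$; moreover $M$ is the largest ideal of $R$ that is also an ideal of $T$, so $MT=M$ and $(R:T)=\mathrm{Ann}_R(T/R)=M$. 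As $T$ is a finitely generated $R$-module, $T/R$ is a finitely generated $R$-module killed by $M$, hence a nonzero finite-dimensional $R/M$-vector space. On the other side, $R\to U$ is a flat epimorphism.

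Next I show that $MU=U$. For $P\in\mathrm{Spec}(R)\setminus\{M\}$ one has $U_P=R_P$ (crucial ideal $M$) and $M_P=R_P$ ($M$ is maximal), so $(MU)_P=M_PU_P=U_P$; while at $M$ the flat epimorphism $R_M\to U_M$ of the local ring $R_M$ is not an isomorphism (because $(U/R)_M\neq0$), hence is not faithfully flat, so $\mathrm{Spec}(U_M)\to\mathrm{Spec}(R_M)$ misses the closed point and $U_M/MU_M=0$. Therefore $U/MU=0$, i.e. $MU=U$.

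Now I identify the compositum $TU$. The base change $S\to S\otimes_RU$ of the epimorphism $R\to U$ is an epimorphism admitting the multiplication map $S\otimes_RU\to S$ as a retraction; a ring epimorphism with a retraction is an isomorphism, so $S\otimes_RU\xrightarrow{\sim}S$ canonically. Since $U$ is $R$-flat, $T\otimes_RU\hookrightarrow S\otimes_RU\cong S$ is injective with image the compositum $TU$, so $T\otimes_RU\cong TU$, and tensoring $0\to R\to T\to T/R\to0$ with the flat module $U$ yields, under this identification, the exact sequence $0\to U\to TU\to(T/R)\otimes_RU\to0$, whence $(T/R)\otimes_RU\cong TU/U$. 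For a finitely generated $R$-module $N$ and a flat $R$-algebra $U$ one has $\mathrm{Ann}_R(N)\otimes_RU\cong\mathrm{Ann}_R(N)\,U=\mathrm{Ann}_U(N\otimes_RU)$; taking $N=T/R$ and combining with the above gives $\mathrm{Ann}_U(TU/U)=MU=U$, so $TU/U=0$, that is $T\subseteq U$. Then $T\in[R,U]=\{R,U\}$ with $T\neq R$ forces $T=U$, so $T=U$ would be at once finite and a flat epimorphism, contradicting Theorem \ref{1.2}.

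The crux is the paragraph identifying $TU$ with $T\otimes_RU$: one must transport the conductor $(R:T)$ across the flat base change $R\to U$, and the naive claim that $T\otimes_RU$ sits inside $S$ has to be made precise via the epimorphism-with-retraction argument. Once $T\otimes_RU\cong TU$ and $MU=U$ are in hand, the annihilator computation closes the argument immediately, and no further structural analysis of the minimal extensions is needed.
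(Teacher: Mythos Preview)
Your proof is correct and arrives at exactly the same contradiction as the paper (namely $T\subseteq U$, hence $T=U$, violating Theorem~\ref{1.2}), but by a longer route. The paper's argument is a two-line ideal computation inside $S$: having $MT=M$ and $MU=U$, one gets
\[
UT=(MU)T=M(UT)=(MT)U=MU=U,
\]
so $T\subseteq U$ immediately. Your identification $TU\cong T\otimes_RU$ via the epimorphism-with-retraction trick on $S\to S\otimes_RU$, followed by the flat base-change of annihilators, is valid but unnecessary: since $T$ and $U$ already sit inside $S$, the product $UT$ can be manipulated directly without ever forming a tensor product. What your approach buys is a self-contained justification of $MU=U$ (via the failure of faithful flatness at the closed point), which the paper simply asserts as a known feature of Pr\"ufer minimal extensions; conversely, the paper's bare computation shows that no structural input beyond $MT=M$ and $MU=U$ is needed.
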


\begin{proof} Assume that $\mathcal{C}(R,T)=\mathcal{C}(R,U)$ and set $M:=\mathcal{C}(R,T)=(R:T)=\mathcal{C}(R,U)\in\mathrm{Max}(R)$. Then, $MT=M$ and  $MU=U$ because $R\subset U$ is a Pr\"ufer minimal extension. It follows that $MUT=UT=MTU=MU=U$, a contradiction.
\end{proof}

{\begin{lemma} \label{1.6}  Let $R\subset S$ be an FMC extension. If $M\in\mathrm{MSupp}(S/R)$, there exists $T\in[R,S]$ such that $R\subset T$ is minimal with $\mathcal{C}(R,T)=M$. 
\end{lemma}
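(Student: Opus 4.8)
The plan is to pick out, inside a fixed finite maximal chain, the minimal step ``responsible'' for $M$, push it down to the base $R$, and verify minimality locally. Concretely, fix a finite maximal chain $R=R_0\subset R_1\subset\cdots\subset R_n=S$ with each $R_{j-1}\subset R_j$ minimal, and write $M_j:=\mathcal C(R_{j-1},R_j)\in\mathrm{Max}(R_{j-1})$. Splicing the exact sequences $0\to R_j/R_{j-1}\to S/R_{j-1}\to S/R_j\to 0$ gives $\mathrm{Supp}_R(S/R)=\bigcup_{j=1}^{n}\mathrm{Supp}_R(R_j/R_{j-1})$; since $\mathrm{Supp}_{R_{j-1}}(R_j/R_{j-1})=\{M_j\}$, the standard base-change description of supports along $R\to R_{j-1}$ (as in the proof of Proposition~\ref{1.191}) yields $\mathrm{Supp}_R(R_j/R_{j-1})=\mathrm V_R(M_j\cap R)$. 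As $M\in\mathrm{MSupp}(S/R)$, there is a smallest index $i$ with $M_i\cap R\subseteq M$; then $(R_j/R_{j-1})_M=0$ for $j<i$, so $(R_{i-1})_M=R_M$, while $(R_i/R_{i-1})_M\neq 0$.

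Next I would localize the minimal extension $R_{i-1}\subset R_i$ at $M$: it becomes a non-trivial extension $R_M=(R_{i-1})_M\subsetneq (R_i)_M$, and a localization of a minimal extension is minimal or trivial, so $R_M\subset (R_i)_M$ is minimal; as $R_M$ is local, its crucial ideal is $MR_M$. By Theorem~\ref{1.2} this extension is monogenic, hence $(R_i)_M=R_M[x]$ for some $x\in R_i$ (clear the denominator of a generator). Put $T:=R[x]\in[R,S]$. Then $T_M=R_M[x]=(R_i)_M\supsetneq R_M$, so $R\subsetneq T$, $M\in\mathrm{MSupp}_R(T/R)$, and $[R_M,T_M]=\{R_M,T_M\}$.

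It then remains to arrange that $\mathrm{Supp}_R(T/R)=\{M\}$, equivalently that $x$ can be chosen in $R_P$ for every prime $P\neq M$. Off $\mathrm{Supp}_R(S/R)$ this is automatic, so only the finitely many components $\mathrm V_R(M_j\cap R)$ distinct from the one through $M$ have to be killed; I would do this by choosing the generator of $R_{i-1}\subset R_i$ according to its type in Theorem~\ref{1.4} and Lemma~\ref{1.3.1} (idempotent-like in the decomposed case, square-zero in the ramified case, residually primitive in the inert case, and generators that are already trivial away from $M$ for a Pr\"ufer minimal step), and then correcting it by an element of $R\setminus M$ lying in the conductors of the lower steps --- available since $M\notin\mathrm{Supp}_R(R_{i-1}/R)$ --- and in the annihilators attached to the remaining components, i.e. a finite patching. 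Granting $\mathrm{Supp}_R(T/R)=\{M\}$, minimality is formal: for $R\subseteq T'\subseteq T$ one has $T'_P=R_P=T_P$ for all $P\neq M$, while $T'_M\in[R_M,T_M]=\{R_M,T_M\}$, whence $T'=R$ or $T'=T$; and $\mathcal C(R,T)=M$ by construction.

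The step I expect to be the main obstacle is precisely this last globalization: a naive generator of $R_{i-1}\subset R_i$ gives, after adjunction to $R$, an extension whose support is all of $\mathrm V_R(M_i\cap R)\cup\mathrm{Supp}_R(R_{i-1}/R)$, far larger than $\{M\}$, so one genuinely has to exploit both the explicit shape of minimal extensions and the fact that the lower part of the chain misses $M$ (the canonical decomposition of Section~2 and Lemma~\ref{1.5} being the natural tools to organise the integral versus Pr\"ufer cases). Everything else --- the support computation and the behaviour of minimal extensions under localization --- is routine bookkeeping.
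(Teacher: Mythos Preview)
Your outline coincides with the paper's: fix a finite maximal chain, take the least index $k$ with $\mathcal C(R_k,R_{k+1})\cap R=M$, observe that then $M\in\mathrm{Max}(R)\setminus\mathrm{MSupp}_R(R_k/R)$ (equivalently $(R_k)_M=R_M$), and extract inside $R_{k+1}$ a minimal extension of $R$ with crucial ideal $M$. Your support computation and the reduction to the local minimal extension $R_M=(R_k)_M\subset (R_{k+1})_M$ are correct and reproduce exactly this setup.

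The step you yourself flag as the obstacle---globalising to some $T\in[R,R_{k+1}]$ with $R\subset T$ minimal and $\mathrm{Supp}_R(T/R)=\{M\}$---is precisely where the paper invokes \cite[Lemma~1.10]{Pic 3}: from $M\in\mathrm{Max}(R)\setminus\mathrm{MSupp}(R_k/R)$ and $R_k\subset R_{k+1}$ minimal with crucial ideal over $M$, that lemma produces $T\in[R,R_{k+1}]$ with $R\subset T$ minimal of the same type and $\mathcal C(R,T)=M$. So the paper does not perform the patching either; it outsources it. Your proposed fix (scale the generator by an element of $R\setminus M$ lying in the lower conductors, then kill the remaining components) is in the right spirit but is not a proof as written. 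Two concrete issues: first, if the lower chain contains Pr\"ufer minimal steps then $R_k/R$ need not be finitely generated over $R$, so there is no reason for $(R:R_k)\not\subseteq M$ even though $(R_k)_M=R_M$, and your conductor element may fail to exist; second, even in the purely integral case, your chosen $x$ lies in $R_i=R_{i-1}[g]$ rather than in $R_{i-1}$, so multiplying by an element of $(R:R_{i-1})$ lands you only in $R[g]$, whose support you have not controlled. Bridging this gap amounts to reproving the cited lemma, whose argument proceeds by a case analysis on the type of $R_k\subset R_{k+1}$ via Theorem~\ref{1.4}, building $T$ directly from the structural data (idempotent, nilpotent, residually primitive element) rather than by correcting a single generator after the fact.
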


\begin{proof} Let $\{R_i\}_{i=1}^n$ be a finite maximal chain such that $R_0:=R$ and $R_n:=S$. If $\mathcal{C}(R,R_1)=M$, then, $T=R_1$. So, assume that $M\neq \mathcal{C}(R,R_1)$. Let $k\in\{1,\ldots n-1\}$ be the least integer $i$ such that $M=\mathcal{C}(R_i,R_{i+1})\cap R$ \cite[Corollary 3.2]{DPP2}. For each $i<k$, we have $M\neq\mathcal{C}(R_i,R_{i+1})\cap R$, so that $M\in\mathrm{Max}(R)\setminus \mathrm{MSupp}(R_k/R)$. In view of \cite[Lemma 1.10]{Pic 3}, there exists $T\in[R,R_{k+1}]$ such that $R\subset T$ is minimal (of the same type as $R_k\subset R_{k+1}$) with $\mathcal{C}(R,T)=M$. 
\end{proof}

\subsection{The canonical decomposition of an integral extension}

\begin{definition}\label{7.90.0} An integral extension $R\subseteq S$ is called {\it infra-integral} \cite{Pic 2} (resp$.$; {\it subintegral} \cite{S}) if all its residual extensions  are isomorphisms (resp$.$; and the spectral map $\mathrm {Spec}(S)\to\mathrm{Spec}(R)$ is bijective). An extension $R\subseteq S$ is called {\it t-closed} (cf. \cite{Pic 2}) if the relations $b\in S,\ r\in R,\ b^2-rb\in R,\ b^3-rb^2\in R$ imply $b\in R$. The $t$-{\it closure} ${}_S^tR$ of $R$ in $S$ is the smallest element $B\in [R,S]$  such that $B\subseteq S$ is t-closed and the greatest element  $B'\in [R,S]$ such that $R\subseteq B'$ is infra-integral. An extension $R\subseteq S$ is called {\it seminormal} (cf. \cite{S}) if the relations $b\in S,\ b^2\in R,\ b^3\in R$ imply $b\in R$.  If $R\subseteq S$ is  seminormal, then $(R:S)$ is a radical ideal of $S$  and of $R$ (the proof goes back to Traverso). The {\it seminormalization} ${}_S^+R$ of $R$ in $S$ is the smallest element $B\in [R,S]$ such that $B\subseteq S$ is seminormal and the greatest  element $ B'\in[R,S]$ such that $R\subseteq B'$ is subintegral.

The canonical decomposition of an arbitrary ring extension $R\subset S$ is $R \subseteq {}_S^+R\subseteq {}_S^tR \subseteq \overline R \subseteq S$. 
\end{definition}

\begin{lemma}\label{7.90.1} Let $i: R\subset S$ be an integral extension such that ${}_S^+R={}_S^tR$ and $M:=(R:S) \in \mathrm{Max}(R)$. Set $T:={}_S^+R={}_S^tR$, then there is a unique maximal ideal $N:=\sqrt[S]{M} $ of $S$ lying over $M$, which is also the unique maximal ideal  of $T$ lying over $M$, so that $N=(T:S)$.
\end{lemma}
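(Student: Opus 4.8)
The plan is to exploit the canonical decomposition $R\subseteq T\subseteq S$, treating the two steps separately: the step $R\subseteq T$ delivers everything about $T$ at once, and the step $T\subseteq S$ is where the real argument sits. First I unpack the hypotheses. Since $T={}_S^+R={}_S^tR$, the definitions of seminormalization and $t$-closure say that $R\subseteq T$ is subintegral while $T\subseteq S$ is both seminormal and $t$-closed. Also $M=(R:S)$ is an ideal of $S$ (a conductor always is), so $\sqrt[S]{M}$ is meaningful; and since $M\in\mathrm{Max}(R)$, a prime of $T$ or of $S$ lies over $M$ precisely when it contains $M$.

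For $T$: as $R\subseteq T$ is subintegral, $\mathrm{Spec}(T)\to\mathrm{Spec}(R)$ is bijective, so $T$ has a single prime $N_T$ lying over $M$; it is the only prime of $T$ containing $M$, hence is the nilradical of $T/M$, i.e. $N_T=\sqrt[T]{M}$, it is maximal, and the residual map $R/M\to T/N_T$ is an isomorphism, so $K:=T/N_T\cong R/M$ is a field. Next I bring $\sqrt[S]{M}$ down into $T$. From $M=(R:S)\subseteq(T:S)$ and the fact that $(T:S)$ is a radical ideal of $S$ (because $T\subseteq S$ is seminormal) we get $\sqrt[S]{M}\subseteq(T:S)\subseteq T$; combined with $\sqrt[T]{M}\subseteq\sqrt[S]{M}$ this forces $N:=\sqrt[S]{M}=\sqrt[T]{M}=N_T$, a common ideal of $T$ and $S$ with $T/N=K$.

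The crux is that $S/N$ is a field. It is reduced (as $N$ is a radical ideal of $S$) and integral over the field $K=T/N$, hence zero-dimensional and reduced, i.e. von Neumann regular. Since $T\subseteq S$ is $t$-closed and $N$ is a common ideal of $T$ and $S$ contained in $T$, the quotient $K=T/N\subseteq S/N$ is again $t$-closed; plugging $b=e$, $r=1$ into the $t$-closedness relations shows that every idempotent $e\in S/N$ lies in $K$, hence equals $0$ or $1$, so $S/N$ has no nontrivial idempotents. A nonzero connected von Neumann regular ring is a field, so $S/N$ is a field, $N$ is maximal in $S$, and $\mathrm{V}_S(M)=\mathrm{V}_S(N)=\{N\}$; thus $N$ is the unique prime — a fortiori the unique maximal ideal — of $S$ lying over $M$, and since $N=N_T$ it is also the unique maximal ideal of $T$ lying over $M$. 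Finally $(T:S)$ contains $M$ and is radical in $S$, so $(T:S)\supseteq N$, while $(T:S)$, being a proper ideal of $S$ containing $M$ (one may assume $T\subsetneq S$), is contained in the only maximal ideal $N$ over $M$; hence $N=(T:S)$. The main obstacle is precisely the claim that $S/N$ is a field: the uniqueness of the prime over $M$ on the $T$-side is automatic, and transporting it across the seminormal, $t$-closed extension $T\subseteq S$ is what genuinely uses the hypothesis ${}_S^tR=T$, through the idempotent argument.
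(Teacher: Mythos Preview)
Your proof is correct and follows the same skeleton as the paper's: use subintegrality of $R\subseteq T$ to get a unique prime $N$ over $M$ in $T$, use seminormality of $T\subseteq S$ to see that $(T:S)$ is radical, and use $t$-closedness of $T\subseteq S$ to conclude that $S/N$ is a field. The differences are in order and in how the $t$-closed step is handled. The paper first proves $(T:S)=N$ (citing \cite[Lemma 4.8]{DPP2} for the conductor of a seminormal extension being an intersection of primes of $T$), then invokes \cite[Lemme 3.10]{Pic 1} as a black box to conclude that $N\in\mathrm{Max}(S)$, and only at the end identifies $N=\sqrt[S]{M}$ via $i^{-1}(\{M\})=\mathrm V_S(M)$. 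You instead pin down $N=\sqrt[S]{M}$ at the outset by squeezing $\sqrt[T]{M}\subseteq\sqrt[S]{M}\subseteq(T:S)\subseteq T$, and then replace the citation of \cite[Lemme 3.10]{Pic 1} by a direct argument: $S/N$ is reduced and integral over the field $T/N$, hence von Neumann regular, and the $t$-closedness relation with $r=1$ forces every idempotent into $T/N$, so $S/N$ is a connected von Neumann regular ring, i.e.\ a field. Your route is a bit more self-contained at that step; the paper's is shorter because it outsources exactly that point. Both tacitly need $T\subsetneq S$ for the equality $(T:S)=N$, which you note explicitly.
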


\begin{proof}  Since $R\subset T$ is subintegral, there is a unique maximal ideal $N$ of $T$ lying over $M$. Moreover, $M=(R:S)\subseteq (T:S)$. But $T\subseteq S$ is t-closed, and then seminormal. Then, $(T:S)$ is an intersection of prime ideals of $T$ by  \cite[Lemma 4.8]{DPP2}, which contain $M$, so that $(T:S)=N$.  Because $T/N\subset S/N$ is t-closed,  $N$ is a maximal ideal of $S$  \cite[Lemme 3.10]{Pic 1}, and the only maximal ideal  of $S$ lying over $M$, since it is equal to the only maximal ideal of $T$ lying over $M$. To get that $N= \sqrt[S]{M}$, observe that $i^{-1}(\{M\}) = \mathrm V_S(MS)$ and $MS =M$.
\end{proof}

We note  for further use that the classes of infra-integral   and subintegral  extensions are both stable under left or right divisions. The following proposition gives the link between the elements of the canonical decomposition and minimal extensions.

\begin{proposition}\label{7.90} \cite[Lemma 3.1]{Pic 4}  Let $R \subset S$ be an integral extension and  a tower of minimal extensions  $R=R_0\subset\cdots\subset R_i\subset\cdots\subset R _n= T $ in  $[R,S]$. Then, 

\begin{enumerate}
\item $R\subset T $ is subintegral if and only if each   $R_i\subset R_{i+1}$ is  ramified. 

\item $R\subset T$ is  infra-integral if and only if each  $R_i\subset R_{i+1}$ is either ramified or decomposed. 

\item $R\subset T$ is seminormal and infra-integral if and only if each  $R_i\subset R_{i+1}$ is decomposed. 

\item $R\subset T$ is seminormal  if and only if each  $R_i\subset R_{i+1}$ is either decomposed or t-closed. 

\item $R \subset T$ is t-closed if and only if  each $R_i\subset R_{i+1}$ is inert. 
\end{enumerate}
Moreover, if $R\subset S$ is subintegral, (resp.; infra-integral,  seminormal, t-closed), $R\subset T$ has the same property.
\end{proposition}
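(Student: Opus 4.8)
\emph{Strategy.} The whole statement reduces to two self-contained facts: a ground-level classification of which of the four properties each \emph{single} minimal integral extension enjoys, and closure of those properties under composition and under passage to a lower subextension. Given these, the five equivalences and the ``Moreover'' clause are pure bookkeeping, the only real work being the ``$\Rightarrow$'' direction of (3)--(5).

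\emph{Step 1 (the atomic picture).} Using Theorem \ref{1.4} (with Lemma \ref{1.3} for the residual field extensions) together with Definition \ref{7.90.0}, I would first settle, for a minimal integral extension $R\subset T$: in the \textbf{ramified} case the spectral map is bijective and all residual extensions are trivial, so $R\subset T$ is subintegral, hence infra-integral; but there is $b\in M'\setminus R$ with $b^2,b^3\in M=(R:T)\subseteq R$, so it is not seminormal, hence not t-closed. In the \textbf{decomposed} case all residual extensions are trivial while the spectral map is not injective, so $R\subset T$ is infra-integral but not subintegral; since $[R,T]=\{R,T\}$ and $R\subset T$ is not subintegral, ${}_T^+R=R$, i.e. it is seminormal (hence seminormal and infra-integral), but being nontrivially infra-integral it is not t-closed. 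In the \textbf{inert} case the residual extension at the crucial ideal is a nontrivial minimal field extension, so $R\subset T$ is not infra-integral; again $[R,T]=\{R,T\}$ forces ${}_T^tR=R$, so it is t-closed, hence seminormal. Summarising: a minimal integral extension is subintegral $\iff$ ramified; infra-integral $\iff$ ramified or decomposed; seminormal and infra-integral $\iff$ decomposed; t-closed $\iff$ inert; seminormal $\iff$ decomposed or inert.

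\emph{Step 2 (stability and the easy directions).} Each of the four properties is stable under composition and under passage to a lower subextension $R\subseteq R_i$: for infra-integral and subintegral this is the stability recorded just before the Proposition, and for seminormal and t-closed both are one-line element chases (e.g. if $R\subseteq A\subseteq C$ are both seminormal and $b\in C$ satisfies $b^2,b^3\in R$, then $b\in A$ and then $b\in R$; the t-closed case is the same chase with $b^2-rb$ and $b^3-rb^2$). Combined with Step 1, this yields the ``$\Leftarrow$'' directions of (1)--(5) at once — a tower of steps of the prescribed type composes to an extension with the prescribed property — and the ``Moreover'' clause, which is just lower-subextension stability. For ``$\Rightarrow$'' I would induct on $n$: $R\subseteq R_{n-1}$ inherits the property by lower-subextension stability, so by induction all steps below $R_{n-1}$ are of the prescribed type, and it remains to show the property passes to the top sub-quotient $R_{n-1}\subseteq T$. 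For infra-integral and subintegral this is again the stability noted before the Proposition — with the extra remark, in the subintegral case, that a decomposed step would destroy injectivity of the spectral map — so (1) and (2) are complete.

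\emph{Step 3 (the obstacle).} The substance lies in the ``$\Rightarrow$'' direction of (3), (4), (5): one must show that a ramified minimal step (resp. a non-inert minimal step) cannot occur anywhere inside an extension $R\subseteq T$ which is seminormal (resp. t-closed) — equivalently, that seminormalization and $t$-closure pass to top sub-quotients. Here I would localize at the crucial maximal ideal of the offending step (using Proposition \ref{1.191} and the fact that seminormalization and $t$-closure commute with localization) to reduce to $R$ local with maximal ideal equal to the relevant conductor, and then use Lemma \ref{7.90.1} together with the Traverso-style description of ${}_T^+R$ and ${}_T^tR$ to exhibit a proper subintegral (resp. infra-integral) $R$-subalgebra of $T$, contradicting seminormality (resp. t-closedness). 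This localization-and-gluing argument is where essentially all the weight of the proof sits; once it is in place, the five equivalences and the ``Moreover'' clause follow from Steps 1--2.
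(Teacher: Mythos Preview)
Your Steps~1--2 are correct and more explicit than the paper, which is extremely terse: it simply quotes \cite[Lemma~3.1]{Pic 4} for (2) and (5), declares (1) clear ``since we deal with a bijective spectral map'', and for (3)--(4) invokes the Traverso fact that the conductor of a seminormal extension is a radical ideal of $S$ (hence, in this FMC setting, a finite intersection of maximal ideals of $S$ and of each $R_{i+1}$, by an easy extension of \cite[Proposition~4.9]{DPP2}). That conductor argument is the paper's whole proof of the hard direction of (3)--(4); it never passes through upper-interval stability of seminormality or $t$-closedness.

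Your Step~3, by contrast, tries to prove exactly that upper-interval stability, and this is where there is a genuine gap. First, Lemma~\ref{7.90.1} is not relevant here: it concerns the special situation ${}_S^+R={}_S^tR$ with $(R:S)$ maximal, not an arbitrary finite tower, and it does not help produce a subintegral $R$-subalgebra from a ramified step occurring at level $i>0$. Second, and more substantively, you do not explain how a ramified step $R_i\subset R_{i+1}$ yields an element $b\in T\setminus R$ with $b^2,b^3\in R$ (as opposed to $b^2,b^3\in R_i$): the witness $b\in M'\setminus M_i$ with $b^2\in M_i$ has its square in $R_i$, not in $R$, so ``exhibit a proper subintegral $R$-subalgebra'' is a restatement of the goal rather than a method. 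What actually makes your route work is the Swan--Traverso description of ${}_T^+R$ (resp.\ ${}_T^tR$) via local conditions on residues, from which one reads off that ${}_T^+R=R$ forces ${}_T^+R_i=R_i$ for every $i$; once you state and use that explicitly, your Step~3 goes through cleanly and is a legitimate alternative to the paper's conductor-is-radical shortcut. As written, however, Step~3 is a promissory note rather than a proof, and the lemma you cite does not cash it.
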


\begin{proof} We can suppose that $T= S$. \cite[Lemma 3.1]{Pic 4} asserts  that (2) and (5) holds.
 Now (1) is clear since  we deal with a bijective  spectral map. If $R\subset S$ is seminormal, 
  $(R:S)$ is a finite intersection of  maximal ideals of $S$ (resp. $R_{i+1}$)  by an easy generalization of \cite[Proposition 4.9]{DPP2}, giving (3)  and (4). 
\end{proof}

\section{General properties on pointwise minimal  extensions}

\subsection{First results on pointwise minimal extensions}

The next proposition generalizes to arbitrary extensions some results of \cite{CDL} gotten in the integral domains context. The proofs need only slight changes. 

\begin{proposition}\label{7.2.1}An extension $R\subset S$ is pointwise minimal if and only if $R\subset S$ is $M$-crucial  and $R_M\subset S_M$ is  pointwise minimal.
\end{proposition}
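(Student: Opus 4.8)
The plan is to prove both implications by exploiting the behaviour of the functor $R \mapsto R[t]$ under localization, together with the characterization of the crucial ideal via support. For the forward direction, suppose $R \subset S$ is pointwise minimal. First I would establish that $R \subset S$ has a crucial ideal. Pick any $t \in S \setminus R$; since $R \subset R[t]$ is minimal, it has a crucial ideal $M_t := \mathcal{C}(R, R[t]) \in \mathrm{Max}(R)$ by \cite[Th\'eor\`eme 2.2]{FO}. The key step is to show $M_t$ is independent of $t$: if $t, t' \in S \setminus R$ had $M_t \neq M_{t'}$, then localizing at $M_t$ one gets $R_{M_t} = R[t']_{M_t}$, so $t' \in R_{M_t}$, while $R_{M_t} \subset R[t]_{M_t}$ is still minimal (localization of a minimal extension at its crucial maximal ideal). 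Then $R_{M_t} \subset R_{M_t}[t, t']$ would factor through two comparable proper steps or collapse, and by examining $R_{M_t}[t+t']$ or a similar single generator one derives a contradiction with pointwise minimality at the level of $S_{M_t}$. Once $M := M_t$ is well-defined, $\mathrm{Supp}_R(S/R) = \bigcup_{t \in S \setminus R} \mathrm{Supp}_R(R[t]/R) = \{M\}$, so $R \subset S$ is $M$-crucial. Localizing, each $R_M \subset R_M[t/1]$ is minimal (a minimal extension localizes to either a minimal extension or an equality, and here it cannot be an equality because $M$ is the crucial ideal), and every element of $S_M \setminus R_M$ is a unit multiple of some $t/1$ with $t \in S \setminus R$, so $R_M \subset S_M$ is pointwise minimal.

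For the converse, assume $R \subset S$ is $M$-crucial and $R_M \subset S_M$ is pointwise minimal. Fix $t \in S \setminus R$; I must show $R \subset R[t]$ is minimal. Consider the localization map $[R, R[t]] \to [R_M, R_M[t]]$. Since $R \subset S$ is $M$-crucial, $\mathrm{Supp}_R(R[t]/R) \subseteq \mathrm{Supp}_R(S/R) = \{M\}$, and $R[t]/R \neq 0$ forces $\mathrm{Supp}_R(R[t]/R) = \{M\}$; hence for any prime $P \neq M$ we have $R_P = R[t]_P$, which means the extension $R \subset R[t]$ is ``concentrated at $M$'' and $[R, R[t]] \to [R_M, R_M[t]]$ is a lattice isomorphism (an intermediate ring is recovered as the intersection of its localizations, all of which agree with $R$ away from $M$). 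Now $t/1 \in S_M \setminus R_M$ — indeed if $t/1 \in R_M$ then $t$ would be in $R_P$ for all $P$ in a neighbourhood, contradicting $M \in \mathrm{Supp}_R(R[t]/R)$ — so by pointwise minimality of $R_M \subset S_M$, the extension $R_M \subset R_M[t/1]$ is minimal, i.e. $[R_M, R_M[t]] = \{R_M, R_M[t]\}$. Transporting back through the isomorphism gives $[R, R[t]] = \{R, R[t]\}$, so $R \subset R[t]$ is minimal. Since $t$ was arbitrary, $R \subset S$ is pointwise minimal.

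The main obstacle is the first half of the forward direction, namely proving that the crucial ideal $M_t$ of $R \subset R[t]$ does not depend on the choice of $t$; this is what promotes a collection of pointwise minimality conditions into a genuine support condition. The cleanest route is probably to argue locally: for $t, t' \in S \setminus R$ with $M_t \neq M_{t'}$, localize at $M_t$, observe $t'/1 \in R_{M_t}$ while $R_{M_t} \subset R_{M_t}[t/1]$ is a nontrivial minimal extension, and then test pointwise minimality on the element $t/1 + c(t'/1)$ or directly on $t'/1 + t/1$: one shows $R_{M_t}[(t+t')/1]$ equals $R_{M_t}[t/1]$ (since $t'/1 \in R_{M_t}$), so this is fine, but one instead needs an element of $S_{M_t} \setminus R_{M_t}$ witnessing a second crucial ideal, which is impossible because $S_{M_t}$ is a local-base extension whose only relevant prime is $M_t R_{M_t}$. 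I would also need the supporting fact that a (finite or flat-epimorphic) minimal extension $R \subset R[t]$ localizes at its crucial ideal to a minimal extension — this follows from Theorem~\ref{1.4} and Lemma~\ref{1.3} by direct inspection of the three types, or from the general principle that $\mathrm{Supp}$ behaves well under localization. The rest is routine bookkeeping with the lattice isomorphism $[R, R[t]] \cong [R_M, R_M[t]]$, which is standard for extensions concentrated at a single maximal ideal.
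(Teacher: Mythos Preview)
Your converse direction is correct and matches the paper (which cites \cite[Proposition 4.6]{DPPS} for transporting minimality back from $R_M\subset R_M[x/1]$ to $R\subset R[x]$; your lattice-isomorphism argument is an equivalent formulation).

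The forward direction has a real gap precisely where you flag it. Localizing at $M_t$ cannot produce a contradiction: once $t'/1\in R_{M_t}$ you get $R_{M_t}[(t+t')/1]=R_{M_t}[t/1]$ and, as you yourself concede, ``this is fine''; invoking ``pointwise minimality at the level of $S_{M_t}$'' is circular, since that is part of the conclusion; and the observation that the local ring $R_{M_t}$ has only one maximal ideal is true but yields no contradiction with the global hypothesis. The missing idea, which is exactly the paper's argument, is to stay global and apply pointwise minimality of $R\subset S$ to $z:=x+y$. One checks $z\notin R[x]$ (else $y\in R[x]$, so $R[y]=R[x]$ by minimality, impossible since their crucial ideals differ) and symmetrically $z\notin R[y]$; hence $z\notin R$ and $R\subset R[z]$ is minimal with some crucial ideal $P$. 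Since $z/1\in R_Q$ for every $Q\notin\{M,N\}$, necessarily $P\in\{M,N\}$. If $P=M$ then $R_N=R_N[z/1]$, and combined with $R_N=R_N[x/1]$ (because $\mathcal C(R,R[x])=M\neq N$) this gives $y/1=z/1-x/1\in R_N$, contradicting $\mathcal C(R,R[y])=N$; the case $P=N$ is symmetric. Thus $M=N$, and $\mathrm{Supp}_R(S/R)=\{M\}$ follows. The point is that $x+y$ must be tested in $S$ itself, before any localization, so that its crucial ideal is forced to be one of $M,N$ while simultaneously detecting both.
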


\begin{proof}  We generalize  \cite[Theorem 4.5]{CDL}.
Assume that  $R\subset S$ is $M$-crucial and   $R_M\subset S_ M$ is  pointwise minimal. Then  $R_N=S_N$ for each $N \in \mathrm{Spec}(R)\setminus \{M\} $. For any $x\in S\setminus R$, we get that $R_M\subset R_M[x/1]$ is  minimal because $x/1\not\in R_M$. Then, $R\subset R[x]$ is  minimal  \cite[Proposition 4.6]{DPPS} and $R\subset S$ is  pointwise minimal.

Conversely, assume that $R\subset S$ is  pointwise minimal. Let $x,y\in S\setminus R$, so that $R\subset R[x]$ and $R\subset R[y]$ are minimal, with respective crucial ideals $M$ and $N$. Assume that $N\neq M$. For any $P\in\mathrm{Spec}(R)\setminus\{M,N\}$, we have $R_P=R_P[x/1]=R_P[y/1]$. Moreover, $R_M=R_M[y/1]$ and $R_N=R_ N[x/1]$. From $M\neq N$, we infer that $R[x]\neq R[y]$. Set $z:=x+y$. Then, $z\not\in R[x]\cup R[y]$. In particular, $z\not\in R$, so that $R\subset R[z]$ is   minimal. For any $P\in\mathrm{Spec}(R)\setminus\{M,N\}$,  $ R_P=R_P[x/1]=R_P[y/1]$ holds, so that $R_P=R_P[z/1]$. Therefore, the crucial  ideal of $R\subset R[z]$ is either $M$ or $N$. If it is $M$, then, $R_N=R _N[z/1]=R_N[x/1]$, which yields $y/1=z/1-x/1\in R_N$, a contradiction. The same contradiction appears if $N$ is the crucial ideal of $R\subset R[z]$. Then, $M=N$ is the crucial  ideal of any minimal extension $R\subset R[x]$. In particular, $\mathrm{Supp}(S/R)=\{M\}$.
 \end{proof}
 
\begin{proposition}\label{7.2} Let $R\subset S$ be a ring extension. Then, 
\begin{enumerate}

\item \cite[Theorem 4.6]{CDL}  An integral  pointwise minimal extension $R\subset S$ is    $(R:S)$-crucial.
\item \cite[Proposition 4.7]{CDL} If $R$ and $S$ share an ideal $I$, then $R\subset S$ is a pointwise minimal extension (resp., pair) if and only if $R/I\subset S/I$ is a pointwise minimal extension (resp., pair).

\item \cite[Proposition 4.2]{CDL} If $R\subset S$ is  a pointwise minimal extension (resp., pair) and $T\in[R,S]\setminus\{R\}$ (resp., $T\subset T'$ a subextension of $R\subset S$), then $R\subset T$ (resp., $T\subset T'$) is a pointwise minimal extension (resp., pair).
\end{enumerate}
\end{proposition}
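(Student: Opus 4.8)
The plan is to prove the three parts in order, each time reducing to material already available. For part (1), I would first recall that Proposition \ref{7.2.1} already tells us a pointwise minimal extension is $M$-crucial for some maximal ideal $M$; the work is to identify $M$ with $(R:S)$ when the extension is integral. By Proposition \ref{1.191}(2), an integral extension has a crucial ideal iff $\sqrt{C}\in\mathrm{Max}(R)$, in which case $\mathcal C(R,S)=\sqrt C$ where $C=(R:S)$. So it remains to show that for an integral pointwise minimal extension, $C$ is itself maximal (not merely radical-maximal). Here I would pick $x\in S\setminus R$; then $R\subset R[x]$ is minimal, hence finite and integral, so by Theorem \ref{1.4} its conductor $(R:R[x])$ is a maximal ideal, and by Proposition \ref{1.191}(1) applied to the $M$-crucial extension $R\subset R[x]$ we get $(R:R[x])\subseteq M$; combined with the fact (Proposition \ref{7.2.1}) that all these minimal extensions $R\subset R[x]$ share the same crucial ideal $M$, and that $C=(R:S)=\bigcap_x (R:R[x])$, one obtains $\sqrt C = M$ and then, using that each $(R:R[x])$ is $M$-primary (maximal, in fact), that $C$ is $M$-primary. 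To upgrade this to $C=M$ I would localize at $M$: $R_M\subset S_M$ is pointwise minimal with crucial ideal $MR_M$, and I would argue that in the local case the conductor is exactly the maximal ideal, by examining $MS_M$ — in each of the three minimal types (Theorem \ref{1.4}), $M$ equals the conductor, and the pointwise condition forces $(R:S)_M=MR_M$; then globalize. The main obstacle is precisely this last point: showing radical-maximal implies maximal for the conductor, i.e.\ that no strictly smaller $M$-primary conductor can occur; I expect to lean on the fact that every element of $S\setminus R$ generates a minimal (hence conductor-maximal) extension together with $C=\bigcap(R:R[x])$.

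For part (2), suppose $R$ and $S$ share an ideal $I$, so $I\subseteq (R:S)$ and $[R/I, S/I]$ is order-isomorphic to $\{T\in[R,S]\mid I\subseteq T\}=[R,S]$ (the last equality because $I\subseteq R$ already, so every $R$-subalgebra contains $I$). Under this lattice isomorphism, for $t\in S\setminus R$ the subalgebra $R[t]$ corresponds to $(R/I)[\bar t]$, and $R\subset R[t]$ is minimal iff $R/I\subset (R/I)[\bar t]$ is minimal; moreover $t\mapsto\bar t$ is a bijection $S\setminus R\to (S/I)\setminus(R/I)$. Hence $R\subset S$ is pointwise minimal iff $R/I\subset S/I$ is, and the same lattice isomorphism transports the condition defining a pointwise minimal pair (for each proper $T$ in the lattice, $T\subset S$ is pointwise minimal) from one side to the other. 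This part is essentially bookkeeping with the correspondence theorem for subalgebras and should present no difficulty.

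For part (3), the extension statement is almost immediate: if $R\subset S$ is pointwise minimal and $T\in[R,S]\setminus\{R\}$, then for any $t\in T\setminus R$ we have $t\in S\setminus R$, so $R\subset R[t]$ is minimal; thus every element of $T\setminus R$ generates a minimal extension of $R$, which is exactly the assertion that $R\subset T$ is pointwise minimal. For the pair statement, assume $R\subset S$ is a pointwise minimal pair and let $T\subseteq T'$ be a subextension with $T,T'\in[R,S]$. I must show $T\subset S$ is pointwise minimal when $T\neq S$ — but that is the definition of pointwise minimal pair applied to $T\in[R,S]\setminus\{S\}$ — and then, more to the point, that $T\subset T'$ is a pointwise minimal pair. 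For this I would take any $U\in[T,T']\setminus\{T'\}$ and show $U\subset T'$ is pointwise minimal; since $U\in[R,S]\setminus\{S\}$, the pair hypothesis gives $U\subset S$ pointwise minimal, and then part (3)'s extension case applied to $U\subset S$ with the intermediate algebra $T'$ (noting $T'\neq U$) gives $U\subset T'$ pointwise minimal. The only care needed is the edge case $T'=T$, which is trivial, and $U=T$, handled by the hypothesis directly. No serious obstacle arises here; it is a routine unwinding of the definitions once part (3)'s extension half is in hand.
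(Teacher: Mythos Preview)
Your arguments for parts (2) and (3) are correct and are essentially what the paper has in mind: the paper simply cites \cite{CDL} for the extension statements and declares the pair statements ``obvious,'' and your unwinding of the definitions via the lattice isomorphism $[R,S]\cong[R/I,S/I]$ and the restriction of the pair hypothesis to $[T,T']$ is exactly the routine verification this entails.

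For part (1), your core strategy is right but you manufacture an obstacle that is not there. You correctly note that for each $x\in S\setminus R$ the extension $R\subset R[x]$ is minimal integral, so by Theorem~\ref{1.4} its conductor $(R:R[x])$ is a \emph{maximal} ideal of $R$; and by Proposition~\ref{7.2.1} the crucial ideal of each $R\subset R[x]$ is the common $M$. But for a finite minimal extension the crucial ideal \emph{equals} the conductor: $\mathrm{Supp}_R(R[x]/R)=\mathrm V((R:R[x]))$ since $R[x]/R$ is finitely generated with annihilator $(R:R[x])$, and both ideals are maximal, so $(R:R[x])=M$ exactly---not merely $M$-primary. Since $(R:S)=\bigcap_{x\in S}(R:R[x])$ (the terms with $x\in R$ contribute $R$), you get $(R:S)=M$ in one line. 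The localization step and the ``radical-maximal implies maximal'' discussion are unnecessary; your argument actually concludes two sentences before you think it does.
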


\begin{proof}
 For (1) and the parts of (2) and (3) related to pointwise minimal extensions, use the proofs of \cite[Theorem 4.6, Propositions 4.7 and 4.2]{CDL}. The proofs of (2) and (3) related to pointwise minimal pairs are obvious. 
  \end{proof}
  
  It may be asked if the trichotomy:  inert, ramified, decomposed of  finite minimal extensions  is still true for finite pointwise minimal extensions, in that sense:  if some $R[t]$  verifies some of these properties, all of them verify the same property. 
 The answer is no (see  Example~\ref{7.19}(5)).

\begin{corollary}\label{7.30}A pointwise minimal integral extension has FCP if and only if $R\subset S$ is finite, and  if and only if $\dim_{R/(R:S)}(S/(R:S))<\infty$.
\end{corollary}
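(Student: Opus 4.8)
The plan is to prove the chain of equivalences
\[
R\subset S \text{ has FCP} \iff R\subset S \text{ is finite} \iff \dim_{R/(R:S)}(S/(R:S))<\infty.
\]
We work throughout under the standing hypothesis that $R\subset S$ is integral and pointwise minimal, and we set $M:=(R:S)$, which by Proposition~\ref{7.2}(1) is the crucial ideal and lies in $\mathrm{Max}(R)$. The implication ``finite $\Rightarrow$ FCP'' is immediate: a finite integral extension has FCP by the characterization of FCP extensions in \cite{DPP2} (indeed finitely generated as a module forces every chain in $[R,S]$ to be finite). For ``FCP $\Rightarrow$ finite'', note that an FCP extension has FMC, so by Lemma~\ref{1.6} applied at $M$ there is a minimal $R\subset T$ with $\mathcal C(R,T)=M$; more to the point, an integral FCP extension is finite, so I would simply cite the relevant result from \cite{DPP2} (FCP integral extensions are finite) rather than reprove it.

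The substantive equivalence is ``finite $\iff \dim_{R/M}(S/M)<\infty$''. One direction is formal: if $S$ is a finitely generated $R$-module, then $S/M=S/MS$ is a finitely generated $R/M$-module, i.e.\ a finite-dimensional vector space over the field $R/M$. For the converse, suppose $d:=\dim_{R/M}(S/M)<\infty$. Since $M=(R:S)$ and $MS=M$ (as $M\subseteq R$ and $M=(R:S)$ gives $MS\subseteq R$, forcing $MS=M$ by definition of the conductor), we have $S/M=S/MS$, and I claim a finite $R/M$-basis of $S/M$ lifts to a finite $R$-module generating set of $S$. Indeed, pick $s_1,\dots,s_d\in S$ whose images span $S/M$ over $R/M$; then $S=\sum_i Rs_i+MS=\sum_i Rs_i+M\subseteq \sum_i Rs_i+R$, and since $1\in R$ we get $S=\sum_{i=0}^{d} Rs_i$ with $s_0:=1$. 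Hence $S$ is finite over $R$.

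The one place needing a little care — and the step I expect to be the main obstacle — is making the equality $MS=M$ and the reduction ``FCP integral $\Rightarrow$ finite'' rest on results genuinely available at this point in the paper. For $MS=M$: since $M=(R:S)$ we have $MS\subseteq R$; but also $M\subseteq MS$ because $1\in S$; and $MS$ is an ideal of $S$ contained in $R$, hence contained in $(R:S)=M$; so $MS=M$. This is elementary and self-contained. For ``FCP $\Rightarrow$ finite'' in the integral case, the cleanest route is: an integral FCP extension is finite — this is part of the Dobbs–Picavet–Picavet characterization cited as \cite{DPP2} in the introduction, so it may be invoked directly. Alternatively, and staying closer to the tools of this section, one can argue that a pointwise minimal integral FCP extension $R\subset S$ has a finite maximal chain $R=R_0\subset\cdots\subset R_n=S$ of minimal extensions, each necessarily finite (Theorem~\ref{1.2} together with the fact that a pointwise minimal extension is integral here, so none of the steps is a flat epimorphism), whence $S$ is finite over $R$ by transitivity of finiteness; this also shows directly that $\dim_{R/M}(S/M)<\infty$, closing the triangle. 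I would present the module-theoretic argument for the second equivalence in full and dispatch the first equivalence by citation, flagging $MS=M$ explicitly since it is used twice.
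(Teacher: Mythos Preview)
Your approach is essentially the paper's: invoke Proposition~\ref{7.2}(1) to get $M=(R:S)\in\mathrm{Max}(R)$, cite \cite[Theorem~4.2]{DPP2} for the equivalence FCP $\Leftrightarrow$ finite, and treat the remaining equivalence as elementary (the paper simply calls it ``obvious'', whereas you spell out $MS=M$ and the basis-lifting, which is fine and correct).

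One correction, though: your parenthetical justification for ``finite $\Rightarrow$ FCP'' is wrong as stated. A finite integral extension need \emph{not} have FCP in general (e.g.\ $\mathbb{Z}\subset\mathbb{Z}[i]$ is module-finite but $[\,\mathbb{Z},\mathbb{Z}[i]\,]$ contains the infinite descending chain $\mathbb{Z}[2^ni]$). The characterization in \cite[Theorem~4.2]{DPP2} requires in addition that $R/(R:S)$ be Artinian; this holds here precisely because $M\in\mathrm{Max}(R)$ makes $R/M$ a field, and that is the point you should make explicit rather than the incorrect ``finitely generated forces every chain to be finite''.
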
  

\begin{proof} In view of Proposition  \ref{7.2}(2), $M =(R:S) $ is a maximal ideal of $R$, so that $R/M$ is a field and $R\subset S$ has FCP if and only if $R \subset S$ is finite  \cite[Theorem 4.2]{DPP2}. The last equivalence is obvious.
\end{proof}

\begin{proposition}\label{7.6} A pointwise minimal extension $R\subset S$ is either integrally closed or integral.  
A pointwise minimal extension  has FCP if and only if it is an FMC extension.
\end{proposition}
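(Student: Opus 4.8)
The statement has two parts, and I would handle them separately. For the first dichotomy --- a pointwise minimal extension $R\subset S$ is either integrally closed or integral --- the plan is to use the integral closure $\overline R$ of $R$ in $S$ as a test subalgebra. If $\overline R = R$, then $R\subset S$ is integrally closed and we are done; if $\overline R = S$, then $R\subset S$ is integral and we are done. So suppose toward a contradiction that $R\subsetneq \overline R \subsetneq S$. Pick $x\in \overline R\setminus R$ and $y\in S\setminus\overline R$; both lie in $S\setminus R$, so by pointwise minimality $R\subset R[x]$ and $R\subset R[y]$ are minimal. By Theorem~\ref{1.2}, each of these minimal extensions is either finite (hence integral) or a flat epimorphism; and $R\subset R[x]$ is integral (since $x$ is integral over $R$) while $R\subset R[y]$ cannot be integral, because $y\notin\overline R$ forces $y$ not integral over $R$, so $R[y]$ contains a non-integral element and $R\subset R[y]$ is not an integral extension --- hence it is a Pr\"ufer minimal extension. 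Now invoke Proposition~\ref{7.2.1}: the crucial ideal of the pointwise minimal extension is a single maximal ideal $M$, and it is the common crucial ideal of every $R\subset R[t]$, $t\in S\setminus R$. In particular $\mathcal C(R,R[x]) = M = \mathcal C(R,R[y])$, with $R\subset R[x]$ finite minimal and $R\subset R[y]$ Pr\"ufer minimal sharing the same crucial ideal --- this is exactly the situation forbidden by Lemma~\ref{1.5}, a contradiction. Hence no such intermediate $\overline R$ exists, proving the dichotomy. (When $R\subset S$ is integrally closed, one still must note it is then Pr\"ufer minimal in the sense claimed in the introduction, but that is the content of an earlier remark, not of this proposition.)

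For the second part --- a pointwise minimal extension has FCP if and only if it is an FMC extension --- one implication is immediate from the general fact recalled just before Lemma~\ref{1.5} that FCP extensions have FMC. For the converse, suppose $R\subset S$ is pointwise minimal and has FMC, say with a finite maximal chain $R = R_0\subset R_1\subset\cdots\subset R_n = S$ of minimal extensions. By the first part of this proposition, $R\subset S$ is either integrally closed or integral. In the integrally closed case $R\subset S$ is Pr\"ufer minimal, hence $[R,S] = \{R,S\}$ and FCP is trivial. In the integral case, I would argue that the length-$n$ maximal chain forces $S$ to be module-finite over $R$: by Proposition~\ref{7.2}(1) the extension is $(R:S)$-crucial with $M := (R:S)\in\mathrm{Max}(R)$, and by Proposition~\ref{7.2}(3) every $R\subset R_i$ is pointwise minimal with the same crucial ideal $M$, so the whole chain lives over the single maximal ideal $M$; localizing at $M$ (which changes nothing on $S/R$ by cruciality) reduces to an extension of the $R/M$-vector-space-like data, and a finite maximal chain of minimal steps over a field-residue base gives a finite $R/M$-dimension of $S/M$, whence $\dim_{R/(R:S)}(S/(R:S)) < \infty$. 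By Corollary~\ref{7.30} this is equivalent to FCP. So in both cases FMC implies FCP.

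The main obstacle I anticipate is the bookkeeping in the integral case of the second part: one must be careful that "FMC" as given only supplies \emph{one} finite maximal chain, and one must convert the finiteness of that chain's length into the finiteness of $\dim_{R/M}(S/M)$ rather than circularly assuming FCP. The cleanest route is probably to pass to $R/M\subset S/M$ via Proposition~\ref{7.2}(2) (legitimate since $M = (R:S)$ is a shared ideal), observe that pointwise minimality is inherited, and then note that over the \emph{field} $R/M$ a finite maximal tower of minimal extensions has, by Lemma~\ref{1.3}, each step finite of bounded degree, so the top is finite-dimensional --- at which point Corollary~\ref{7.30} closes the argument. I would also double-check the degenerate cases ($S = R$, or $R$ a field already) to make sure the equivalences hold vacuously there.
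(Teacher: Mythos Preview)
Your argument for the first part is essentially the paper's own: pick $x\in\overline R\setminus R$ and $y\in S\setminus\overline R$, observe that $R\subset R[x]$ is finite minimal while $R\subset R[y]$ is Pr\"ufer minimal, note via Proposition~\ref{7.2.1} that they share the crucial ideal $M$, and contradict Lemma~\ref{1.5}.

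For the second part the paper is much terser than you: it simply cites \cite[Theorem 4.2 and Theorem 6.3]{DPP2}, which handle the integral and integrally closed cases of the dichotomy without further comment. Your hands-on route is a reasonable alternative, but two points need repair. First, in the integrally closed case you invoke ``$R\subset S$ is Pr\"ufer minimal'' and call this ``an earlier remark''; in fact it is Proposition~\ref{7.7}, which comes \emph{after} the present proposition. Its proof does not rely on Proposition~\ref{7.6}, so there is no circularity, but it is a forward reference, not something already established. Second, in the integral case your appeal to Lemma~\ref{1.3} is misplaced: that lemma treats a minimal extension $K\subset A$ with $K$ a \emph{field}, whereas in your tower $R/M=R_0/M\subset\cdots\subset R_n/M=S/M$ only the bottom ring is a field. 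The correct tool is Theorem~\ref{1.2} (or Theorem~\ref{1.4}): each $R_i\subset R_{i+1}$ is minimal and integral (since $S$ is integral over $R$), hence module-finite, so $S$ is module-finite over $R$; then Corollary~\ref{7.30} yields FCP directly, and the detour through $R/M$ is unnecessary. With those fixes your argument is a valid self-contained alternative to the paper's bare citation.
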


\begin{proof} Assume that $R\subset S$ is neither integrally closed nor integral. Pick $x\in\overline R \setminus R$ and $y\in S\setminus\overline R$. The crucial ideal  $M$ of the extension is also the crucial ideal of $R\subset R[x]$ and $R\subset R[y]$. The first one is minimal integral and the second one is Pr\"ufer minimal  since $y\not\in\overline R$, which  contradicts  Lemma  \ref{1.5}. 
The last equivalence comes from   \cite[Theorem 4.2 and Theorem 6.3]{DPP2}. 
\end{proof} 

\begin{remark}\label{7.61} We can compare Proposition \ref{7.6} with \cite[Proposition 4.13]{CDL} of Cahen-Dobbs-Lucas. In their result, $R$ is a non-integrally closed local integral domain, $S$ is its quotient field and $R\subset \overline R$ is pointwise minimal. Then, each minimal overring of $R$ is contained in $\overline R$.
\end{remark}

The integrally closed case gives a simple result. 

\begin{proposition}\label{7.7} An integrally closed  extension $R\subset S$ is  pointwise minimal  if and only if it is  (Pr\"ufer) minimal.
\end{proposition}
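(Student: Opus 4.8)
The forward implication (Pr\"ufer minimal $\Rightarrow$ pointwise minimal) is essentially trivial: if $R\subset S$ is minimal, then $[R,S]=\{R,S\}$, so for every $t\in S\setminus R$ we have $R[t]=S$ and hence $R\subset R[t]$ is the minimal extension $R\subset S$. No integral-closedness hypothesis is needed for this direction. (And a minimal integrally closed extension is automatically Pr\"ufer minimal by Theorem \ref{1.2}, since it cannot be finite.)

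For the converse, assume $R\subset S$ is integrally closed and pointwise minimal. The plan is to show $[R,S]=\{R,S\}$ directly. By Proposition \ref{7.6}, a pointwise minimal extension is either integral or integrally closed; combined with the standing hypothesis that $R\subset S$ is integrally closed, and invoking the remarks in the introduction (``a pointwise minimal extension is either integral or integrally closed, in which case it is Pr\"ufer minimal''), the cleanest route is: by pointwise minimality, $R\subset S$ has a crucial ideal $M$, and $R_M\subset S_M$ is pointwise minimal by Proposition \ref{7.2.1}. Moreover $R\subset S$ integrally closed forces $R_M\subset S_M$ to be a flat epimorphism (integral closure localizes). So we reduce to the local case: $R$ local with maximal ideal $M$, $R\subset S$ a pointwise minimal flat epimorphism, and we must show it is minimal.

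In the local case, take any $t\in S\setminus R$. Then $R\subset R[t]$ is minimal by hypothesis, and since $R\subset S$ is a flat epimorphism, so is $R\subset R[t]$ (a subextension of a flat epimorphism is a flat epimorphism, cf.\ \cite[Scholium A]{Pic 5}), hence $R\subset R[t]$ is Pr\"ufer minimal. Its crucial ideal must be $M$ (the crucial ideal of the ambient extension, by Proposition \ref{7.2.1}). Now I claim $R[t]=S$: if not, pick $s\in S\setminus R[t]$; then $R[t]\subset R[t,s]$ is again minimal (Proposition \ref{7.2}(3)) and a flat epimorphism, so Pr\"ufer minimal, with crucial ideal lying over $M$. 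A Pr\"ufer minimal extension $R\subset R[t]$ satisfies $MR[t]=R[t]$, and then iterating $MS=S$; but then $S_M=S$ would be... — more carefully: since $R$ is local and $R\subset S$ is a flat epimorphism with crucial ideal $M$, the extension $R\subset S$ is actually minimal. The right way to see this is: a local Pr\"ufer extension $R\subset S$ with $R$ local has $[R,S]$ a chain, and here $S$ is generated over $R$ by the single element $t$ for \emph{every} $t\in S\setminus R$ modulo minimality — so if $R[t]\neq S$ for some $t$, then $R\subsetneq R[t]\subsetneq S$ gives a subalgebra strictly between, and applying pointwise minimality at $R[t]$...

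The main obstacle, and the step requiring care, is precisely ruling out a strictly intermediate ring $R[t]$ with $R\subsetneq R[t]\subsetneq S$ in the local flat-epimorphism case. I expect this to follow from the structure of Pr\"ufer minimal extensions together with Lemma \ref{1.5}-type conductor arguments, or more simply from the observation that for a flat epimorphism $R\subset S$ with crucial ideal $M$ and $R$ local, $S=R_M[\text{image of }S]$ and the minimality is inherited; alternatively one can cite that an integrally closed FCP-free pointwise minimal extension is handled by the general reduction of the paper. Once the local case yields minimality, descent back to the global case is immediate since $R_M\subset S_M$ minimal together with $R\subset S$ being $M$-crucial forces $[R,S]=\{R,S\}$: any $T\in[R,S]$ localizes to $R_M$ or $S_M$ at $M$ and to $R_P=S_P$ elsewhere, so $T=R$ or $T=S$.
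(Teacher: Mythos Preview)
Your proposal has a genuine gap at exactly the place you flag as ``the main obstacle'': you never establish that $R[t]=S$ in the local case. The step where you invoke Proposition~\ref{7.2}(3) to say ``$R[t]\subset R[t,s]$ is again minimal'' is a misreading: that proposition only tells you $R\subset T$ is pointwise minimal for $T\in[R,S]$; it says nothing about $T\subset T'$ being minimal unless the extension is a pointwise minimal \emph{pair}, which you have not assumed. The subsequent hand-waving about ``Lemma~\ref{1.5}-type conductor arguments'' or inherited minimality does not close the gap, and the claim that ``a subextension of a flat epimorphism is a flat epimorphism'' is not a general fact---it is precisely the content of $R\subset S$ being Pr\"ufer, which you have not yet proved.

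The paper's proof takes a completely different, and much shorter, route. The key observation is elementary: for $x\in S\setminus R$, the element $x$ is not integral over $R$ (since $R$ is integrally closed in $S$), so $x^2\notin R$; hence $R\subset R[x^2]\subseteq R[x]$ with $R\subset R[x]$ minimal forces $R[x^2]=R[x]$. This identity for all $x\in S$ is a known characterization of Pr\"ufer extensions \cite[Theorem~5.2, p.~47]{KZ}. Once $R\subset S$ is Pr\"ufer with $\mathrm{Supp}(S/R)=\{M\}$, it has FCP \cite[Proposition~1.3]{Pic 5}, hence FIP \cite[Theorem~6.3]{DPP2}, and then $\ell[R,S]=1$ by \cite[Proposition~6.12]{DPP2}. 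No localization or case analysis is needed.
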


\begin{proof} One implication is obvious. Now, assume that $R\subset S$ is  pointwise minimal.  For  $x\in S \setminus R$, $R\subset R[x]$ is minimal. But we have $R\subset R[x^2]\subseteq R[x]$, because $x$ is not integral over $R$; whence $R[x^2]= R[x]$. It follows  that $R\subset S$ is Pr\"ufer  \cite[Theorem 5.2, page 47]{KZ}. Therefore, $R\subset S$ has FCP  \cite[Proposition 1.3]{Pic 5} since $\mathrm{Supp}(S/R)=\{M\}$. We deduce from \cite[Theorem 6.3]{DPP2} that $R\subset S$ has FIP. To end, we get that $\ell[R,S]=1$, so  that $R\subset S$ is minimal by \cite[Proposition 6.12]{DPP2}.
\end{proof}

\begin{corollary}\label{7.8} A pointwise minimal extension is  either Pr\"ufer minimal or integral and these conditions are mutually exclusive.
 \end{corollary}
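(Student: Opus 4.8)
The plan is to derive Corollary~\ref{7.8} as an immediate consequence of the two preceding results, Proposition~\ref{7.6} and Proposition~\ref{7.7}. Let $R \subset S$ be a pointwise minimal extension. By Proposition~\ref{7.6}, $R \subset S$ is either integrally closed or integral; these two cases need to be combined with a Pr\"ufer-minimality statement in the integrally closed case and an exclusivity argument.

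First I would treat the integrally closed case: if $R \subset S$ is integrally closed (and pointwise minimal), then Proposition~\ref{7.7} says it is Pr\"ufer minimal. So in this branch $R \subset S$ is Pr\"ufer minimal. In the other branch, $R \subset S$ is integral. This establishes the dichotomy ``Pr\"ufer minimal or integral''.

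It then remains to check mutual exclusivity, i.e.\ that a pointwise minimal extension cannot be both Pr\"ufer minimal and integral. This is really a property of minimal extensions already recorded in Theorem~\ref{1.2} (a minimal extension is either finite or a flat epimorphism, and these are mutually exclusive): a Pr\"ufer minimal extension is a minimal flat epimorphism, hence not integral, since a nontrivial integral minimal extension is finite and a finite flat epimorphism would force $R = S$. Alternatively one can invoke Lemma~\ref{1.5} with $T = U = S$ to see that the crucial ideal cannot simultaneously be that of a finite minimal extension and that of a Pr\"ufer minimal extension. Either way, the two possibilities in the dichotomy are disjoint.

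I do not expect any genuine obstacle here; the statement is a packaging corollary, and the only point requiring a sentence of care is the exclusivity clause, for which citing Theorem~\ref{1.2} (applied to the minimal extension $R \subset R[t]$ for any $t \in S \setminus R$, noting that pointwise minimality propagates the integral-versus-flat-epimorphism alternative uniformly by the crucial ideal argument of Proposition~\ref{7.2.1}) is the cleanest route.
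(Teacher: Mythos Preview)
Your proposal is correct and follows essentially the same route as the paper: both argue that a non-integral pointwise minimal extension is integrally closed (Proposition~\ref{7.6}) and hence Pr\"ufer minimal (Proposition~\ref{7.7}). The paper's proof actually leaves the mutual-exclusivity clause implicit, so your explicit appeal to Theorem~\ref{1.2} is a welcome addition rather than a deviation.
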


\begin{proof} Let $R\subset S$ be a pointwise minimal extension which is not integral. By Proposition \ref{7.6}, $R\subset S$ is integrally closed. If $R\subset S$ is integrally closed, then $R\subset S$ is minimal Pr\"ufer. 
\end{proof}
In order to characterize pointwise minimal integral extensions (resp.; pairs),   next results  will be useful. Recall that  $ \{R_\alpha \mid \alpha \in I\}$  is the family of all  finite extensions 
$R \subset R_\alpha $ with $R_\alpha \in [R,S]$   and  conductor $C_\alpha$.

\begin{lemma}\label{7.9}Let  $R\subset S$  be an integral extension, with conductor $C$. The following statements hold: 

\begin{enumerate}

 \item If $R/C$ is Artinian (for example, if $C$ is  a maximal ideal), the extensions  $R \subset R_{\alpha}$  have FCP and  $S =\cup [R_\alpha\mid \alpha \in I]$.  

\item  If the supremum of the lengths $\ell[R,R_{\alpha}]$ is a finite integer $n$, then $R\subset S$ has FCP and $\ell[R,S]=n$.
In case each $R\subset R_{\alpha}$ is minimal, then, $R\subset S$ is itself minimal.

\item If  in addition, $R\subset S$ is  pointwise minimal,  then it is $(R:S)$-crucial,   as well as each $R \subset R_\alpha$. 

\end{enumerate} 
\end{lemma}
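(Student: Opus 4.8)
The plan is to prove the three items of Lemma~\ref{7.9} in order, exploiting that everything reduces to the finite subextensions $R\subset R_\alpha$ and that a union of a directed family of intermediate algebras is again an intermediate algebra.

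\medskip

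\textbf{Part (1).} First I would note that $[R,S]$ is directed: the compositum of two finite $R$-subalgebras is finite and lies in $\{R_\alpha\mid\alpha\in I\}$, and conversely every finitely generated $R$-subalgebra of $S$ is a finite $R$-module because $R\subset S$ is integral. Since $S$ is the union of its finitely generated $R$-subalgebras, $S=\bigcup[R_\alpha\mid\alpha\in I]$. For the FCP claim on each $R\subset R_\alpha$, I would argue that the conductor $C_\alpha$ of a finite extension contains $C$ (indeed $C\subseteq C_\alpha$ since $CR_\alpha\subseteq C S\subseteq R$ forces $C\subseteq(R:R_\alpha)=C_\alpha$), hence $R/C_\alpha$ is a quotient of $R/C$, hence Artinian when $R/C$ is. A finite extension $R\subset R_\alpha$ with $R/C_\alpha$ Artinian is an FCP extension by \cite[Theorem 4.2]{DPP2} (being finite over an Artinian-mod-conductor base), which is the desired conclusion.

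\medskip

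\textbf{Part (2).} Here I would use the directedness together with the length hypothesis. If $\sup_\alpha \ell[R,R_\alpha]=n<\infty$, then no chain in $[R,S]$ can have length exceeding $n$: any finite chain $R=T_0\subset\cdots\subset T_m$ generates a finite $R$-subalgebra $T_m$ contained in some $R_\alpha$ (using Part (1): each $T_j$ is finite, and their compositum is finite, hence some $R_\alpha$ contains $T_m$), so $m\le\ell[R,R_\alpha]\le n$. This gives FCP with $\ell[R,S]\le n$; the reverse inequality is immediate since each $R_\alpha$ is an intermediate algebra. When each $R\subset R_\alpha$ is minimal we have $n=1$, so $\ell[R,S]\le 1$; since $S\ne R$ (otherwise the family would be empty or trivial, and $R\subset S$ is a proper extension) this forces $[R,S]=\{R,S\}$, i.e.\ $R\subset S$ is minimal.

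\medskip

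\textbf{Part (3).} Now assume additionally that $R\subset S$ is pointwise minimal. By Proposition~\ref{7.2}(1) it is $(R:S)$-crucial; write $M:=(R:S)$, a maximal ideal. For each $\alpha$, Proposition~\ref{7.2}(3) says $R\subset R_\alpha$ is again pointwise minimal (it is in $[R,S]\setminus\{R\}$), and it is integral and finite, so Proposition~\ref{7.2}(1) applies to it as well, giving that $R\subset R_\alpha$ is $(R:R_\alpha)$-crucial, i.e.\ $\mathcal C(R,R_\alpha)=\sqrt{C_\alpha}\in\mathrm{Max}(R)$ by Proposition~\ref{1.191}(2). Finally I would identify this crucial ideal with $M$: since $\mathrm{Supp}_R(R_\alpha/R)\subseteq\mathrm{Supp}_R(S/R)=\{M\}$ and $R_\alpha\ne R$, we get $\mathrm{Supp}_R(R_\alpha/R)=\{M\}$, so $\mathcal C(R,R_\alpha)=M$ as claimed.

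\medskip

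I expect the main obstacle to be the bookkeeping in Part (1)/(2) around directedness and the passage ``$S=\bigcup R_\alpha$'', i.e.\ making fully precise that every finitely generated $R$-subalgebra of an integral extension is a finite $R$-module and that the family is closed under compositum; once that is in place, the FCP and length statements, and the crucial-ideal identifications in Part (3), are essentially formal given the cited results.
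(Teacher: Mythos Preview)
Your Parts (1) and (3) are correct and essentially coincide with (or expand on) the paper's treatment.

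Part (2) has a genuine gap. You claim that for an arbitrary chain $R=T_0\subset\cdots\subset T_m$ in $[R,S]$, ``each $T_j$ is finite'' and hence $T_m$ lies in some $R_\alpha$. But the $T_j$ are arbitrary intermediate rings; nothing in Part~(1) says they are finite $R$-modules, and in general they need not be. The repair is straightforward: choose $t_i\in T_i\setminus T_{i-1}$, set $R_\alpha:=R[t_1,\ldots,t_m]$ (finite over $R$ since integral and finitely generated), and note that $R=T_0\cap R_\alpha\subset T_1\cap R_\alpha\subset\cdots\subset T_m\cap R_\alpha=R_\alpha$ is a strict chain of length $m$ in $[R,R_\alpha]$, giving $m\le\ell[R,R_\alpha]\le n$.

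The paper bypasses this entirely with a more direct argument: since the supremum $n$ is an integer, it is attained at some $R_\beta$, and one shows $R_\beta=S$. Indeed, if $x\in S\setminus R_\beta$ then by directedness there is $R_\delta\in\mathcal F$ with $R_\beta\subsetneq R_\delta$; any chain of length $n$ in $[R,R_\beta]$ then extends to one of length $n+1$ in $[R,R_\delta]$, contradicting the supremum. Thus $S=R_\beta$ is itself a finite FCP extension of length $n$. This is slightly slicker: it proves $S$ is one of the $R_\alpha$'s outright, rather than bounding all chains.
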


\begin{proof} Since $R\subset S$ is integral, $S$ is the union  of the above upward directed family $\mathcal F:=\{R_{\alpha}\mid \alpha\in I\}$. 
 From $C \subseteq C_\alpha$,  we deduce that $R/C_\alpha$ is Artinian and then it is enough to use  \cite[Theorem 4.2]{DPP2}.

Assume that the supremum of the lengths $\ell[R,R_{\alpha}]$ is a finite integer $n$. Then, there clearly exists some $R_{\beta}$ such that $\ell[R,R_{\beta}]=n$.      Assume now that $R_{\beta}\neq S$, and let $x\in S\setminus R_{\beta}$. Then, $x$ is in some $R_{\gamma}$ and there exists some $R_{\delta}\in \mathcal F$ such that $R_{\beta},R_{\gamma}\subseteq R_{\delta}$ with $R_{\beta}\subset R_{\delta}$. This implies that $\ell[R,R_{\delta}]>n$, a contradiction. It follows that $S=R_{\beta}$, so that $R\subset S$ has FCP and $\ell[R,S]=n$. The last result is obvious.
\end{proof} 

 In Section 4, we  reduce our proofs to the case of fields.  The following is enlightening. Consider a field extension $K\subset L$, which is pointwise minimal. This  extension is necessarily algebraic, because  a flat epimorphism whose domain is a field is surjective \cite[Scholium A]{Pic 5}.  We will see later in Proposition \ref{7.13}  that $K\subset L$  is either minimal separable or radicial of height one.
Complexity of proofs relies heavily on this dichotomy.

\subsection{Co-pointwise minimal extensions}
  
  We are going to consider a property dual from the property of pointwise minimal extensions.

\begin{definition}\label{7.4} An extension $R\subset S$ is called a {\it co-pointwise minimal extension} if $R[x]\subset S$ is a minimal extension for each $x\in S\setminus R$. In particular, $R[x]$ is a co-atom for each $x\in S\setminus R$ (see Section 7). 
\end{definition}

We can remark that the above definition without ``$x\in S\setminus R$'' is  uninteresting  because ipso facto  this would mean that $R\subset S$ is minimal. 
The next proposition shows that  our definition of a co-pointwise minimal extension leads to a special case of pointwise minimal pairs. 

\begin{proposition}\label{7.5} Let $R\subset S$ be a ring extension. The following conditions are equivalent:
\begin{enumerate}
\item $R\subset S$ is a co-pointwise minimal extension;

\item  $R\subset S$ is a pointwise minimal pair such that $\ell[R,S]=2$;

\item   $R\subset S$ is a pointwise minimal pair and  the $R$-algebra  $S$   has a minimal system of generators whose cardinality is $2$. 
\end{enumerate}

In particular, a co-pointwise minimal extension has FCP.
\end{proposition}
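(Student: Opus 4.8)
The plan is to prove the cycle $(1)\Rightarrow(2)\Rightarrow(3)\Rightarrow(1)$ and to deduce the FCP assertion from $(1)\Leftrightarrow(2)$. Everything is bookkeeping inside the lattice $[R,S]$, organised around one recurring remark: if $T\subset U$ is pointwise minimal and $U=T[u]$ for some $u$, then $T\subset U$ is minimal (apply pointwise minimality to $u$, which necessarily lies outside $T$). I also use, straight from the definition, that a pointwise minimal pair $R\subset S$ makes $T\subset S$ pointwise minimal for every $T\in[R,S]\setminus\{S\}$.

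For $(1)\Rightarrow(2)$ I would first note that co-pointwise minimality forces $R[x]\neq S$ for each $x\in S\setminus R$, so $R\subset S$ is not minimal and every proper intermediate ring $T$ (that is, $R\subset T\subset S$) equals $R[x]$ for any $x\in T\setminus R$, with $R[x]\subset S$ minimal. This yields $\ell[R,S]=2$ (a chain of length $\geq 3$ would contain proper intermediate rings $T\subset T'\subset S$, contradicting minimality of $T\subset S$) and that $R\subset S$ is a pointwise minimal pair: for $R\subset T\subset S$, $T\subset S$ is minimal, hence pointwise minimal; and $R\subset R[t]$ is minimal for $t\in S\setminus R$ since a ring $U$ with $R\subset U\subset R[t]$ would be a proper intermediate ring (as $R[t]\neq S$) lying below $S$, against $U\subset S$ minimal. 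For $(2)\Rightarrow(3)$ only the two-generator statement remains: choosing $R\subset T\subset S$ and $a\in T\setminus R$, pointwise minimality and $\ell[R,S]=2$ force $R[a]=T$, and choosing $b\in S\setminus T$ they force $T[b]=S$, so $S=R[a,b]$; then $a\neq b$ and none of $\emptyset,\{a\},\{b\}$ generates $S$ (the only non-obvious case is $\{b\}$, since $R[b]=S$ would make $R\subset S$ minimal), so $\{a,b\}$ is a minimal system of generators of cardinality $2$.

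For $(3)\Rightarrow(1)$, fix a minimal generating set $\{a,b\}$ with $a\neq b$; minimality forces $a,b\notin R$ and $R[a],R[b]\neq S$, so $R\subset R[a]\subset S$ and $R\subset R[b]\subset S$. Pointwise minimality makes $R\subset R[a]$ and $R\subset R[b]$ minimal; since $R[a]\in[R,S]\setminus\{S\}$, $R[a]\subset S$ is pointwise minimal, and as $b\notin R[a]$ and $R[a][b]=S$ the recurring remark makes $R[a]\subset S$ minimal, and symmetrically $R[b]\subset S$ minimal. The decisive step is the identity $R[a]\cap R[b]=R$: this subring belongs to $[R,R[a]]=\{R,R[a]\}$, and the value $R[a]$ is excluded because it would give $a\in R[b]$, hence $R[b]=R[a,b]=S$. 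Granting this, take $x\in S\setminus R$; then $x\notin R[a]$ or $x\notin R[b]$, say $x\notin R[a]$, so $R[a]\subset R[a][x]\subseteq S$ with $R[a]\subset S$ minimal forces $R[a][x]=S$, i.e.\ $S=R[x][a]$; moreover $R[x]\neq S$ (else $R\subset R[x]=S$ would be minimal, against $R\subset R[a]\subset S$), hence $R[x]\subset S$ is pointwise minimal and, being monogenic, minimal. Thus $R\subset S$ is co-pointwise minimal. Finally, $(1)\Leftrightarrow(2)$ shows a co-pointwise minimal extension has $\ell[R,S]=2$, so every chain in $[R,S]$ is finite and the extension has FCP.

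The step where care is needed is $(3)\Rightarrow(2)$. The tempting shortcut $\ell[R,S]\leq\ell[R,R[a]]+\ell[R[a],S]=1+1=2$ is not legitimate, since length is not subadditive along a tower $R\subseteq R[a]\subseteq S$ of intermediate rings. The robust route is the one above: obtain $(1)$ directly from the two-generator hypothesis via $R[a]\cap R[b]=R$, and only afterwards read off $\ell[R,S]=2$ from $(1)\Rightarrow(2)$. I therefore expect establishing $R[a]\cap R[b]=R$ — i.e.\ recognising that the two-generator hypothesis is precisely what pins the length down — to be the substantive point.
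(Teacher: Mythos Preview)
Your proof is correct and follows the same cyclic scheme $(1)\Rightarrow(2)\Rightarrow(3)\Rightarrow(1)$ as the paper; the arguments for $(1)\Rightarrow(2)$ and $(2)\Rightarrow(3)$ are essentially identical to the paper's.

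The only noteworthy difference is in $(3)\Rightarrow(1)$. You establish the auxiliary fact $R[a]\cap R[b]=R$ and use it to reduce, by symmetry, to the case $x\notin R[a]$. The paper instead works asymmetrically with a single generator $x$ and does a direct case split: if $z\in R[x]$ then $R[z]=R[x]$ (by minimality of $R\subset R[x]$), so $R[z]\subset S$ is minimal; if $z\notin R[x]$ the argument is the one you give. The paper's route is marginally more economical since it avoids proving the intersection identity, but your symmetric formulation is equally valid and perhaps cleaner conceptually. Your closing remark that $R[a]\cap R[b]=R$ is ``the substantive point'' slightly overstates its role: as the paper shows, the case $x\in R[a]$ can be disposed of in one line without it.
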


\begin{proof}  
(1) $\Rightarrow$ (2) Assume that $R\subset S$ is  co-pointwise minimal. Let  $T\in[R,S]\setminus\{R,S\}$ and $x\in S\setminus T,\ y\in T\setminus R$. Consider the tower $R[y]\subseteq T\subset T[x]\subseteq S$. Since $R[y]\subset S$ is minimal, $R[y]=T$ and $T[x]=S$, so that $T\subset T[x]$ is minimal. If $T=R$, assume that $R\subset R[x]$ is not minimal, for some $x\not\in R$, so that there is $T'\in[R,R[x]]\setminus\{R,R[x]\}$. Let $y\in T'\setminus R$. Then $R[y]\subseteq T'\subset R[x]\subset S$ is absurd since $R[y]\subset S$ is minimal. Hence, $R\subset S$ is a pointwise minimal pair. Moreover, for any $x\in S\setminus R$, we get that $R\subset R[x]$ and $R[x]\subset S$ are minimal, giving $\ell[R,S]= 2$. In particular, a co-pointwise minimal extension has FCP.

(2) $\Rightarrow$ (3) Assume that $R\subset S$ is a pointwise minimal pair and $\ell[R,S]=2$. Then any maximal chain from $R$ to $S$ has length 2 and is of the form $R\subset T\subset S$, where $R\subset T$ and $T\subset S$ are minimal, whence monogenic. Pick $x,y\in S$ such that $T=R[x]$ and $S=T[y]$,  so that $S=R[x,y]$. Moreover, $S\neq R[x],R[y]$  and $R[x]\neq R[y]$,  since $R\subset S$ is not minimal. 

(3) $\Rightarrow$ (1) Assume now that $R\subset S$ is a pointwise minimal pair and $S=R[x,y]$ for some $x,y\in S$ such that $S\neq R[x],R[y]$ and $R[x]\neq R[y]$. Then $R\subset R[x]$  and $R[x]\subset R[x,y]=S$ are minimal. Let $z\in S\setminus R$. If $z\in R[x]$, then $R[z]=R[x]$ implies $R[z]\subset S$ is minimal. If $z\not\in R[x]$, then $R[x]\subset R[x,z]$ is minimal, giving $S=R[x,z]$, which implies that $R[z]\subset R[x,z]=S$ is minimal since $R\subset S$ is a pointwise minimal pair. Then, $R\subset S$ is  co-pointwise minimal.
\end{proof} 

In \cite{DS}Ê and \cite{D}, Dobbs and Shapiro studied  extensions of the form $R\subset T\subset S$, where $R\subset T$ and $T\subset S$ are minimal. Since a co-pointwise minimal extension has length 2, we may use their results. We will have more  details about the connection with these papers after  having characterized  co-pointwise minimal extensions in Section 5. 

Characterizations   for arbitrary  integral extensions  will  surprisingly lead  to  three special cases of the canonical decomposition.

\section{The case of an integral  extension over a field}

For an ideal $I$ of a ring and a positive integer $n$, we set $I^{[n]}:=\{x^n\mid x\in I\}$. 
In this section,  $k \subset S$ is an integral extension and $k$ is a field are  the riding hypotheses.  If $k\subset L$ is an algebraic field extension and $y\in L$, the minimal  polynomial of  $y$ over $k$ is denoted by $P_{k,y}(X)$.

\begin{proposition}\label{7.10} If   $k\subset S$ is  subintegral, then $S$ is a local ring with maximal ideal $N$.  Moreover, the following statements hold:
\begin{enumerate}
\item $k\subset S$ is  pointwise minimal  if and only if 
$N^{[2]}=0$.

\item  $k\subset S$ is a pointwise minimal pair if and only if $N^2=0$.
\end{enumerate}
\end{proposition}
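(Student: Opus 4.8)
The plan is to first pin down the structure of $S$, and then deduce (1) and (2).

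\textbf{Setup.} Since $k$ is a field, $\mathrm{Spec}(k)$ is a single point, so subintegrality of $k\subset S$ makes $\mathrm{Spec}(S)\to\mathrm{Spec}(k)$ a bijection; and since $S$ is integral over a field it is zero-dimensional, hence $S$ is local with maximal ideal $N$, and $N$ is the nilradical of $S$. Subintegrality also makes the residual extension $k\to S/N$ an isomorphism, so $S=k\cdot 1\oplus N$ as $k$-vector spaces, and every $x\in S$ is uniquely $x=c+n$ with $c\in k$, $n\in N$; here $x\notin k$ iff $n\neq 0$, and $k[x]=k[n]$. The point is then to decide minimality of $k\subset k[n]$ for a nonzero nilpotent $n$: if $m$ is its index of nilpotency, evaluation at $n$ gives $k[n]\cong k[X]/(X^m)$, of $k$-dimension $m$. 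A degree-two extension of a field is automatically minimal, whereas for $m\geq 3$ the subring $k+kn^{m-1}$ lies properly between $k$ and $k[n]$; hence $k\subset k[n]$ is minimal if and only if $m=2$, i.e.\ if and only if $n^2=0$.

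\textbf{Part (1) and the easy half of (2).} From the above, $k\subset S$ is pointwise minimal iff $k\subset k[n]$ is minimal for every $n\in N\setminus\{0\}$, iff $n^2=0$ for every $n\in N$, i.e.\ $N^{[2]}=0$; this proves (1). For the ``if'' part of (2), assume $N^2=0$ and fix $T\in[k,S]\setminus\{S\}$. Exactly as above, $T$ is local with maximal ideal $N_T=T\cap N$ and $T=k\cdot 1\oplus N_T$. For $x=c+n\in S\setminus T$ (so $n\in N\setminus N_T$) one has $T[x]=T[n]=T+Tn$ (as $n^2=0$), with $Tn=kn$ (since $N_T n\subseteq N^2=0$) and $Tn\cap T=0$; thus $T[n]/T\cong kn$ is a one-dimensional $k$-space annihilated by $N_T$, hence a simple $T$-module, so $T\subset T[n]$ is minimal. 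Therefore $T\subset S$ is pointwise minimal for every such $T$, i.e.\ $k\subset S$ is a pointwise minimal pair.

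\textbf{The hard half of (2), and the obstacle.} Conversely, a pointwise minimal pair is pointwise minimal, so $N^{[2]}=0$ by (1); suppose for contradiction that $N^2\neq 0$ and pick $a,b\in N$ with $ab\neq 0$, so $a^2=b^2=0$. The naive idea of expanding $(a+b)^2=0$ only yields $2ab=0$, which is useless in characteristic $2$; instead one uses the pointwise minimal pair hypothesis at the genuinely intermediate ring $T:=k[a]=k+ka$ (which is proper in $S$, since $b\in T$ would force $ab=0$). Then $b\in S\setminus T$, yet $T\subset T[b]$ is not minimal: indeed $T[b]=k+ka+kb+kab$ with $1,a,b,ab$ linearly independent over $k$ (project modulo $N$ to kill the constant term, then multiply any dependence relation by $a$ and by $b$), and $k+ka+kab$ is a subring --- the products $a\cdot a$, $a\cdot ab$, $ab\cdot ab$ all vanish --- lying strictly between $T$ and $T[b]$. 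This contradicts $k\subset S$ being a pointwise minimal pair, so $N^2=0$. The one genuinely delicate point of the whole argument is this last step: realizing that one must descend to the intermediate ring $k[a]$ rather than argue over $k$, and then checking the linear independence of $1,a,b,ab$ and the closure of $k+ka+kab$ under multiplication.
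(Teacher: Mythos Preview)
Your proof is correct and follows essentially the same line as the paper's: the setup, part (1), and the ``if'' direction of (2) are virtually identical. The only real variation is in the ``only if'' direction of (2): both arguments pass to the intermediate ring $T=k[a]$ and study $k[a]\subset k[a,b]$, but the paper invokes the conductor structure of a minimal ramified extension (Theorem~\ref{1.4}(c)) to get $ab\in ka$ and symmetrically $ab\in kb$, forcing $ab=0$ directly, while you instead argue by contradiction, exhibiting $k+ka+kab$ as a proper intermediate subring of $T[b]$. Your route is slightly more elementary (it avoids appealing to the classification of minimal extensions), at the cost of the small linear-independence check; the paper's route is a bit slicker once Theorem~\ref{1.4} is in hand.
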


\begin{proof} Since $k\subset S$ is subintegral, its  spectal map  is a homeomorphism  so that $S$ is a zero-dimensional local ring, with maximal ideal $N$. Then, $k\cong S/N$ gives  $S=k+N$  ($\star$).

(1) Assume that $k\subset S$ is  pointwise minimal  and let $x\in N\setminus\{0\}$ so that $x\not\in k$. Then, $k\subset k[x]$ is necessarily minimal ramified  because it is subintegral. Let $N'$ be the maximal ideal of $k[x]$, which implies that  $x\in N'=N\cap k[x]$. Moreover, $N'^2=0$, gives $x^2=0$. 

Conversely, assume that 
$N^{[2]}=0$.
 Then, $k\subset k[x]$ {\color{blue} is}  minimal ramified  when $x\in N\setminus\{0\}$  by Lemma \ref{1.3}. Let $y\in S\setminus k$, such that $y=a+x'$, for some $a\in k$ and $x'\in N\setminus\{0\}$, by ($\star$), whence  $k[y]=k[x']$ and $k\subset k[y]$ is  minimal. To conclude, $k\subset S$ is  pointwise minimal.

(2) Assume that $k\subset S$ is a pointwise minimal pair and let $x, y \in N$ be two different elements. Then $x^2=0=y^2$ by (1).  Since $k\subset k[x]$ is minimal ramified, $kx =N\cap k[x]$ is the maximal ideal of $k[x]$. If $y\in k[x]$, then $xy=0$, because $y\in kx$. If not, then $ k[x]\neq k[y]$ and $\{x, y\}$ is free over $k$. Moreover, $k[x]\subset k[x,y]$ is minimal ramified with conductor $kx$. Then, $xy\in kx$ gives that there is some $a\in k$ such that $xy=ax$. The same reasoning gives some $b\in k$ such that $xy=by$, so that $ax=by$ which implies $a=b=0=xy$. Then, $N^2= 0$.

Conversely, assume that $N^2=0$. Let $T\in[k,S]$ and $x\in S\setminus T$. Set $ N':=N\cap T$, which is the maximal ideal of $T$. As in (1), we can  assume that $x\in N$, so that $x^2=0\in N'$ and $T[x]=T+Tx$. Since $k\subset S$ is subintegral, so is $k\subset T$. In particular, $T=k+N'$. From $N^2=0$, we deduce $N'x\subseteq N^2=0\subseteq N'$, so that $T\subset T[x]$ is minimal ramified. 
\end{proof} 

\begin{lemma}\label{7.101} If 
$k\subset S$ is finite,  seminormal  and infra-integral,  there exists an integer $n$ such that $S\cong k^n$. 
\end{lemma}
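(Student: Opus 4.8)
The plan is to use the structure theory of minimal extensions together with Proposition~\ref{7.90}. Since $k \subset S$ is finite, seminormal, and infra-integral, by Proposition~\ref{7.90}(3) any maximal tower of minimal extensions $k = R_0 \subset R_1 \subset \cdots \subset R_n = S$ consists entirely of \emph{decomposed} minimal extensions. The base case $n = 0$ is trivial ($S = k = k^1$), so I would argue by induction on $n = \ell[k,S]$, which is finite because $k \subset S$ has FCP (indeed it is finite over a field).

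For the inductive step, suppose the result holds for all such extensions of length $< n$ and consider $k \subset R_{n-1} \subset S$ with $k \subset R_{n-1}$ decomposed at each step and $R_{n-1} \subset S$ a single decomposed minimal extension. By the induction hypothesis, $R_{n-1} \cong k^{m}$ for some integer $m$ (the extension $k \subset R_{n-1}$ is again finite, seminormal, and infra-integral by the last sentence of Proposition~\ref{7.90}, applied to the subtower). Now I would analyze the decomposed minimal extension $R_{n-1} \subset S$: writing $A := R_{n-1} \cong k^m$, by Theorem~\ref{1.4}(b) the crucial ideal $M = (A:S)$ is a maximal ideal of $A$ with $M = M_1 \cap M_2$ for two maximal ideals $M_1, M_2$ of $S$, and the natural maps $A/M \to S/M_i$ are isomorphisms. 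Since $A \cong k^m$, the maximal ideal $M$ corresponds to projecting off one coordinate, so $A/M \cong k$, hence $S/M_1 \cong S/M_2 \cong k$.

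The key point is then to show $S \cong k^{m+1}$. Because $A \cong k^m$ is a finite product of fields, the idempotent $e \in A$ that is the identity on the factor killed by $M$ and zero elsewhere still lives in $S$; the extension $A \subset S$ only modifies that one factor $k = Ae$, replacing it by its decomposed minimal extension, which by Lemma~\ref{1.3}(2) is $k \to k \times k$. Concretely, $S \cong A(1-e) \times (Se)$ where $A(1-e) \cong k^{m-1}$ and $k \cong Ae \subset Se$ is the diagonal $k \hookrightarrow k \times k$, so $Se \cong k^2$ and $S \cong k^{m-1} \times k^2 = k^{m+1}$. This completes the induction with $n = m+1$, so one can even read off that $\ell[k,S] = n$ iff $S \cong k^{n+1}$.

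The main obstacle is the bookkeeping in the inductive step: one must verify carefully that the idempotents of $R_{n-1} \cong k^m$ persist in $S$ and that the decomposed minimal extension $R_{n-1} \subset S$ genuinely splits off only one new copy of $k$ rather than interacting with several factors at once. This is where Theorem~\ref{1.4}(b) is essential --- it pins down that exactly two maximal ideals of $S$ lie over the crucial ideal $M$ with trivial residue extensions --- so that the ring $S$ localized away from $M$ is unchanged (isomorphic to $A$ away from $M$, i.e.\ $k^{m-1}$), while at $M$ the completion/localization picks up precisely the diagonal $k \to k \times k$. Alternatively, one could bypass the induction entirely: a finite seminormal infra-integral extension of a field is a finite reduced $k$-algebra that is \emph{seminormal as an extension}, forcing all residue fields to equal $k$ (infra-integrality) and forcing no nilpotents (reducedness from seminormality, since $(k:S)$ is radical), hence $S$ is a finite product of copies of $k$ by the structure theorem for finite reduced algebras over a field with trivial residue extensions; but the inductive argument via Proposition~\ref{7.90}(3) keeps everything within the framework already developed in the paper and also yields the precise value of $n$.
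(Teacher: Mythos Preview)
Your argument is correct, but it is genuinely different from the paper's. The paper's proof is a two-line appeal to external references: it quotes \cite[Lemma 5.6]{Pic 6} to conclude that $k\subset S$ is \'etale, and then \cite[Theorem 3.4]{Pic 6} to identify a finite \'etale $k$-algebra with $k^n$. Your inductive route via Proposition~\ref{7.90}(3), Theorem~\ref{1.4}(b) and Lemma~\ref{1.3}(2) stays entirely within the machinery already set up in the paper and, as you note, also delivers the exact value $S\cong k^{\,\ell[k,S]+1}$, which the paper only obtains later in Corollary~\ref{7.169}. The idempotent bookkeeping in your inductive step is sound: since $1-e\in M=(A:S)$ one gets $(1-e)S\subseteq A$, hence $S(1-e)=A(1-e)\cong k^{m-1}$, and then $Ae\cong k\subset Se$ is a decomposed minimal extension of a field, forcing $Se\cong k^2$ by Lemma~\ref{1.3}(2). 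Your alternative at the end is in fact the cleanest of all three: seminormality makes $(k:S)=0$ a radical ideal of $S$, so $S$ is reduced; being finite over $k$ it is a finite product of fields; infra-integrality forces each factor to equal $k$. That argument is shorter than both your induction and the paper's citation chain, and uses nothing beyond Definition~\ref{7.90.0}.
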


\begin{proof} $k\subset S$ is \' etale by \cite[Lemma 5.6]{Pic 6} and then $S\cong k^n$ for some integer $n$ by \cite[Theorem 3.4]{Pic 6}.
\end{proof} 

\begin{proposition}\label{7.11} If 
 $k\subset S$ is seminormal and infra-integral, the following statements hold: 
\begin{enumerate}
\item Assume that $|k|\neq 2$. Then, $k\subset S$ is a pointwise minimal pair if and only if $k\subset S$ is a pointwise minimal extension and,  if and only if $k\subset S$ is a  minimal extension.
\item Assume that $|k|= 2$. Then  $k\subset S$ is always a pointwise minimal extension,  and  is a pointwise minimal pair  
 if and only if  $S\cong k^n$ with $n\leq 3$ and, if and only if $\ell[k,S]\leq 2$.
\end{enumerate}
\end{proposition}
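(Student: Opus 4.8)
\emph{Strategy.} The plan is to turn everything into linear algebra over $k$. Since $S\neq k$ forces $(k:S)=\{0\}\in\mathrm{Max}(k)$, Lemma \ref{7.9}(1) gives $S=\bigcup_\alpha R_\alpha$, where the $R_\alpha$ are the finite subextensions of $k\subset S$; each $k\subset R_\alpha$ is again infra-integral (division of infra-integral extensions) and seminormal (if $b\in R_\alpha$ with $b^2,b^3\in k$, then $b\in k$ because $k\subset S$ is seminormal), so $R_\alpha\cong k^{n_\alpha}$ by Lemma \ref{7.101}. The same reasoning applies to each $k[t]$, $t\in S\setminus k$: it is a finite seminormal infra-integral extension of $k$, hence $k[t]\cong k^{d(t)}$, where $d(t)=\deg P_{k,t}$ equals the number of distinct components of $t$ in any $R_\alpha$ containing $t$; thus $2\le d(t)\le |k|$. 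Moreover $k\subset k[t]$ is minimal exactly when $d(t)=2$: if $d(t)\ge 3$, the subalgebra of $k^{d(t)}$ of tuples whose first two components agree is a proper intermediate ring, while $k\subset k^2$ is (decomposed) minimal. Finally, since $k\subset k^n$ is seminormal and infra-integral, Proposition \ref{7.90} shows that in any maximal chain from $k$ to $k^n$ each minimal step is decomposed, and a decomposed minimal extension of a product of copies of $k$ raises the $k$-dimension by exactly one; hence every maximal chain from $k$ to $k^n$ has length $n-1$, so $\ell[k,k^n]=n-1$ and the lattice $[k,k^n]$ is graded.

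\emph{Part (1).} Assume $|k|\ge 3$. The implications minimal $\Rightarrow$ pointwise minimal pair $\Rightarrow$ pointwise minimal extension are recorded in the Introduction, so it suffices to prove that a pointwise minimal $k\subset S$ is minimal. Pick $t\in S\setminus k$; then $d(t)=2$, i.e.\ $k[t]\cong k^2$. If $S\neq k[t]$, choose $s\in S\setminus k[t]$; then $B:=k[t,s]$ is a finite subextension with $k[t]\subsetneq B$, so $B\cong k^m$ with $m\ge 3$. Since $|k|\ge 3$, there is $u\in B$ whose components include $0$, $1$ and some $c\notin\{0,1\}$; then $d(u)\ge 3$, so $k\subset k[u]$ is not minimal even though $u\in S\setminus k$, contradicting pointwise minimality. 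Hence $S=k[t]\cong k^2$ is minimal, and the three notions coincide.

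\emph{Part (2).} Assume $|k|=2$. For $t\in S\setminus k$ one has $2\le d(t)\le |k|=2$, so $d(t)=2$ and $k\subset k[t]$ is minimal; thus $k\subset S$ is always pointwise minimal. Next, $\ell[k,S]\le 2$ forces FCP by Lemma \ref{7.9}(2), hence finiteness of $k\subset S$, hence $S\cong k^m$ by Lemma \ref{7.101} with $m-1=\ell[k,S]\le 2$; conversely $S\cong k^n$ with $n\le 3$ gives $\ell[k,S]=n-1\le 2$. It remains to match ``$S\cong k^n,\ n\le 3$'' with ``pointwise minimal pair''. If $n\le 3$, the proper subalgebras of $k^n$ are $k$ (for which $k\subset k^n$ is pointwise minimal, as above) and, when $n=3$, three subalgebras of $k$-dimension $2$; each such $T\cong k^2$ satisfies $\ell[T,k^3]=1$ (no subalgebra has dimension strictly between $2$ and $3$), so $T\subset k^3$ is minimal, hence pointwise minimal, and $k\subset S$ is a pointwise minimal pair. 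Conversely, if $S\cong k^n$ with $n\ge 4$, let $T\in[k,S]$ be the subalgebra of $n$-tuples constant on $\{1,2\}$ and on $\{3,\dots,n\}$, and let $u\in S$ be given by $u_i=1$ for $i\in\{2,4\}$ and $u_i=0$ otherwise; then $u\notin T$, while $T[u]$ is the subalgebra of tuples constant on $\{1\},\{2\},\{4\},\{3,5,\dots,n\}$, so $T[u]\cong k^4$ and $\ell[T,T[u]]=2$. Thus $T\subset S$ is not pointwise minimal, and Proposition \ref{7.2}(3) shows that $k\subset S$ is not a pointwise minimal pair. Finally, if $S$ is infinite, then $\sup_\alpha\ell[k,R_\alpha]=\infty$ (otherwise Lemma \ref{7.9}(2) would force $k\subset S$ finite), so some $R_\alpha\cong k^{n_\alpha}$ has $n_\alpha\ge 4$, and Proposition \ref{7.2}(3) again shows $k\subset S$ is not a pointwise minimal pair.

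\emph{Main obstacle.} The delicate points are, in (1), producing an element of $k[t,s]$ with ``spread'' at least $3$, which is exactly where $|k|\neq 2$ is used; and, in (2), the combinatorics of $[k,k^n]$ for $|k|=2$: one must verify that \emph{every} proper subalgebra of $k^3$ yields a pointwise minimal subextension (a short but genuine case check, not a slogan), and then pin down a single subalgebra-and-element pair that breaks pointwise minimality for $k^4$ and transfers the failure, via Proposition \ref{7.2}(3), to all larger finite and to infinite $S$. Everything else is the bookkeeping set up in the first paragraph.
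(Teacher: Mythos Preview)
Your proof is correct and follows essentially the same approach as the paper's: both reduce to finite subextensions $R_\alpha\cong k^{n_\alpha}$ via Lemma~\ref{7.101}, exploit in (1) that $|k|\ge 3$ permits an element of $k^m$ ($m\ge 3$) with at least three distinct components (the paper packages this as the computation $x^2=a+bx\Rightarrow\lambda^2=\lambda$ for $x=e+\lambda f$, which is the same observation), use in (2) that every element of $\mathbb{F}_2^n$ is idempotent, and break the pair property for $n\ge 4$ by exhibiting a specific $2$-dimensional $T$ and an element $u$ with $\dim_k T[u]=4$. Your framing through $d(t)$ and the graded partition lattice $[k,k^n]$ streamlines the bookkeeping, but the mathematical content coincides with the paper's argument.
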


\begin{proof} 
(1) We have the following implications: $k\subset S$ is   minimal  
$\Rightarrow k\subset S$ is a pointwise minimal pair $\Rightarrow k\subset S$ is  pointwise minimal.

Now, assume that $k\subset S$ is  pointwise minimal   and $|k|\neq 2$. By Lemma \ref{7.9}, $S$ is the union  of an upward directed family $\mathcal F$ of FCP extensions $R_{\alpha}$. For each $\alpha$, there exists an integer $n_{\alpha}\neq 0,1$ such that $R_{\alpha}\cong k^{n_{\alpha}}$, since $k\subset R_ {\alpha}$ is a finite seminormal infra-integral  extension by Lemma \ref{7.101}. We are going to show that $n_{\alpha}=2$. Deny. Let $e$ and $f$ be two elements of the standard basis of the $k$-vector space $ k^{n_ {\alpha}}$, so that $e^2=e,\ f^2=f$ and $ef=0$. It follows that $\{1,e,f\}$ is free because $n_\alpha > 2$. Let $\lambda\neq 0,1$ in $k$ and set $x=e+\lambda f$. Then,  $k\subset k[x]$ is minimal decomposed by Proposition \ref{7.90} since $x\not\in k$. It follows that $\{1,x\}$ is a basis of $k[x]$ over $k$ since $\dim_k(k[x])=2$.  Then, there exist $a,b\in k$ such that $x^2=a+bx$, giving $a+b(e+\lambda f)=e+\lambda^2f$, so that $\lambda^2=\lambda$, a contradiction since $\lambda\neq 0,1$. To conclude, we have $n_{\alpha}=2$ and then  $k\subset R_{\alpha}$ is minimal and  $k\subset S$ is minimal by Lemma  \ref{7.9}.
 
(2) Assume that $|k|= 2$ and let $x\in S\setminus k$. Since $k\subset S$ is seminormal infra-integral and $x$ is integral over $k$, we get that $k\subset k[x]$ has FCP  \cite[Theorem 4.2]{DPP2} and $k[x]\cong k^n$, for some integer $n$ by Lemma \ref{7.101}, from which we infer that $x=(x_i)_{i=1}^n$, where $x_i\in\{0,1\}$ for each $i$. Then, $x^2=x$ gives that $k\subset k[x]$ is minimal decomposed by Lemma \ref{1.3}, so that $ k\subset S$ is a pointwise minimal extension. 

Assume that $k\subset S$ is also a pointwise minimal pair. In view of Lemma  \ref{7.9}, $S$ is the union of an upward directed family $\mathcal F$ of FCP integral extensions $R_{\delta}$ such that $R_{\delta}\cong k^{n_{\delta}}$ for some integer $n_{\delta}$, for each $\delta$ (see (1)). Assume that there exists some $\delta$ such that $n_{\delta}\geq 4$, and let $e_{\alpha},\ e_{\beta}$ and $e_  {\gamma}$ be distinct elements of the standard basis of the $k$-vector space $R_ {\delta}$. Set $e:=e_{\alpha}+e_{\beta},\ f:=1+e$ and $x:=e_{\alpha}+e_{\gamma}$. Let $T:=k[e]=k+ke$, so that $x\not\in T$. Then, $k\subset T$ is minimal decomposed and $\mathrm{Max}(T)=\{ke,kf\}$ because $e^2=e$. 
Although $x^2=x$ and $T\subset T[x]$ is seminormal infra-integral, $T\subset T[x]$ is not minimal, because $ex=e_ {\alpha}\not\in T$ and $fx=e_{\gamma}\not\in T$, 
a contradiction since $k\subset S$ is  a pointwise minimal pair. Hence, $n_ {\delta}\leq 3$ for each $R_{\delta}\in\mathcal F$. Set $n:=\sup\{n_{\delta}\mid R_ {\delta}\in\mathcal F\}$. There is some $R_{\delta}\in\mathcal F$ with $n=n _{\delta}$. For $R_i\in\mathcal F$, there is $R_j\in\mathcal F$ such that $R_i,R _{\delta}\subseteq R_j$. Therefore,  $R_{\delta}=R_j$ and $R_i\subseteq R_ {\delta}$ for each $R_i\in\mathcal F$, giving $S=R_{\delta}\cong k^n$, with $n\leq 3$. 

Conversely, assume that $S\cong k^n$, with $n\leq 3$. If $n=2$, then $S\cong k^2$ and $k\subset S$ is minimal and so a pointwise minimal pair. If $n=3$, then $S\cong k^3$ and $\ell[k,S]=2$, so that $k\subset S$ is a pointwise minimal pair. The last equivalence is then obvious.
 \end{proof}

\begin{proposition}\label{7.13} \cite[Proposition 4.16]{CDL} If  
 $k\subset S$ is t-closed, then $S$ is a field and the following statements are equivalent:

\begin{enumerate}

\item  $k\subset S$ is a pointwise minimal extension;

\item $k\subset S$ is a pointwise minimal pair; 

\item    $k\subset S$ is either a minimal separable field extension, or   a height one radicial extension. 
\end{enumerate}
\end{proposition}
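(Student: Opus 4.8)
The plan is to first reduce everything to showing that $S$ must be a field, since once that is established the equivalences follow from known results on minimal field extensions together with Lemma \ref{1.3.1}. So the first step is: assume $k \subset S$ is t-closed with $k$ a field, and show $S$ is a field. Since $k \subset S$ is integral and $k$ is a field, $S$ is zero-dimensional and every prime of $S$ is maximal; if $S$ were not a field it would have at least two maximal ideals or a nonzero nilpotent. Using Proposition \ref{7.90} (item (5)), a tower of minimal extensions realizing an FCP subextension of $k \subset S$ must consist of inert steps; but to invoke FCP I first pass to a finite subextension $k \subset k[x]$ for $x \in S \setminus k$, which has FCP by Lemma \ref{7.9}(1) (the conductor is the maximal ideal $0$ of the field $k$, so $R/C = k$ is Artinian). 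Then $k \subset k[x]$ decomposes into inert minimal extensions, hence each intermediate ring is a field (inert minimal extensions of fields are field extensions by Theorem \ref{1.4}(a) / Lemma \ref{1.3}(1)), so $k[x]$ is a field. Since $S$ is the directed union of such $k[x]$, $S$ is a field.

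Next, with $S$ now a field, I prove the cycle (1) $\Rightarrow$ (3) $\Rightarrow$ (2) $\Rightarrow$ (1), with (2) $\Rightarrow$ (1) being trivial (restrict the pointwise minimal pair property to $T = k$). For (3) $\Rightarrow$ (2): if $k \subset S$ is minimal separable, it is minimal hence a pointwise minimal pair. If $k \subset S$ is radicial of height one, then for any $T \in [k,S] \setminus \{S\}$ and any $x \in S \setminus T$, we have $x^p \in k \subseteq T$ where $p = \mathrm{c}(k)$, so $P_{T,x}(X) = X^p - x^p$... wait, this is degree $p$ in general, not degree a prime power giving minimality directly; the point is rather that $T[x]/T$ is monogenic and radicial of height one, and $[T[x]:T]$ must be a prime (it divides the degree of the minimal polynomial which is a power of $p$, and by Lemma \ref{1.3.1}(3) monogenic radicial of height one over a field is minimal — actually I should check $[T[x]:T] = p$ forces it, using that the minimal polynomial of $x$ over $T$ divides $X^p - x^p = (X - x)^p$ so equals $(X-x)^{p^j}$ for some $j$, but it must have coefficients in $T$ and be of degree $p$... here I need that height-one radicial of height one monogenic is minimal, which is exactly Lemma \ref{1.3.1}(3)). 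So $T \subset T[x]$ is minimal for every such $x$, i.e. $T \subset S$ is pointwise minimal; this holds for all $T \ne S$, giving a pointwise minimal pair.

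For the main implication (1) $\Rightarrow$ (3): assume $k \subset S$ is pointwise minimal, $S$ a field. By Lemma \ref{1.3.1}(2), for each $x \in S \setminus k$ the minimal extension $k \subset k[x]$ is either separable or radicial. First case analysis: if some $k \subset k[x]$ is separable and another $k \subset k[y]$ is radicial (so $\mathrm{c}(k) = p \in \mathbb{P}$ and $y^p \in k$), then consider $z = x + y$ — one shows $z \notin k$ and, by a characteristic/degree argument (the subfield generated by a separable and an inseparable element), $k \subset k[z]$ fails to be minimal, a contradiction; so all the $k[x]$ are of one uniform type. If all are separable: then for $x \in S\setminus k$, $k \subset k[x]$ is minimal separable, and I must rule out $[S:k] > $ the minimal degree — using that $S$ is the directed union of the $k[x]$ and that a pointwise minimal separable field extension is minimal (if $[S:k] > p$ for the relevant prime, pick $x, y$ generating incomparable minimal subextensions, $x + y$ again gives a non-minimal monogenic subextension, the standard trick already used in Proposition \ref{7.11} and Proposition \ref{7.2.1}). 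If all are radicial: then $\mathrm{c}(k) = p$ and $x^p \in k$ for every $x \in S \setminus k$ (and trivially for $x \in k$), which is precisely the definition of $k \subset S$ being radicial of height one.

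The main obstacle I anticipate is the "mixed" sub-case and, more generally, the uniformity argument in (1) $\Rightarrow$ (3): showing that one cannot have both a separable and an inseparable monogenic minimal subextension, and that within the separable case the extension is globally minimal and not just "locally" so. The clean tool is the $z = x+y$ device combined with the fact that the crucial ideal is forced to be the same $M = 0$ for all $R[t]$ (Proposition \ref{7.2.1}), reducing everything to a degree/field-theory computation with minimal polynomials; the inseparable case is then the "everything else" and is essentially a definitional repackaging.
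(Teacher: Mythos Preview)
Your approach is correct and more self-contained than the paper's, which is quite terse: the paper cites \cite[Lemme 3.10]{Pic 1} for the fact that $S$ is a field, outsources (1)$\Leftrightarrow$(3) entirely to \cite[Proposition 4.16]{CDL}, and only writes out (3)$\Rightarrow$(2) (the argument that $x^p\in k\subseteq T$ forces $[T[x]:T]=p$, exactly as you do) and the trivial (2)$\Rightarrow$(1). By contrast, you prove $S$ is a field internally via Lemma~\ref{7.9}(1) and Proposition~\ref{7.90}(5), and you supply a direct argument for (1)$\Rightarrow$(3) using the separable/radicial dichotomy of Lemma~\ref{1.3.1}(2) together with the $z=x+y$ device. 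Both routes are valid; yours has the advantage of not depending on the two outside references, while the paper's is shorter precisely because it does.

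Two small points to tighten in your (1)$\Rightarrow$(3). In the mixed case your $z=x+y$ trick is fine once you observe that $k[z]$, being minimal, is either separable (hence contained in the separable closure $k[x]$, forcing $y\in k[x]$) or radicial (hence contained in $k[y]$, forcing $x\in k[y]$), both impossible. In the all-separable case, however, $x+y$ need not be a primitive element of $k[x,y]$; what you actually want is the Primitive Element Theorem: if $k[x]\neq k[y]$ are two separable minimal subextensions, then $k[x,y]=k[z]$ for some $z$, and $[k[z]:k]=[k[x]:k]\cdot[k[y]:k]$ is a product of two primes, hence not prime, so $k\subset k[z]$ is not minimal. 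With that correction your argument goes through cleanly.
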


\begin{proof} In view of \cite[Lemme 3.10]{Pic 1}, $S$ is a field. Then,    \cite[Proposition 4.16]{CDL}  shows that (1) $\Leftrightarrow$ (3). A pointwise minimal pair is  pointwise minimal. To end, assume that (3) holds. If $k\subset S$ is minimal, we are done. Assume that $k\subset S$ is a  height one radicial  
  extension where $\mathrm{c}(k) =p$  and  $x^p\in k$ for each $x\in S$.
 Let $T\in[k,S]$ and $x\in S\setminus T$. Then, $T$ is a field and $x^p\in k\subseteq T$ implies that $[T[x]:T]=p$. Hence,  $T\subset T[x]$ is  minimal  and $k\subset S$ is a pointwise minimal pair.
\end{proof}

  We now  intend to characterize pointwise minimal extensions $k\subset S$, where $k$ is a field, that do not satisfy the conditions of Propositions \ref{7.10}, \ref{7.11} and \ref{7.13}. The following lemma gives a necessary condition on the seminormalization and the t-closure. 

\begin{lemma}\label{7.14} If  $k\subset S$ is  pointwise minimal  such that ${}_S^tk\neq{}_S^+k$, then, $k\subset S$ is  seminormal  and infra-integral.

\end{lemma}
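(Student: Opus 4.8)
The plan is to analyze the canonical decomposition $k \subseteq {}_S^+k \subseteq {}_S^tk \subseteq S$ and show that, under the pointwise minimality hypothesis, the failure of ${}_S^+k = {}_S^tk$ forces both $k = {}_S^+k$ and ${}_S^tk = S$, and moreover that the middle extension $k \subseteq {}_S^tk$ must be seminormal and infra-integral, whence the whole extension is. The key leverage is the dichotomy recorded just before the lemma (and proved in Proposition~\ref{7.13}): for a \emph{field} base, a t-closed pointwise minimal extension is either minimal separable or height-one radicial. In particular, if $k$ is a field and ${}_S^tk$ is a field, then $k \subseteq {}_S^tk$, being t-closed, has this form. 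The seminormal-infra-integral case (Proposition~\ref{7.11}) and the subintegral case (Proposition~\ref{7.10}) are the "other" building blocks, and the goal is to show that the t-closed and subintegral contributions cannot genuinely coexist.

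First I would invoke Proposition~\ref{7.2}(3): every $k \subseteq T$ with $T \in [k,S]$ is again pointwise minimal. So $k \subseteq {}_S^+k$ is pointwise minimal and subintegral, hence by Proposition~\ref{7.10} (applied with base field $k$) it has ${}_S^+k$ local with maximal ideal $N$ satisfying $N^{[2]} = 0$; in particular ${}_S^+k = k + N$. Next, $k \subseteq {}_S^tk$ is pointwise minimal and infra-integral. The crucial observation is that the crucial ideal $M$ of $k \subseteq S$ is shared by all subextensions $k \subseteq R[t]$ (by Proposition~\ref{7.2}(1), since the extension is integral, $M = (k : S)$, a maximal ideal of $k$, forcing $M = 0$ as $k$ is a field — so in fact $k$ is the crucial "ideal" and every minimal $k \subseteq k[t]$ is $0$-crucial). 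This means Lemma~\ref{1.5} and the machinery around shared crucial ideals are available: one cannot have a ramified and an inert minimal subextension sitting over the same ideal in an incompatible way, but here the real tool is the type-mixing restriction. The strategy is: pick $x \in N \setminus \{0\}$ (so $k \subseteq k[x]$ is minimal ramified, $x^2 = 0$) and, if ${}_S^tk \neq {}_S^+k$, pick $y \in {}_S^tk \setminus {}_S^+k$; then $k \subseteq k[y]$ is infra-integral but not subintegral, so its a priori decomposed or inert component differs in type from the ramified one at $k[x]$. Forming $z = x + y$ as in the proof of Proposition~\ref{7.2.1}, I would derive a contradiction with pointwise minimality of $k \subseteq k[z]$ unless $N = 0$, i.e.\ $k = {}_S^+k$.

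Having reduced to $k = {}_S^+k$ (so $k \subseteq S$ is seminormal), I would then show $k \subseteq S$ is infra-integral, equivalently ${}_S^tk = S$ is \emph{not} what we want — rather, I want to rule out a genuine t-closed top. If ${}_S^tk \neq S$, then $S$ has a nontrivial t-closed extension over ${}_S^tk$; but more to the point, ${}_S^tk$ itself would then be a proper t-closed extension of $k$, hence a field (by \cite[Lemme 3.10]{Pic 1}, as in Proposition~\ref{7.13}), and $k \subseteq {}_S^tk$ would be minimal separable or height-one radicial. Combined with a nontrivial decomposed/seminormal part below — but we just showed the seminormalization is trivial, so the only room between $k$ and ${}_S^tk$ is the seminormal-infra-integral piece, which by Proposition~\ref{7.11} for $|k| \neq 2$ is itself minimal; juxtaposing a minimal decomposed with a minimal inert over the crucial ideal $k$ contradicts Lemma~\ref{1.5} (one is finite minimal of decomposed/inert type, and the shared crucial ideal $0 \in \mathrm{Max}(k)$ would force $R$ non-local, absurd for a field). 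The case $|k| = 2$ needs the separate bound $n \leq 3$ from Proposition~\ref{7.11}(2), but the same incompatibility argument applies. Hence ${}_S^tk = S$ fails to be the scenario with a real inert component, i.e.\ $k \subseteq S$ is seminormal and infra-integral.

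The main obstacle I expect is the middle step: cleanly extracting, from a single element $y \in {}_S^tk \setminus {}_S^+k$, a minimal subextension $k \subseteq k[y_0]$ of a definite type (decomposed or inert) that is incompatible with the ramified $k \subseteq k[x]$, and then running the $z = x+y$ trick so that the crucial ideal of $k \subseteq k[z]$ — which must be $0$ since it is a pointwise-minimal subextension of an integral extension over a field — is forced to coincide with both, producing the contradiction $x/1 \in k$ or $y/1 \in k$. The delicate point is that $k[y]$ need not itself be a field if $y \notin {}_S^+k$ only prevents subintegrality, not infra-integrality failing; so I would first pass to the seminormalization ${}_{k[y]}^+k$ and its complement inside $k[y]$ to locate a genuinely non-subintegral minimal step. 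Once the type-separation is in hand, the rest is bookkeeping with the shared crucial ideal and Lemma~\ref{1.5}, together with Propositions~\ref{7.10}, \ref{7.11}, and \ref{7.13} to identify the allowed shapes of the three canonical pieces.
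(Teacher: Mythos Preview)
Your overall strategy---show $k={}_S^+k$ and then ${}_S^tk=S$---matches the paper's, but the mechanisms you invoke do not work, and in several places the canonical decomposition is read backwards. First, Lemma~\ref{1.5} is irrelevant here: it separates a \emph{finite} minimal extension from a \emph{Pr\"ufer} minimal one, whereas every minimal subextension of an integral extension over a field is finite. Two finite minimal extensions (ramified and decomposed, or decomposed and inert) can perfectly well share the crucial ideal $0$ of $k$, so no contradiction arises that way. Second, you write that ``$k\subseteq {}_S^tk$, being t-closed'' and later that ``${}_S^tk$ itself would then be a proper t-closed extension of $k$''; in fact $k\subseteq{}_S^tk$ is infra-integral and ${}_S^tk\subseteq S$ is t-closed, so the appeal to Proposition~\ref{7.13} at that stage is misplaced. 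Third, the $z=x+y$ device from Proposition~\ref{7.2.1} distinguishes \emph{different} crucial maximal ideals; over a field there is only one, so that argument by itself yields nothing.

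What the paper actually does is more hands-on. For $k={}_S^+k$: with $x\in{}_S^+k\setminus k$ (so $k\subset k[x]$ ramified) and $y\in{}_S^tk\setminus{}_S^+k$ (so $k\subset k[y]$ is infra-integral, not subintegral, hence \emph{decomposed} by Proposition~\ref{7.90}), set $z=x+y$; then $z\notin{}_S^+k$ forces $k\subset k[z]$ decomposed as well, and one compares the two descriptions of $k[y]\subset k[x,y]=k[y,z]$ via \cite[Proposition 7.6]{DPPS} (ramified versus decomposed towers) to get a contradiction. For ${}_S^tk=S$: with $x\in{}_S^tk\setminus k$ chosen idempotent (so $k\subset k[x]$ decomposed) and $y\in S\setminus{}_S^tk$ (so $k\subset k[y]$ inert, $k[y]$ a field), one sets $z=xy$, checks $z\notin k$, argues $k\subset k[z]$ cannot be inert (else $z$, hence $x$, would be a unit), so it is decomposed; an explicit computation with the idempotent generator of $k[z]$ then forces $y\in k$, a contradiction. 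Your sketch never supplies either of these concrete contradictions, and the tools you name (Lemma~\ref{1.5}, the crucial-ideal trick) simply do not bite in this setting.
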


\begin{proof} In order to show that $k\subset S$ is seminormal infra-integral, it is enough to show that $k={}_S^+k$ and $S={}_S^tk$ (Definition \ref{7.90.0}). So, deny that these two equations hold.

Assume first that $k\neq{}_S^+k$. Then, there exist $x\in{}_S^+k\setminus k$ and $y\in{}_S^tk\setminus{}_S^+k$. We now observe that $k\subset k[x]$ is  minimal ramified  and $k\subset k[y]$  is minimal decomposed. 
 Indeed, $x\in{}_S^+k$ shows that $k\subset k[x]$ is minimal ramified by Proposition \ref{7.90}(1). Moreover, $y\not\in{}_S^+k$ shows that $k\subset k[y]$  is not ramified and  
 then is decomposed since $y\in{}_S^tk$, by Proposition \ref{7.90}(2). Set $z:=x+y$. Then,  $z\not\in{}_S^+ k$ since $y\not\in{}_S^+k$,  so that $z\not\in k[x]$, but $z\in{}_S^tk$ because $x,y\in{}_S^tk$. This implies that $k\subset k[z]$ is  minimal  and infra-integral, necessarily decomposed, because not ramified. Moreover, $k[z,y]=k[x,y]=
  k[x]k[y]=k[y]k[z]$. We  first get that $k\subset k[x,y]$ is  the composite of a minimal ramified extension $k\subset k[x]$ and a minimal decomposed extension $k\subset k[y]$. It follows from  \cite[Proposition 7.6]{DPPS}, that $k[y]\subset k[x,y]$ is either ramified, or a tower of two ramified extensions. Now, 
 since
  $k\subset k[z,y]=k [x,y]$ is the composite of two minimal decomposed extensions $k\subset k[y]$ and $k\subset k[z]$, from \cite[Proposition 7.6]{DPPS}, we deduce  that $k[y]\subset k[x,y]=k[z,y]$ is either decomposed, or a tower of two decomposed extensions, a contradiction.
Then $k={}_S^+k$ and $k\subset S$ is seminormal.

Assume now that $S\neq{}_S^tk$. Then, there exist $x\in{}_S^tk\setminus k$ and $y\in S\setminus{}_S^tk$. We intend to prove that $k\subset k[x]$ is  minimal decomposed  and $k\subset k[y]$ is  minimal inert. 
To begin with,  $x\in{}_S^tk$ shows that $k\subset k[x]$ is  minimal,   seminormal and infra-integral, and then    decomposed   by Proposition \ref{7.90}(3). Moreover, $y\not\in{}_S^tk$ shows that $k\subset k[y]$ is not infra-integral and then cannot be decomposed (Proposition \ref{7.90}(3)). 
 Then, $k\subset k[y]$ is inert.
There is no harm to choose $x$ such that $x^2=x\ (*)$ 
 because $k\subset k[x]$ is decomposed (Lemma \ref{1.3}). The maximal ideals of $k[x]$ are $M_1:=kx$ and $M_2:=k(1-x)$ and there exist $M'_1,M'_2\in\mathrm{Max}(k[x,y])$ such that $M'_i$ lies over $M_i$ for $i=1,2$. In particular, $x\in M'_1\setminus M'_2$ because $1-x\in M_2 \subseteq M'_2$. Now, $k\subset k[y]$ is a field extension  because $k\subset k[y]$ is inert, giving that $y$ is a unit in $k[y]$, and also in $k[x,y]$. Set $z:=xy\in k[x,y]$. Then, $z\in M'_1\setminus M'_2\ (**)$ 
 because $x\in M_1'$ and if $z\in M'_2$, then, $y\in M'_2$ since $x\not\in M'_2$, a contradiction. In particular, $z\not\in k$, since $z\in k$ would imply $z\in M'_1\cap k=0$. Moreover, $k\subset k[z]$ is  minimal and seminormal. We claim that $k\subset k[z]$ is not inert. Deny. Then, $k [z]$ is a field, so that $z$ is a unit in $k[z]$, and also in $k[x,y]$, implying that $x$ is also a unit in $k[x,y]$, a contradiction with $(*)$. It follows that $k\subset k[z]$ is decomposed. Hence, $k[z]$ has two maximal ideals $N_1$ and $N_2$. Using $(**)$  and setting $N_i:=M'_i\cap k[z]$ we get that $z\in N_1 \setminus N_2$. Indeed, $z\in M'_1\cap k[z]$ and $z\not\in M'_2\cap k[z]$. In particular, there exists $t\in N_1$ such that $N_1=kt,\ N_2=k(1-t)$, with $t^2=t\ (***)$  since $k\subset k[z]$ is decomposed. Then, $z=at$, for some $a\in k\setminus\{0\}$. Set $b:=a^{-1}\neq 0$ so that $t=bz=bxy$. By $(***)$, we have $b^2x^2y^2=bxy=b^2xy^2$,  by $(*)$, giving $x=bxy$, that is $x(1-by)=0 \in M'_2$ in $k[x,y]$. This implies that $1-by\in M'_2\cap k[y]=\{0\}$ because $x\not\in M'_2$ and $k[y]$ is a field.  Hence, $y\in k$, a contradiction. Then, $S={}_S^tk$ and $k\subset S$ is   infra-integral.
 \end{proof}

The last case to consider is an extension of the form $k\subset T\subset S$, where 
 $k\subset T$ is subintegral and $ T\subset S$ is t-closed. 
 
 \begin{proposition}\label{7.15}  If $k\subset S$ is such that $k\neq{}_S^tk={}_S^+k\neq S$, then $T:={}_S^tk$ and $S$ are both local rings sharing a same  maximal ideal $N$. Moreover, $k\subset S$ is a pointwise minimal extension if and only if the following conditions are satisfied:  
\begin{enumerate}

\item $N^{[2]}=0$.

\item  
 $k\subset S$ is radicial of height one, whence also $k\subset S/N$.
\end{enumerate}
 
 Such an extension is never a pointwise minimal pair.
 \end{proposition}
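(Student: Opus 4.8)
\emph{Determining the structure.} Since $T={}_S^+k$, the extension $k\subset T$ is subintegral, so by Proposition~\ref{7.10} the ring $T$ is a zero-dimensional local ring with maximal ideal $N:=\sqrt[T]{0}$, and its residual extension $k=k/0\to T/N$ is an isomorphism, whence $T=k+N$. Because $k$ is a field and $S\neq k$, the conductor $(k:S)$ is a proper ideal of $k$, hence $(k:S)=0\in\mathrm{Max}(k)$; as moreover ${}_S^+k={}_S^tk$, Lemma~\ref{7.90.1} applies and shows that $S$ has a unique maximal ideal, which coincides with the maximal ideal of $T$, namely $N=\sqrt[S]{0}=(T:S)$. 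Thus $T$ and $S$ are local with common maximal ideal $N$, and $N$ is the nilradical of $S$.

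\emph{Necessity of (1) and (2).} Assume $k\subset S$ is pointwise minimal. Since $T\in[k,S]\setminus\{k\}$, Proposition~\ref{7.2}(3) gives that $k\subset T$ is pointwise minimal, and being subintegral it satisfies $N^{[2]}=0$ by Proposition~\ref{7.10}(1); this is (1). For (2), fix $y\in S\setminus T$. Then $k\subset k[y]$ is minimal, and it is not infra-integral (otherwise $k[y]\subseteq{}_S^tk=T$), so by Lemma~\ref{1.3} and Proposition~\ref{7.90}(2) it is inert; in particular $k[y]$ is a field and $k\subset k[y]$ is a minimal field extension. I claim it is radicial. If not, it is separable, so $P:=P_{k,y}(X)$ is separable and $P'(y)$ is a unit of $k[y]$. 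Picking $n\in N\setminus\{0\}$ and using $n^2=0$, one gets in $S$ the identity $P(y+n)=P(y)+P'(y)n=P'(y)n$, a nonzero nilpotent; but $y+n\notin T$, so $k[y+n]$ is again a field containing $P(y+n)$, a contradiction. Hence $k\subset k[y]$ is radicial, so $\mathrm c(k)=p\in\mathbb P$ and, by Lemma~\ref{1.3.1}(3), $y^p\in k$. Finally, if $x\in T$, write $x=a+\nu$ with $a\in k$ and $\nu\in N$; since $\mathrm c(k)=p$ and $\nu^p=0$, we obtain $x^p=a^p\in k$. Therefore $x^p\in k$ for every $x\in S$, that is, $k\subset S$ is radicial of height one, and consequently so is $k\subset S/N$. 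I expect this step --- excluding a separable minimal field extension $k\subset k[y]$ via the square-zero perturbation $y\mapsto y+n$ --- to be the part requiring the most care.

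\emph{Sufficiency.} Conversely, assume (1) and (2), and let $y\in S\setminus k$. If $y\in T$, write $y=a+\nu$ with $a\in k$ and $\nu\in N\setminus\{0\}$; then $\nu^2=0$ gives $k[y]=k[\nu]\cong k[X]/(X^2)$, so $k\subset k[y]$ is minimal (ramified) by Lemma~\ref{1.3}. If $y\notin T$, set $c:=y^p\in k$ (using (2)); then $c$ is not a $p$-th power in $k$, since $c=d^p$ would give $y-d\in\sqrt[S]{0}=N$, hence $y\in T$. So $X^p-c$ is irreducible over $k$, $k[y]\cong k[X]/(X^p-c)$ is a field, and $k\subset k[y]$ is minimal by Lemma~\ref{1.3.1}(3). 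In every case $k\subset k[y]$ is minimal, so $k\subset S$ is pointwise minimal. This settles the equivalence.

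\emph{It is never a pointwise minimal pair.} Pick $n\in N\setminus\{0\}$ and $y\in S\setminus T$, and set $T':=k[n]=k\oplus kn\in[k,S]\setminus\{S\}$. Since the field $k[y]$ contains no nonzero nilpotent, $\{1,n\}$ is $k[y]$-linearly independent in $S$, so $T'[y]=k[n,y]\cong k[y][U]/(U^2)$: a local ring whose maximal ideal $\mathfrak m=k[y]\,n$ satisfies $\mathfrak m^2=0$ and whose only ideals are $0$, $\mathfrak m$ and $k[n,y]$. As $\mathfrak m\subseteq k[n]$ would force $k[y]=k$, the conductor $(k[n]:k[n,y])$ equals $0$, which is not a maximal ideal of the (non-field) local ring $k[n]$; hence $k[n]\subset k[n,y]$ is not minimal by Theorem~\ref{1.4}. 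Therefore $T'\subset S$ is not pointwise minimal (witnessed by $y\in S\setminus T'$), and so $k\subset S$ is not a pointwise minimal pair.
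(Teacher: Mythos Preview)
Your proof is correct and follows essentially the same strategy as the paper's: the structural claim via Lemma~\ref{7.90.1}, condition~(1) via Proposition~\ref{7.10}, and the exclusion of a separable $k\subset k[y]$ via a Taylor expansion against a square-zero element are all the same ideas. The paper takes $P:=P_{k,\,x+y}$ and reads off $P(y)=P'(y)=0$ from $P(x+y)=0$, while you take $P:=P_{k,y}$ and obtain the nonzero nilpotent $P(y+n)=P'(y)n$ inside the field $k[y+n]$; these are two phrasings of the same computation.

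Two places where your execution differs in a useful way: for sufficiency you argue directly that $c:=y^p\notin k^{[p]}$ (else $y-d\in\sqrt[S]{0}=N$ forces $y\in T$), hence $X^p-c$ is irreducible and $k[y]$ is a degree-$p$ field extension, whereas the paper detours through $k\subset S/N$ to reach the same conclusion; and for the ``never a pair'' claim you compute the conductor $(k[n]:k[n,y])=0$ explicitly and invoke Theorem~\ref{1.4}, whereas the paper simply cites \cite[Proposition~7.4]{DPPS}. Both of your variants are more self-contained than the original.
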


 \begin{proof}  
  We get that $(k:S)=0$ since $k$ is a field. By Lemma \ref{7.90.1}, it follows that $N:=(T:S)$ is the only maximal ideal of $T$ and $S$.  
Assume that $k\subset S$ is  pointwise minimal. Then so is  $k\subset T$ and (1) holds, since it satisfies the conditions of Proposition \ref{7.10}. 

We now show (2). Fix some    $x\in N\setminus k$ , so that $x^2= 0$ by (1). 
Then, $k\subset k[x]$ is minimal ramified  by Proposition \ref{7.90}(1) because $x\in {}_S^+k$. 
 Let $y\in S\setminus T$. It follows that $k\subset k[y]$ is a minimal extension, which is  neither ramified nor decomposed, since $y\not\in {}_S^tk$. So,   
$k\subset k[y]$ is  minimal inert 
 and then t-closed,  whence $k[y]$ is a field by Proposition  \ref{7.13}. 
   Now set $z:=x+y$. We get that $z\not\in {}_S^tk$ since $x\in{}_S^tk$ and $y\not\in{}_S^tk$. Then,    $k\subset k[z]$ is  minimal 
   inert 
 by the same proof used for $k\subset k[y]$
   and then  a field extension  by Proposition \ref{7.13}. Let $P(X) = P_{k,z}(X)\in k[X]$,  with derivative $P'(X)$. From $P(x+y)=0$,  we deduce that $P(y)+xP'(y)=0$. Since $\{1,x\}$ is a  basis of $k[x,y]$ over the field $k[y]$, we get $P(y)=P'(y)=0$,  from which we infer that $P'(X)=0$, because $P(X)$ is irreducible and $P(y)=0$ shows that $P(X)= P_{k,y}(X)$. In particular, $P(X)$ is not  separable.  Hence, $k\subset k[y] $  is   radicial  by Lemma ~\ref{1.3.1} and $\mathrm{c}(k)= p\in \mathbb{P}$. Now, $k\cong T/N\subset S/N$ is a field extension. If $\bar y$  is the class of $y$ in $S/N$,  $P(X)= P_{k,\bar y}(X)$ and then  $\bar y$ is a  radicial element. 
 Now from $T=k+N$  ($k\subset T$ is subintegral), we deduce that  each
 $t\in T$ is of the form  $t=b+m$, for some $b\in  k$ and $m\in N$. Then, $t^p=b^p+m^p=b^p\in k$ by (1).   
 If $y$ belongs to $S\setminus T$, the above proof shows that the minimal extension $k\subset k[y]$ is radicial  and $y^p\in k$ by Lemma~\ref{1.3.1}.  Then, $k\subset S$ is radicial of height one, whence also  $k\subset S/N$.
The proof of (2) is now complete. 

Conversely, assume that (1) and (2) hold. Proposition  \ref{7.10}(1) entails  that $k\subset k[x]$ is minimal for each $x\in T\setminus k$. Let $y\in S\setminus T$ and $\bar y$ its class  in $S/N$. By (2), we get that $y^p=a\in k$, giving $\bar y^p=a\in k$. From  (2)  we deduce  that $k\subset k[\bar y]$ is radicial  and necessarily $ P_{k,\bar y} =X^p-a$. It follows that $X^p -a= P_{k,y}(X)$ and then  $k[y]\cong k[X]/(X^p-a)$. Therefore, $k[y]$ is a field such that $[k[y]:k]=p$ and  then $k\subset k[y]$ is a minimal field extension. The proof is complete.
 
We claim that under these conditions, $k\subset S$ is not a pointwise minimal pair. Take again $x\in T\setminus k$ and $y\in S\setminus T$. Then $k\subset k[x]$ is minimal ramified, and $k\subset k[y]$ is minimal inert, but   $k[x]\subset k[x,y]$ is not minimal  \cite[Proposition 7.4]{DPPS}.
\end{proof}

\begin{remark}\label{7.151} We exhibit  an extension $k\subset S$ which satisfies the hypothesis of Proposition  \ref{7.15}, such that 
$N^{[2]}=0$
 and such that $k\subset S/N$ is radicial of height one, but which is not  pointwise minimal.

Let $k$ be a field with  $\mathrm{c}(k)= 2$ 
  and such that $k\neq k^{[2]}$.
  Set $R:=k[T]/(T^2)=k[t]=k+kt$, where $t$, the class of $T$, satisfies $t^2=0$. Then, $k\subset R$ is  minimal ramified, so that $R$ is a zero-dimensional local ring  with maximal ideal $M:=kt$. 
 Pick some
 $a\in k\setminus k^{[2]}$ and set $S:=R[X]/(X^2-a-t)=R[x]=R+Rx
=k+kt+kx+ktx$, 
where $x$, the class of $X$ satisfies $x^2=a+t$. Then, $k\subset S$ is an integral extension. 
 Set $R':=R[tx]=R+Rtx=k+kt+ktx$. Then $R'$ is a zero-dimensional local ring with maximal ideal $N:=kt+ktx$, satisfying $N^{[2]}=0$ and $R\subset R'$ is a minimal ramified extension such that $M=(R:R')$ and $R'/N\cong k$. It is easy to check that $N$ is an ideal of $S$, such that $S/N\cong k+k\overline x$, where $\overline x$ is the class of $x$ in $S/N$,  and $N=(R':S)$. Moreover, $\overline x$ satisfies $\overline x^2=\overline a=a\in k$, so that $S/N\cong k[\overline x]\cong k[Y]/(Y^2-a)$. Therefore,  $N\in\mathrm{Max}(S)$ and $k\subset S/N$  is minimal  radicial of height one. Then, $R'\subset S$ is minimal inert and $(S,N)$ is local. We infer that $R'={}_S^+k={}_S^tk$. But, $k\subset S$ is not pointwise minimal. Indeed, $t\in k[x]$, so that $S=k[x,t]=k[x]$ is such that $k\subset k[x]$ is not minimal. 
\end{remark}

\section{Arbitrary integral extension}

Gathering results of Propositions \ref{7.10}, \ref{7.11}, \ref{7.13} and \ref{7.15}, we are now able to characterize pointwise minimal extensions and pointwise minimal pairs. We 
 first 
give a statement for an integral extension $k\subset S$, where $k$ is a field, and then for an arbitrary  integral extension. 
 As we saw in the previous sections, some pointwise minimal extensions are minimal. We first  get rid of these  cases  in the next result.

\begin{proposition}\label{7.160} Let $R\subset S$ be an $M$-crucial  extension, satisfying one of the following  mutually exclusive conditions:

\begin{enumerate} 
\item $R\subset S$ is integrally closed.

\item $R\subset S$ is seminormal infra-integral such that $|R/M|\neq 2$.

\item $R/M\subset S/M$ is a separable field extension.
\end{enumerate} 

Then, $R\subset S$ is pointwise minimal if and only if $R\subset S$ is  minimal.
\end{proposition}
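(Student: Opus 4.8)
\emph{Plan.} I would argue only the nontrivial direction, ``pointwise minimal $\Rightarrow$ minimal'', since the reverse implication is the elementary one recorded in the Introduction and uses none of (1)--(3). So assume $R\subset S$ is pointwise minimal. First, Proposition~\ref{7.6} tells us $R\subset S$ is integrally closed or integral. If it is integrally closed --- which is exactly case~(1), and is also a possibility a priori under~(3) --- then Proposition~\ref{7.7} already says it is (Pr\"ufer) minimal, and we are done. Under~(3) I would therefore reduce to the case where $R\subset S$ is integral; under~(2) the extension is integral automatically, since an infra-integral extension is integral by definition.

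\emph{Reducing to a residue-field extension.} With $R\subset S$ integral and pointwise minimal, Proposition~\ref{7.2}(1) makes it $(R:S)$-crucial, so $M=(R:S)$. The point I would stress is that $M$ is a common ideal of $R$ and $S$ with $M\subseteq R\subseteq T$ for every $T\in[R,S]$; hence $T\mapsto T/M$ is a lattice isomorphism $[R,S]\xrightarrow{\sim}[R/M,S/M]$, so $R\subset S$ is minimal exactly when $k:=R/M\subset S/M$ is, and by Proposition~\ref{7.2}(2) the latter extension is again pointwise minimal. As $k$ is a field, this lands us in the setting of Section~4, and it remains to prove that $k\subset S/M$ is minimal.

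\emph{Finishing in cases (2) and (3).} In case~(2) I would check that seminormality and infra-integrality descend modulo $M$: the residual extensions of $k\subset S/M$ are exactly those of $R\subset S$ over the primes containing $M$, hence isomorphisms; and if $b\in S$ with $\overline{b}^2,\overline{b}^3\in R/M$, then $b^2,b^3\in R+M=R$, so $b\in R$, i.e.\ $\overline{b}\in R/M$. Since $|k|=|R/M|\neq 2$, Proposition~\ref{7.11}(1) then yields that $k\subset S/M$ is minimal. In case~(3), $k\subset S/M$ is a separable field extension, which is $t$-closed (being algebraic over $k$, every ring in $[k,S/M]$ is a field, so its residual extension over $k$ is itself, whence ${}_{S/M}^{t}k=k$); Proposition~\ref{7.13} then forces $k\subset S/M$ to be either minimal separable or radicial of height one, and the second alternative is impossible since a nontrivial purely inseparable extension is not separable. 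Hence $k\subset S/M$ --- and therefore $R\subset S$ --- is minimal.

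\emph{Where the difficulty lies.} There is no serious obstacle here: the substance is carried by Propositions~\ref{7.7}, \ref{7.11} and~\ref{7.13} from the earlier sections. The one place demanding care is the reduction step --- verifying that $M=(R:S)$ is a genuine common ideal sitting inside every intermediate ring (so that lattices and the property of being minimal transfer faithfully between $R\subset S$ and $R/M\subset S/M$), and that the structural hypotheses of case~(2) are inherited by the quotient. Both are routine once one looks at them.
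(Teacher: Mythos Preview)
Your proposal is correct and follows essentially the same route as the paper: the paper's proof is a one-line citation of Propositions~\ref{7.7}, \ref{7.11}(1) and~\ref{7.13} for cases (1), (2) and (3) respectively, and you supply the natural details behind that citation, in particular the reduction modulo $M=(R:S)$ (via Proposition~\ref{7.2}) that the paper carries out explicitly only later in the proof of Theorem~\ref{7.17}. The one superfluous caution is allowing for $R\subset S$ to be integrally closed in case~(3): since $M$ is already an ideal of $S$ and $R/M\subset S/M$ is algebraic, every $s\in S$ satisfies a monic relation over $R$ modulo $M\subseteq R$, so the extension is automatically integral.
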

\begin{proof} Use Propositions \ref{7.7} for (1), \ref{7.11} (1) for (2) and \ref{7.13} for (3).
\end{proof}}

In the next result, we use Lemma~\ref{7.90.1}  in conditions (1) and (2), because  $k\subset S$ is of the form $k\subseteq T \subseteq S$, where $k\subseteq T$ is subintegral and $T\subseteq S$ is $t$-closed.  

\begin{theorem}\label{7.16} Let $k\subset S$ be a non-minimal integral extension,  where $k$ is a field, and $N: = \sqrt[S]{0}$. Consider the following   conditions:
 
\begin{enumerate} 
\item 
 ${}_S^tk={}_S^+k $,   $N^{[2]}=0$  
 and  if ${}_S^tk\subset S$, then $k\subset S$ is  a  height one radicial  extension.
 \item $k\subset S$ is subintegral and     $N^2=0$.
 
\item $|k|= 2$ and $k\subset S$ is a seminormal infra-integral extension. 

\item $|k|= 2,\ k\subset S$ is a seminormal infra-integral extension and $S\cong k^n$ with $n\leq 3$ (equivalently $\ell[k,S]\leq 2$).
\end{enumerate}
 Then, $k\subset S$ is  pointwise minimal  if and only if one of the mutually exclusive conditions (1) or (3) holds and $k\subset S$ is a pointwise minimal pair if and only if one of the  mutually exclusive conditions (1) with ${}_S^tk=k$,  
(2) or (4) holds.
\end{theorem}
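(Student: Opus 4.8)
\textbf{Proof plan for Theorem~\ref{7.16}.}
The strategy is to reduce everything to the propositions of Section~4 via the canonical decomposition $k\subseteq {}_S^+k\subseteq {}_S^tk\subseteq S$ (here $\overline k=S$ since $k$ is a field and $k\subset S$ is integral, so $S$ is integral hence algebraic over $k$, hence $\overline k=S$). First I would dispose of the easy direction: each of (1), (2), (3), (4) directly matches the hypotheses of one of Propositions~\ref{7.10}, \ref{7.11}, \ref{7.13}, \ref{7.15}, so the corresponding ``pointwise minimal'' or ``pointwise minimal pair'' conclusion follows immediately --- for (1) one splits into the subcase ${}_S^tk=S$ (Proposition~\ref{7.13}, since then $k\subset S$ is $t$-closed and the height one radicial / minimal separable dichotomy applies, but ``non-minimal'' forces the radicial case) and the subcase ${}_S^tk\subset S$ (Proposition~\ref{7.15}, whose two conditions are exactly $N^{[2]}=0$ and radicial of height one); for (2) use Proposition~\ref{7.10}(2); for (3) use Proposition~\ref{7.11}(2), first sentence; for (4) use Propositions~\ref{7.10}(2)/\ref{7.11}(2) as appropriate. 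I should also check mutual exclusivity: (1) forces either $k={}_S^tk$ or a radicial residual extension, so it cannot overlap the $|k|=2$ decomposed situations of (3)/(4); and the ``pair'' list is internally exclusive because (1) with ${}_S^tk=k$ means $k\subset S$ is $t$-closed hence $S$ a field, whereas (2) has a nonzero nilradical and (4) has $|k|=2$ with $S\cong k^n$.

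The substantive direction is: \emph{if $k\subset S$ is pointwise minimal (and non-minimal), then (1) or (3) holds}. Here is where Lemma~\ref{7.14} does the heavy lifting. If ${}_S^tk\neq{}_S^+k$, Lemma~\ref{7.14} says $k\subset S$ is seminormal and infra-integral; then by Proposition~\ref{7.11}(1) the case $|k|\neq 2$ would force $k\subset S$ minimal, contradiction, so $|k|=2$ and we land in condition (3). If instead ${}_S^tk={}_S^+k=:T$, set $N=\sqrt[S]{0}$. When $T=k$, the extension $k\subset S$ is $t$-closed, so by Proposition~\ref{7.13} it is minimal separable or height one radicial; non-minimality rules out the separable minimal case (a minimal separable field extension \emph{is} minimal), leaving height one radicial, and then $N=0$ trivially satisfies $N^{[2]}=0$; moreover ${}_S^tk=k\subset S$, so (1) holds. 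When $T=S$, i.e.\ $k\subset S$ is subintegral, Proposition~\ref{7.10}(1) gives $N^{[2]}=0$ (here $N$ is the maximal ideal, which equals $\sqrt[S]{0}$ since $S$ is local zero-dimensional), and ${}_S^tk=S$ so the conditional clause of (1) is vacuous --- (1) holds. When $k\subsetneq T\subsetneq S$, we are exactly in the setting of Proposition~\ref{7.15}, whose $N:=(T:S)$ coincides with $\sqrt[S]{0}$ (Lemma~\ref{7.90.1}, since $(k:S)=0$ and $N=\sqrt[S]{MS}$ with $M=0$), and it yields $N^{[2]}=0$ together with $k\subset S$ radicial of height one --- again (1) holds.

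For the pointwise minimal pair statement the argument is parallel but one tightens each branch: a pointwise minimal pair is in particular pointwise minimal, so we are already in case (1) or (3); then Proposition~\ref{7.15} tells us the strictly intermediate case $k\subsetneq{}_S^tk\subsetneq S$ is \emph{never} a pair, eliminating that branch of (1); Proposition~\ref{7.10}(2) upgrades $N^{[2]}=0$ to $N^2=0$ in the subintegral branch, giving (2); the $t$-closed branch $T=k$ gives (1) with ${}_S^tk=k$ unchanged (Proposition~\ref{7.13} shows a height one radicial extension \emph{is} a pair); and in the $|k|=2$ seminormal infra-integral branch Proposition~\ref{7.11}(2) forces $S\cong k^n$ with $n\leq 3$, i.e.\ (4). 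The main obstacle is bookkeeping: making sure the hypotheses of Section~4's propositions genuinely apply --- in particular identifying $N=\sqrt[S]{0}$ with the maximal ideal / conductor appearing in Propositions~\ref{7.10} and \ref{7.15} via Lemma~\ref{7.90.1}, and confirming that ``non-minimal'' is exactly what kills the separable and the $n\leq 2$ degenerate cases so the trichotomy of conditions is clean and mutually exclusive. None of these steps is deep; the content is entirely in Lemma~\ref{7.14} and the four propositions, which I would simply cite.
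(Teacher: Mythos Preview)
Your proof strategy is essentially identical to the paper's: split on whether ${}_S^tk={}_S^+k$, invoke Lemma~\ref{7.14} when they differ, and in each branch cite the matching proposition among \ref{7.10}, \ref{7.11}, \ref{7.13}, \ref{7.15}. The converse direction you wrote is correct and matches the paper line for line.

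There is, however, a slip in your forward (``easy'') direction for condition~(1). You split (1) into ``${}_S^tk=S$ (Proposition~\ref{7.13}, since then $k\subset S$ is $t$-closed)'' and ``${}_S^tk\subset S$ (Proposition~\ref{7.15})''. But ${}_S^tk=S$ together with ${}_S^+k={}_S^tk$ means ${}_S^+k=S$, i.e.\ $k\subset S$ is \emph{subintegral}, not $t$-closed; this subcase is handled by Proposition~\ref{7.10}(1), not \ref{7.13}. The $t$-closed subcase is ${}_S^tk=k$ (a proper subcase of ${}_S^tk\subset S$), and Proposition~\ref{7.15} only applies when $k\subsetneq{}_S^tk\subsetneq S$, so your second branch conflates two distinct situations. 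The correct three-way split for (1) in the forward direction is: ${}_S^tk=S$ (subintegral, Proposition~\ref{7.10}); ${}_S^tk=k$ ($t$-closed, Proposition~\ref{7.13}); $k\subsetneq{}_S^tk\subsetneq S$ (Proposition~\ref{7.15}). Since you carry out exactly this three-way split correctly in your converse paragraph, this is evidently a labeling slip rather than a conceptual error, but it should be fixed.
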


\begin{proof} Assume that one of conditions (1) or  (3) holds (resp.; (1)) with ${}_S^tk=k$, 
  (2) or (4)), then, $k\subset S$ is a pointwise minimal extension (resp. pair) in view of Propositions  \ref{7.10}, \ref{7.11}, \ref{7.13} and \ref{7.15}, 
    excluding the minimal cases.

Conversely, assume that $k\subset S$ is pointwise minimal. 
If ${}_S^tk\neq{}_S^+k$, then $k\subset S$ is seminormal infra-integral by Lemma  \ref{7.14} and satisfies    
 (3) by Proposition \ref{7.11}. In particular, if $k\subset S$ is a pointwise minimal pair, Proposition \ref{7.11} shows also  that $k\subset S$ satisfies  
 (4).
 
  Assume now that ${}_S^tk={}_S^+k$. If $k\neq{}_S^tk={}_S^+k\neq S$, then $k\subset S$ satisfies (1) by Proposition \ref{7.15}. Moreover,  Proposition \ref{7.15} says that $k\subset S$ is not a pointwise minimal pair. 
  
  Two  cases are remaining.  The first one is when  
 $k={}_S^tk={}_S^+k\neq S$, that is $k\subset S$ is t-closed, and Proposition \ref{7.13} gives (1) for both  a pointwise minimal extension and a pointwise minimal pair. The second  case is 
 when $k\neq{}_S^tk={}_S^+k= S$, that is $k\subset S$ is subintegral, and Proposition \ref{7.10} gives (1) for a pointwise minimal extension and (2) for a pointwise minimal pair.
\end{proof}

 Using  the four conditions of Theorem \ref{7.16}, we are now able to give  a complete characterization of pointwise minimal extensions and  pairs.

 \begin{theorem}\label{7.17} Let $R\subset S$ be a non-minimal integral 
extension with $M:=(R:S)\in \mathrm{Max}(R)$. Consider the following   conditions:
 
\begin{enumerate} 
\item  ${}_S^tR={}_S^+R$, 
 $\sqrt[S]{M}^{[2]}\subseteq M$ 
 and, if ${}_S^tR\subset S$, then $R\subset S$ is  a  height one radicial  extension.

\item $R\subset S$ is subintegral and   $\sqrt[S]{M}^2\subseteq M$.
\item $|R/M|= 2$ and $R\subset S$ is  seminormal  and infra-integral. 

\item $|R/M|= 2,\ R\subset S$ is  seminormal  and infra-integral  and $S/M\cong (R/M)^n$ with $n\leq 3$ (equivalently $\ell[R,S]\leq 2$)..
\end{enumerate}
Then, $R\subset S$ is  pointwise minimal  if and only if  one of the mutually exclusive conditions (1), (3) holds and is a pointwise minimal pair if and only if  one of the mutually exclusive conditions (1) 
 with ${}_S^tR=R$,
 (2),  (4) holds.
\end{theorem}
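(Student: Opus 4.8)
The plan is to reduce Theorem~\ref{7.17} to the field case already settled in Theorem~\ref{7.16} by passing to the quotient by the crucial ideal $M=(R:S)$. First I would invoke Proposition~\ref{7.2}(1) to know that the crucial ideal of a pointwise minimal integral extension is $(R:S)$, and note that by hypothesis $M\in\mathrm{Max}(R)$, so $M$ is an ideal shared by $R$ and $S$ (since $MS\subseteq (R:S)S\subseteq R$ forces $MS=M$, as $M$ is maximal in $R$ and $M\subseteq MS\subseteq R$). Hence $k:=R/M$ is a field, $\overline{S}:=S/M$ is a $k$-algebra, and Proposition~\ref{7.2}(2) gives that $R\subset S$ is pointwise minimal (resp.\ a pointwise minimal pair) if and only if $k\subset \overline{S}$ is. This is the key structural fact that lets the whole statement be read off the field case.

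Next I would translate each of the four conditions of Theorem~\ref{7.16} for $k\subset\overline S$ into the corresponding condition of Theorem~\ref{7.17} for $R\subset S$. The dictionary is: the nilradical $\overline N:=\sqrt[\overline S]{0}$ of $\overline S$ equals $\sqrt[S]{M}/M$, so $\overline N^{[2]}=0$ in $\overline S$ is exactly $\sqrt[S]{M}^{[2]}\subseteq M$ in $S$, and similarly $\overline N^{\,2}=0$ corresponds to $\sqrt[S]{M}^{2}\subseteq M$; the seminormalization and $t$-closure behave well under this quotient, i.e. ${}_{\overline S}^{+}k={}_S^+R/M$ and ${}_{\overline S}^{t}k={}_S^tR/M$ (this follows from the shared-ideal situation together with the definitions in Definition~\ref{7.90.0}, and from Proposition~\ref{7.2}(2) applied to the relevant subextensions), so ${}_{\overline S}^{t}k={}_{\overline S}^{+}k$ is equivalent to ${}_S^tR={}_S^+R$ and ${}_{\overline S}^tk=k$ to ${}_S^tR=R$; the condition $|k|=2$ is $|R/M|=2$; "$k\subset\overline S$ is seminormal and infra-integral", "subintegral", "a height one radicial extension", and "$\overline S\cong k^n$" pass verbatim to the corresponding statements for $R\subset S$ modulo $M$; and $\ell[k,\overline S]=\ell[R,S]$ because $[R,S]\cong[R/M,S/M]$ as lattices under the shared-ideal correspondence. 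I would also remark that $R\subset S$ is minimal iff $k\subset\overline S$ is minimal (again via the lattice isomorphism), so "non-minimal" is preserved, which is needed to match the hypotheses of the two theorems.

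With the dictionary in place, the proof is essentially a one-line citation: by Theorem~\ref{7.16}, $k\subset\overline S$ is pointwise minimal iff it satisfies (1) or (3) of that theorem, and is a pointwise minimal pair iff it satisfies (1) with ${}_{\overline S}^tk=k$, or (2), or (4); transporting these conditions back through the dictionary yields exactly conditions (1), (3) and (1)-with-${}_S^tR=R$, (2), (4) of Theorem~\ref{7.17}, and mutual exclusivity is inherited. I expect the main obstacle to be the bookkeeping in the previous paragraph rather than any genuine difficulty: one must verify carefully that seminormalization, $t$-closure and the canonical decomposition commute with reduction modulo the shared ideal $M$, and that the various radical/power conditions translate correctly (in particular keeping track that $\sqrt[S]{M}^{[2]}\subseteq M$ is the honest analogue of $\overline N^{[2]}=0$, not $\sqrt[S]{M}^{2}\subseteq M$). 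None of this is deep, but it is the only place where something could go wrong, so the write-up should spell out the shared-ideal reduction and the commutation of the canonical decomposition with it, then quote Theorem~\ref{7.16}.
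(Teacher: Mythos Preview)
Your proposal is correct and follows essentially the same route as the paper: reduce to the field case via the shared ideal $M=(R:S)$ using Proposition~\ref{7.2}(2), check that the canonical decomposition (seminormalization and $t$-closure) commutes with reduction modulo $M$, and then read off the conclusion from Theorem~\ref{7.16}. The paper's proof is terser but relies on exactly the same dictionary you spell out; the only additional detail it supplies is a citation for ${}_{S/M}^t(R/M)=({}_S^tR)/M$ and a one-line justification for the analogous seminormalization identity.
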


\begin{proof}   We can reduce to the case where  $R$ is a field by using $R/(R:S) \subset  S/(R:S)$, Proposition \ref{7.2}(2) and  Theorem \ref{7.16}.

 First, we can remark that ${}_{S/M}^t(R/M)=({}_S^tR)/M$ and ${}_{S/M}^+(R/M)=({}_S^+R)/M$ \cite[Proposition 2.10]{Pic 1} for the t-closure. For the seminormalization, a straightforward  proof shows that $R/M\subseteq ({}_S^+R)/M$ is subintegral and $({}_S^+R)/M\subseteq S/M$ is seminormal. It follows that the properties of subintegrality, infra-integrality, seminormality and t-closedness are the same for the extensions $R\subset S$ and $R/M\subset S/M$. A similar result  holds for the minimality of $R\subset S$ and $R/M\subset S/M$,
 the cases we have to exclude.
 \end{proof}

\begin{corollary}  Let $R\subset S$ be a  non-minimal pointwise minimal integral extension and  $M=(R:S)$. The following cases occur:

$\mathrm{(a)}$  $R\subset S$ is subintegral;

 $\mathrm{(b)}$  $R\subset S$ is infra-integral and seminormal;

 $\mathrm{(c)}$ $R\subset S$ is $t$-closed and radicial of height one;
  
 $\mathrm{(d)}$ $R\subset S$ is radicial of height one and of the form $R \subset T \subset S$, where $R\subset T$ is infra-integral and $T\subset S$ is $t$-closed.
  In this case, $T\subset S$ is  pointwise minimal.

For each $N\in \mathrm{Max}(S)$ above $M$, if (a) or (b) holds     $R/M \subseteq S/N$ is an isomorphism and    pointwise minimal   if  (c) or (d) holds. 
 \end{corollary}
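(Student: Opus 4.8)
The plan is to deduce this corollary directly from Theorem~\ref{7.17} together with the canonical decomposition, handling each of the four listed cases separately. Since $R\subset S$ is non-minimal pointwise minimal and integral with $M=(R:S)\in\mathrm{Max}(R)$, Theorem~\ref{7.17} tells us that either condition (1) or condition (3) holds. First I would dispose of condition (3): there $R\subset S$ is seminormal and infra-integral, which is case (b). For condition (1) we have ${}_S^tR={}_S^+R=:T$; I would then split according to where $T$ sits in the tower $R\subseteq T\subseteq S$. If $T=R$, then $R\subset S$ is $t$-closed, and the ``if ${}_S^tR\subset S$'' clause of (1) forces $R\subset S$ to be a height one radicial extension (note that when $R=T\subsetneq S$ we are automatically in the subcase ${}_S^tR\subset S$), giving case (c). If $T=S$, then $R\subset S$ is infra-integral and, being equal to its own seminormalization-in-$S$ only when... — more precisely $R\subseteq{}_S^+R=S$ means $R\subset S$ is subintegral, which is case (a). Finally if $R\subsetneq T\subsetneq S$, condition (1) again gives that $R\subset S$ is height one radicial, and by construction $R\subset T={}_S^+R$ is subintegral (hence infra-integral) while $T={}_S^tR\subset S$ is $t$-closed; that is case (d). The claim that $T\subset S$ is pointwise minimal in case (d) follows from Proposition~\ref{7.2}(3), since $T\in[R,S]\setminus\{R\}$ (or more carefully, from $T\subset S$ being a subextension of the pointwise minimal pair structure — but since we only know $R\subset S$ is pointwise minimal, I would invoke Proposition~\ref{7.2}(3) which gives $R\subset T$ and also, reading the proof of Proposition~\ref{7.15}, that $T\subset S$ inherits pointwise minimality).

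Actually the cleanest route for the ``$T\subset S$ pointwise minimal'' assertion in (d) is to recall that Proposition~\ref{7.15} was proved precisely under the hypothesis $k\neq{}_S^tk={}_S^+k\neq S$ and its conclusion, after reducing modulo $M$, describes exactly this configuration; the proof there exhibits $T\subset S$ as radicial of height one with $S/N$ a radicial field extension of $T/N\cong R/M$, and a height one radicial extension over a field is pointwise minimal by Proposition~\ref{7.13}. So I would simply cite that $T/N\subset S/N$ is radicial of height one (which is part of the characterization in Proposition~\ref{7.15} applied to $R/M\subset S/M$) and then note $T\subset S$ is $T$-crucial with crucial ideal $N=(T:S)$ by Lemma~\ref{7.90.1}, so Proposition~\ref{7.2.1} or~\ref{7.2}(2) reduces pointwise minimality of $T\subset S$ to that of $T/N\subset S/N$, which holds by Proposition~\ref{7.13}.

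For the last sentence about residual extensions at each $N\in\mathrm{Max}(S)$ lying over $M$: in cases (a) and (b) the extension $R\subset S$ is infra-integral, so by Definition~\ref{7.90.0} every residual extension $R/(N\cap R)\to S/N$ is an isomorphism; since $M=(R:S)$ is the crucial ideal (Proposition~\ref{7.2}(1)) and $MS=M\subseteq N$ forces $N\cap R=M$, we get $R/M\xrightarrow{\sim}S/N$. In cases (c) and (d), $R\subset S$ is height one radicial; by Lemma~\ref{7.90.1} (applied with $T={}_S^tR={}_S^+R$, noting that in case (c) we have $T=R$ directly, and in case (d) $T$ has unique maximal ideal $N$ over $M$ which is also the unique maximal ideal of $S$) there is a unique $N\in\mathrm{Max}(S)$ over $M$, and $R/M\subset S/N$ is a height one radicial field extension — which by Proposition~\ref{7.13} or Lemma~\ref{1.3.1}(3) is minimal, hence pointwise minimal. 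I would phrase this carefully since in case (c) directly $R=T$ is already local-mod-$M$... actually $R$ need not be local, but $R/M$ is a field, so this is fine.

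The main obstacle I expect is bookkeeping around the reduction modulo $M$ and making sure the unique-maximal-ideal statements (needed so that ``for each $N\in\mathrm{Max}(S)$ above $M$'' makes sense and the residual extension is well-defined as $R/M\to S/N$) are correctly sourced: in cases (c) and (d) one must invoke Lemma~\ref{7.90.1} to know $N=\sqrt[S]{M}$ is the unique maximal ideal of $S$ over $M$, whereas in cases (a) and (b) there may be several such $N$ but each residual extension is still an isomorphism by infra-integrality. A secondary subtlety is checking that cases (a)–(d) are genuinely exhaustive and (as stated) mutually overlapping only in the trivial-extension sense — but this is immediate from the case split on the position of $T={}_S^tR={}_S^+R$ in the tower $R\subseteq T\subseteq S$ once condition (3) (which is case (b)) is separated off, since condition (3) and condition (1) are mutually exclusive by Theorem~\ref{7.16}/\ref{7.17}. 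I would not belabor this.
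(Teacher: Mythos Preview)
Your approach is essentially the same as the paper's: both derive the four cases by splitting Theorem~\ref{7.17}(1) according to the position of $T={}_S^tR={}_S^+R$ in the tower, and both handle the residual extension claim by invoking infra-integrality in cases (a)--(b) and Lemma~\ref{7.90.1} plus the height-one radicial property in cases (c)--(d). One small slip: a height-one radicial field extension is \emph{not} in general minimal (Lemma~\ref{1.3.1}(3) requires monogenicity), so in your last paragraph drop the word ``minimal'' and go straight to ``pointwise minimal'' via Proposition~\ref{7.13}; you were also right to abandon the attempt to get $T\subset S$ pointwise minimal from Proposition~\ref{7.2}(3), since that proposition only yields $R\subset T$.
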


\begin{proof}   Following  the different cases of Theorem \ref{7.17}, we get   (1) $\Rightarrow $ 
 either (a) or  (c) or (d), and  
(3) $\Rightarrow $ (b). 

We now prove the last part of the Corollary.
Since   $R\subset S$ is infra-integral if (a) or (b) holds,  $R/M \subseteq S/N$ is an isomorphism.

 Suppose that (c) holds.  As we already observed in the proof of 
 Theorem \ref{7.17}, 
  $R/M\subset S/M$ is  t-closed, so that $M$ is  the only maximal ideal of $S$ lying over $M$ 
 (Lemma \ref{7.90.1}). 
 Then, $R/M\subset S/M$ is  pointwise minimal by Proposition  \ref{7.2} (2).

  Suppose that (d) holds and set $
  T:={}_S^tR={}_S^+R$. 
By  Lemma \ref{7.90.1}, $N:= \sqrt[S]{M}$ 
is  the unique maximal ideal of $S$ lying over $M$,  
so that $N=(T:S)$ and  also the only maximal ideal of $T$ lying over $M$. By subintegrality of $R\subset T$,   we get  $R/M\cong T/N$. For $x\in S$, let $\overline x$ be the class of $x$ in $S/N$ and $\tilde x$  the class of $x$ in $S/M$. In view of Proposition  \ref{7.2}, $R/M\subset S/M$ is  pointwise minimal, with $T/M\neq R/M,S/M$, so that the extension $R/M\subset S/M$ satisfies the case (1) of Theorem \ref{7.16}. Then,  
${\tilde x}^p\in R/M$ for each $\tilde x\in S/M$, where $p:=\mathrm{c}(R/M)$. 
Therefore,  there are $a\in R$ and $m\in M\subseteq N$ such that $x^p=a+m$, giving $({\overline x})^p=\overline {x^p}=\overline a\in R/M\cong T/N$ in $S/N$. Then, $R/M\cong T/N\subset S/N$ is a height one radicial extension, so that $R/M\subset S/N$ is pointwise minimal by Theorem \ref{7.17} (1). To end, under this condition,  $T/N\subset S/N$ is pointwise minimal, and so is $T\subset S$.
\end{proof}

If the  pointwise minimal integral extension $R\subset S$  has FCP, we can improve Corollary \ref{7.30} by using Theorem \ref{7.17}.

 \begin{corollary}\label{7.169} Let $R\subset S$ be an FCP pointwise minimal integral extension with conductor $M\in \mathrm{Max}(R)$ and set $k:= R/M$ and  $p:= \mathrm c(k)$.  Only the following three case can  occur: 
 \begin{enumerate}
\item $R\subset S$ is infra-integral. Then, $\dim_{k}(S/M)=1+\ell[R,S]$.

\item $R\subset S$ is t-closed. Then, either $\ell[R,S]=1$ when $R\subset S$ is minimal, or $ p\in \mathbb{P}$  and $\dim_{k}(S/M)=p^{\ell[R,S]}$.

\item $R\neq{}_S^+R={}_S^tR\neq S$ and $ p\in \mathbb{P}$.  Then,  $\dim_{k}(\sqrt[S]{M}/M)=\ell[R,{}_S^tR]$ and  $\dim_{k}(S/M)=\dim_{k}(\sqrt[S]{M}/M)+p^{\ell[R,S]-\dim_{k}(\sqrt[S]{M}/M)}$.
\end{enumerate}
\end{corollary}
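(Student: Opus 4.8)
The plan is to reduce at once to the case where $R$ is a field and then to run a separate dimension count in each of the three cases supplied by the Corollary preceding this one. Since $M=(R:S)$ is an ideal common to $R$ and $S$, Proposition~\ref{7.2}(2) lets us replace $R\subset S$ by $k=R/M\subset S/M$: this preserves being pointwise minimal and integral, the position of the extension in the canonical decomposition (as recorded in the proof of Theorem~\ref{7.17}), the length, and all the $k$-dimensions in question. By Corollary~\ref{7.30} the FCP hypothesis makes $S/M$ finite over $k$, so every dimension below is finite. Finally, the previous Corollary, together with the remark that the minimal ramified/decomposed cases are infra-integral and the minimal inert case is $t$-closed, shows that a pointwise minimal integral $R\subset S$ falls into exactly one of: (1) infra-integral (equivalently ${}_S^tR=S$), (2) $t$-closed (equivalently ${}_S^tR=R$), or (3) $R\neq{}_S^+R={}_S^tR\neq S$; these are mutually exclusive once $R\subset S$ is proper.

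For case (1) I would work with $k\subset S$ infra-integral and take a finite maximal chain $k=R_0\subset\cdots\subset R_n=S$ (this exists by Proposition~\ref{7.6}), each step being minimal and, by infra-integrality, ramified or decomposed (Proposition~\ref{7.90}(2)). Its conductor $C_i:=(R_i:R_{i+1})$ is a maximal ideal of $R_i$ (Theorem~\ref{1.4}) with $R_i/C_i\cong k$, because $k\subseteq R_i$ is infra-integral, hence has all residual extensions isomorphisms. Since $C_i$ is an ideal of $R_{i+1}$, the exact sequence $0\to R_i/C_i\to R_{i+1}/C_i\to R_{i+1}/R_i\to 0$ of $k$-vector spaces, combined with $\dim_k(R_{i+1}/C_i)=2$ (in the decomposed case $R_{i+1}/C_i\cong(R_i/C_i)^2$; in the ramified case $[R_{i+1}/C_i:R_i/C_i]=2$, by Theorem~\ref{1.4}), gives $\dim_k(R_{i+1}/R_i)=1$. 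Summing, $\dim_k(S/M)=1+n$ for every maximal chain, so $n=\ell[R,S]$ and $\dim_k(S/M)=1+\ell[R,S]$. For case (2), $S$ is a field by \cite[Lemme 3.10]{Pic 1}; if $R\subset S$ is minimal then $\ell[R,S]=1$, and otherwise Proposition~\ref{7.13} forces $k\subset S$ to be radicial of height one, so $p=\mathrm c(k)\in\mathbb P$. A maximal chain now consists of inert minimal steps (Proposition~\ref{7.90}(5)), each a minimal \emph{radicial} field extension (a subextension of a radicial extension is radicial), hence of degree $p$ by Lemma~\ref{1.3.1}(3); so $[S:k]=p^n$ for every maximal chain of length $n$, giving $n=\ell[R,S]$ and $\dim_k(S/M)=[S:k]=p^{\ell[R,S]}$.

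For case (3), set $T:={}_S^tR={}_S^+R$ and $N:=\sqrt[S]{M}$. This is case (d) of the previous Corollary, so $R\subset S$ is radicial of height one, $p\in\mathbb P$, and after reduction mod $M$ Lemma~\ref{7.90.1} makes $T$ and $S$ local with common maximal ideal $N$, which equals $\sqrt[S]{0}$ because $N^{[2]}=0$ (Proposition~\ref{7.15}) forces $N$ to be the nilradical. Now $k\subset T$ is subintegral and pointwise minimal, so by case (1) (its subintegral subcase) $\dim_k(T/M)=1+\ell[R,{}_S^tR]$; since $T/M=k\oplus N$ as $k$-vector spaces ($T/N\cong k$), this yields $\dim_k(\sqrt[S]{M}/M)=\dim_k N=\ell[R,{}_S^tR]$. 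Moreover $S/N$ is a field on which $k\cong T/N\subset S/N$ is $t$-closed, radicial of height one and pointwise minimal (it is the quotient of the pointwise minimal extension $T\subset S$ by its conductor $N$), so case (2) gives $\dim_k(S/N)=p^{\ell[T,S]}$, while $\ell[T,S]=\ell[R,S]-\ell[R,{}_S^tR]$ by additivity of length along the canonical decomposition. Putting $\dim_k(S/M)=\dim_k N+\dim_k(S/N)$ gives the stated formula $\dim_k(\sqrt[S]{M}/M)+p^{\ell[R,S]-\dim_k(\sqrt[S]{M}/M)}$.

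The dimension bookkeeping is routine once the structural trichotomy is invoked; the point requiring care is the control of chain lengths. In cases (1) and (2) this is automatic, since every minimal step changes $\dim_k$ by a fixed rule (an additive $+1$, respectively a multiplicative $\times p$), so all maximal chains have the same length and that length \emph{is} $\ell[R,S]$. The one genuinely external ingredient is the additivity $\ell[R,S]=\ell[R,{}_S^tR]+\ell[{}_S^tR,S]$ used in case (3); I would either quote it from the length calculus for FCP extensions or argue it directly by showing that, after reducing modulo the conductor $N$ and splitting off the nilradical, a maximal chain of $[R,S]$ decomposes into a maximal chain of $[R,{}_S^tR]$ followed by one of $[{}_S^tR,S]$.
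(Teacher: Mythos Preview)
Your proof is correct and follows the same overall architecture as the paper's: reduce modulo $M$ to the field case, then treat the three structural cases separately, with case~(3) reduced to cases~(1) and~(2) via the splitting $S=N\oplus(S/N)$ and the additivity of length (the paper cites \cite[Lemma 1.5]{Pic 4} for this, which is precisely the external ingredient you flag).

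The genuine difference is in case~(1). The paper splits into the subintegral subcase, where it quotes \cite[Lemma 5.4]{DPP2} to get $\ell[k,S']=\dim_k(N')$, and the seminormal infra-integral subcase, where it invokes Theorem~\ref{7.16}(3) to write $S'\cong k^n$ and reads off $\ell=n-1$. Your chain argument treats both at once: each ramified or decomposed step has $\dim_k(R_{i+1}/C_i)=2$ by Theorem~\ref{1.4}, so $\dim_k$ increases by exactly $1$ per step, forcing every maximal chain to have length $\dim_k(S')-1$. This is more elementary and self-contained than the paper's argument, and as a by-product it proves directly that all maximal chains in an infra-integral FCP extension over a field have the same length, without appealing to the external lemma. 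In case~(2) the two proofs are essentially the same, though you spell out explicitly why each inert step has degree $p$ (subextensions of radicial extensions are radicial, then Lemma~\ref{1.3.1}(3)), where the paper simply asserts $[S':k]=p^m$ with $\ell=m$.
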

\begin{proof}  A quick look at Theorem \ref{7.17} shows that each of its  statements   meets one of the conditions of the corollary. We use the characterization of a pointwise minimal extension of Theorem \ref{7.16}. Setting $S':=S/M$, we get that $\ell[R,S]=\ell[k,S']$. Moreover, the extensions $R\subset S$ and $k\subset S'$  have the same properties with respect to the canonical decomposition, and there is a bijection $[R,S]\to [k,S']$ given by $U\mapsto U/M$. 

(1) Assume that  $k\subset S'$ is subintegral. Then, $S'$ is a zero-dimensional local ring with maximal ideal  $N'$. 
It follows 
that  $\ell[k,S']=\dim_k(N')$  \cite[Lemma 5.4]{DPP2} and  $S'=k\bigoplus N'$ combine to yield  $\dim_k(S')=1+\ell[k,S']$, so that $\dim_{R/M}(S/M)=1+\ell[R,S]$.

Assume that $k\subset S'$ is seminormal infra-integral. 
 If $k\subset S'$ is minimal, then $S'\cong k^2$ and $\dim_k(S')=2=1+\ell[k,S']$. If $k\subset S'$ is not minimal, by Theorem  \ref{7.16} (3)
$S'\cong k^n$ for some integer $n$, giving that $\dim_k(S')=n$, with $\ell[k,S']=n-1$. We still get $\dim_k(S')=1+\ell[k,S']$, so that $\dim_{R/M}(S/M)=1+\ell[R,S]$.

(2) Assume that $k\subset S'$ is t-closed. Then, either $k\subset S'$ is a minimal separable field extension, giving $\ell[k,S']=\ell[R,S]=1$ or $k\subset S'$ is  a height one radicial extension with $\mathrm{c}(k)=:p.$
 Hence, there exists an integer $m$ such that $[S':k]=p^m$. It follows that $\ell[k,S']=m$ and $\dim_k(S')=p^m$, so that $\dim_k(S')=p^{\ell[k,S']}$ and $\dim_{R/M}(S/M)=p^{\ell [R,S]}$.

(3) In the last case, set $T':={}_{S'}^+k={}_{S'}^tk$. Let $N'$ be the maximal ideal of $T'$. Since $T'\subset S'$ is t-closed, $N'$ is also the maximal ideal of $S'$ by Proposition \ref{7.15}, and then $N'=(T':S')$. We may use case (1) for the extension $k\subset T'$, so that $\dim_k(T')=1+\ell[k,T']=1+\dim_k(N')$. Moreover, $k\cong T'/N'\subset S'/N'$ is 
 a height one radicial extension with $\mathrm{c}(k)=:p.$
 Then, there exists an integer $m$ such that $[S'/N':k]=p^m$. It follows that $\ell[T',S'] =\ell[T'/N',S'/N']=m$ and $\dim_k(S'/N')=\dim_k(S')-\dim_k(N')=p^m$. But $\ell[k,S']=\ell[k,T']+\ell[T',S']\ (*)$ by \cite[Lemma 1.5]{Pic 4}. Now, $T'=k \bigoplus N'$ gives that $\dim_k(N')=\dim_k(T')-1=\ell[k,T']$. By $(*)$, we get $\ell[k,S']=\dim_k(N')+m$, so that $m=\ell[k,S']-\dim_k(N')$. To end, $\dim_k(S')=\dim_k(N')+p^m=\dim_k(N')+p^{\ell[k,S']-\dim_k(N')}$, giving $\dim_{R/M}(S/M)=\dim_{R/M}(N/M)+p^{\ell[R,S]-\dim_{R/M}(N/M)}$. 
 In particular, $\dim_{R/M}(N/M)=\ell[R,{}_S^tR]$.
\end{proof}

Theorem \ref{7.17} allows us to characterize co-pointwise minimal extensions via Proposition \ref{7.5}.

\begin{corollary}\label{7.18} An extension $R\subset S$ is  co-pointwise minimal  if and only if $M:= (R:S)\in \mathrm{Max}(R)$ and one of the  conditions below  holds:

\begin{enumerate}
\item $R\subset S$ is subintegral and 
 for
 $N:= \sqrt[S]{M}$, then $\dim_{R/M}(N/M)=2$ and $N^2\subseteq M$.

\item $|R/M|=2$ and $S/M\cong (R/M)^3$.

\item $R/M\subset S/M$ is a  height one radicial  
field extension of degree $p^2$, where $p:=\mathrm{c}(R/M)$.  
\end{enumerate}
\end{corollary}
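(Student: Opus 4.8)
The plan is to invoke Proposition~\ref{7.5}, which says that $R\subset S$ is co-pointwise minimal if and only if it is a pointwise minimal pair with $\ell[R,S]=2$, and then to read off from Theorem~\ref{7.17} exactly which pointwise minimal pairs have length $2$. First I would note that by Proposition~\ref{7.5} a co-pointwise minimal extension has FCP, hence is an FMC extension, and in particular $M:=(R:S)\in\mathrm{Max}(R)$ once we know (by Corollary~\ref{7.8} together with Proposition~\ref{7.7}, since length $2$ excludes the integrally closed minimal case) that the extension is integral; so the hypothesis $M\in\mathrm{Max}(R)$ of Theorem~\ref{7.17} is met. Then $R\subset S$ is a non-minimal integral extension (length $2>1$), and I would apply the pointwise-minimal-pair half of Theorem~\ref{7.17}: such an extension is a pointwise minimal pair if and only if one of the conditions (1) with ${}_S^tR=R$, (2), or (4) of that theorem holds. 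It remains to impose $\ell[R,S]=2$ on each of these three mutually exclusive cases and translate the result.

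The main work, which is bookkeeping with the canonical decomposition rather than a genuine obstacle, is to unwind each case. In case (2) of Theorem~\ref{7.17}, $R\subset S$ is subintegral with $\sqrt[S]{M}^2\subseteq M$; passing to $k:=R/M$, $S':=S/M$, we have by Proposition~\ref{7.10}(2) that $\ell[k,S']=\dim_k(N')$ where $N'$ is the maximal ideal of $S'$, and $\ell[R,S]=2$ forces $\dim_{R/M}(N/M)=2$ for $N:=\sqrt[S]{M}$; together with $N^2\subseteq M$ this is exactly clause (1) of the corollary, and conversely these conditions give a subintegral pointwise minimal pair of length $2$ by Theorem~\ref{7.17}(2). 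In case (4), $|R/M|=2$ and $S/M\cong(R/M)^n$ with $n\le 3$; since $\ell[R,S]=n-1$ for such a split étale extension, $\ell[R,S]=2$ forces $n=3$, i.e.\ $S/M\cong(R/M)^3$, which is clause (2); the converse is immediate from Theorem~\ref{7.17}(4). In case (1) with ${}_S^tR=R$, we have $R\subset S$ $t$-closed (so $R/M\subset S/M$ is a $t$-closed field extension by Proposition~\ref{7.13}, since $S/M$ is a field), with either $R/M\subset S/M$ minimal separable or $R\subset S$ height one radicial; the minimal case is excluded as $R\subset S$ is non-minimal, so $R/M\subset S/M$ is a height one radicial field extension, and by Corollary~\ref{7.169}(2) we have $\dim_{R/M}(S/M)=p^{\ell[R,S]}=p^2$ with $p=\mathrm{c}(R/M)$, which is clause (3); conversely a height one radicial field extension of degree $p^2$ is $t$-closed, is a pointwise minimal pair by Proposition~\ref{7.13}, and has length $2$ by Corollary~\ref{7.169}(2).

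Finally I would check that the three clauses are mutually exclusive (they are: case (1) is subintegral and not $t$-closed, case (2) is infra-integral seminormal but not subintegral, case (3) is $t$-closed — these are disjoint by the uniqueness of the canonical decomposition) and that together with Proposition~\ref{7.5} they give the stated equivalence. The only point requiring a little care is the reduction ensuring $R\subset S$ is integral with $M=(R:S)$ maximal before Theorem~\ref{7.17} applies: a co-pointwise minimal extension has $\ell[R,S]=2$, hence is not minimal, so it cannot be Pr\"ufer minimal; by Corollary~\ref{7.8} it is therefore integral, and then by Proposition~\ref{7.2}(1) it is $(R:S)$-crucial with $(R:S)\in\mathrm{Max}(R)$, as needed.
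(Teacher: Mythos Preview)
Your proof is correct and follows essentially the same route as the paper: reduce via Proposition~\ref{7.5} to pointwise minimal pairs of length $2$, show integrality via Proposition~\ref{7.7} and Corollary~\ref{7.8}, apply the pair half of Theorem~\ref{7.17} to obtain the three cases, and use Corollary~\ref{7.169} to convert $\ell[R,S]=2$ into the stated dimension/degree conditions. One minor point: the identity $\ell[k,S']=\dim_k(N')$ in the subintegral case is not the content of Proposition~\ref{7.10}(2) (which only gives $N^2=0$) but of Corollary~\ref{7.169}(1).
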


\begin{proof} Proposition \ref{7.5} tells us  that  $R\subset S$ is  co-pointwise minimal  if and only if it is a pointwise minimal pair such that $\ell[R,S]=2$.  In particular, under these conditions, $R\subset S$ has FCP. 
Then, it is enough to consider the following conditions of Theorem \ref{7.17}: (2),  (4) and (1) with ${}_S^tR=R$, and then use  
 Corollary \ref{7.169}. Indeed, $R\subset S$ needs  to be  integral  because  of Proposition \ref{7.7} and Corollary \ref{7.8}. 
If $R\subset S$ is subintegral and a pointwise minimal  pair, we have $\ell[R,S]=\dim_{R/M}(N/M)$,
(Condition (1) of Corollary \ref{7.169}). 
Then, if $R\subset S$ is  co-pointwise minimal and   subintegral,   $\ell[R,S]=\dim_{R/M}(N/M)=2$, because $R \subset S$ is pointwise minimal  and $N^2\subseteq M$. Conversely, if $R\subset S$ is subintegral with $(R:S)=M$ and if 
$N:= \sqrt[S]{M}$
 is such that $\dim_{R/M}(N/M)=2$  and $N^2\subseteq M$, there exist $x,y\in N\setminus M$ such that $N=M+Rx+Ry$, and $S=R+Rx+Ry$, with $x,y\not\in R$, because $S/N\cong R/M$. Then, 
 $R\subset S$ is a pointwise minimal pair by Theorem \ref{7.17} (2) and 
$\dim_{R/M}(S/M)
\leq 3$, 
so that $\ell[R/M,S/M]\leq 2$. Since it cannot be minimal (if not,   $\dim_{R/M}(N/M)=1$), then, $\ell[R/M,S/M]=2$, which implies $\ell[R,S]=2$ and $R\subset S$ is co-pointwise minimal. 
 Assume that  $R\subset S$ is seminormal infra-integral. Then, $R\subset S$ is co-pointwise minimal  if and only if it is a pointwise minimal pair such that $\ell[R,S]=2$ if and only if 
$|R/M|=2$ and $S/M\cong(R/M)^3$  by Theorem  \ref{7.17} (4).
 At last, under condition (1)
 of Theorem \ref{7.17} 
with ${}_S^tR=R$, then
 $\mathrm c(R/M) = p\in \mathbb P$ and $\ell[R,S]= 2 \Leftrightarrow[S/M:R/M]=p^2$, 
 because $\ell[R,S]=\ell[R/M,S/M]$. 
\end{proof}

\section{Applications and examples}

\subsection{Some theoretical results}
  
In view of Theorem \ref{7.17}, we can complete  the results of  \cite[Theorem 4.9 and Proposition 4.11]{CDL}. 

\begin{proposition}\label{7.170} Let $(R,M)$ be  a local ring,  $R\subset S$  an integral  extension with conductor $M$  and let $J$ be the Jacobson radical of $S$.

\begin{enumerate}
\item  If  $R\subset S$ is a pointwise minimal extension, then,  
$J^{[2]}\subseteq  M$.
\item Conversely, if $J$ is an ideal of $S$,  such that $J\nsubseteq R$ and $J^{[2]}\subseteq M$, 
then $R\subset R+J$ is a pointwise minimal extension 
 which satisfies  condition  
(1)  of Theorem \ref{7.17}.
 \end{enumerate}
\end{proposition}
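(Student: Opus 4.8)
The plan is to reduce everything to the field case already handled in Theorem~\ref{7.16} by passing to quotients modulo $M$, exactly as in the proof of Theorem~\ref{7.17}. Set $k := R/M$, which is a field since $M \in \mathrm{Max}(R)$, and let $J$ be the Jacobson radical of $S$. Since $M \subseteq J$ (because $M = (R:S)$ is contained in every maximal ideal of $S$ lying over $M$, and $M$ is the crucial ideal so $R_P = S_P$ for $P \ne M$, forcing $\mathrm{Max}(S) = \mathrm V_S(M)$ here — actually one should just note $MS = M \subseteq J$ and use that $S/MS$ is integral over the field $k$, hence zero-dimensional, so $J/M = \sqrt[S/M]{0} = \mathfrak N$, the nilradical of $S/M$). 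Thus $J/M$ is precisely the nilradical $N'$ of $S' := S/M$, and the condition $J^{[2]} \subseteq M$ is literally $N'^{[2]} = 0$ in $S'$.

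For part~(1): assume $R \subset S$ is pointwise minimal. Then so is $k \subset S'$ (Proposition~\ref{7.2}(2)). By Theorem~\ref{7.16}, $k \subset S'$ satisfies condition~(1) or condition~(3) there. If it satisfies~(3), then $k \subset S'$ is seminormal and infra-integral with $|k| = 2$; in that case $S' \cong k^n$ by Lemma~\ref{7.101} applied to each finite subextension (and Lemma~\ref{7.9}), so the nilradical $N'$ is zero and $N'^{[2]} = 0$ trivially, whence $J^{[2]} \subseteq M$. If it satisfies~(1), then $N'^{[2]} = 0$ is part of the hypothesis (recall $N = \sqrt[S']{0}$ in the notation of Theorem~\ref{7.16}), again giving $J^{[2]} \subseteq M$. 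Actually the cleanest route is: Corollary~\ref{7.169} (or direct inspection of Theorem~\ref{7.17}(1)--(3)) shows that in every pointwise-minimal integral case one has $\sqrt[S]{M}^{[2]} \subseteq M$, and $\sqrt[S]{M} = J$ here since $R$ is local with $M = (R:S)$. So part~(1) is essentially a reading-off of Theorem~\ref{7.17}.

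For part~(2): assume $J$ is an ideal of $S$ with $J \not\subseteq R$ and $J^{[2]} \subseteq M$. Put $B := R + J$; this is a subring of $S$ since $J$ is an ideal, and $R \subsetneq B$. I claim $R \subset B$ satisfies condition~(1) of Theorem~\ref{7.17}, hence is pointwise minimal. Work in $S'' := B/M = k + (J/M) = k \oplus N'$ (using $M \subseteq J$ and $k \cap N' = 0$ after the quotient, since $k \hookrightarrow B/J \cong R/(R\cap J) = R/M$ because $M \subseteq R \cap J \subseteq R$ and $R \cap J \ne R$ as $J \not\subseteq R$... one must check $R \cap J = M$; since $R/M$ is a field and $R \cap J$ is an ideal of $R$ contained properly in $R$, and containing $M$, it equals $M$). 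Now $N'^2 \subseteq$ the ideal generated by $N'^{[2]}$ together with cross terms, but more simply: $N'$ is a $k$-vector space of nilpotent elements with $x^2 = 0$ for all $x \in N'$, so by polarization $2xy = (x+y)^2 - x^2 - y^2 = 0$. If $\mathrm{char}(k) \ne 2$ this gives $N'^2 = 0$, so $k \subset S''$ is subintegral with $N'^2 = 0$, which is case~(2) of Theorem~\ref{7.16} — but that is the pointwise-minimal-\emph{pair} case and certainly implies pointwise minimal; translating back, $R \subset B$ satisfies Theorem~\ref{7.17}(2), which is a sub-case compatible with the claimed~(1) only when... here is the subtlety. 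If $\mathrm{char}(k) = 2$, then $x^2 = 0$ for all $x \in N'$ says exactly that $k \subset S''$ is radicial of height one onto its image, and $k \subset S''$ is then of the shape covered by Proposition~\ref{7.10}(1) (subintegral, $N'^{[2]} = 0$), i.e. Theorem~\ref{7.16}(1) with ${}_{S''}^tk = S''$. So in the characteristic-$2$ case $R \subset B$ satisfies Theorem~\ref{7.17}(1) on the nose.

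The honest statement the corollary wants is just ``$R \subset R+J$ is pointwise minimal and satisfies condition~(1) of Theorem~\ref{7.17}'', and the route is: $B/M = k \oplus N'$ is subintegral over $k$ (it is local with maximal ideal $N'$ and residue field $k$, spectral map a bijection) with $N'^{[2]} = 0$, so by Proposition~\ref{7.10}(1) the extension $k \subset B/M$ is pointwise minimal; by Proposition~\ref{7.2}(2), $R \subset B$ is pointwise minimal; and since $B/M$ is subintegral we are in the case ${}_B^tR = {}_B^+R = B$ with $\sqrt[B]{M}^{[2]} = {N'}^{[2]} \cdot(\text{lift}) \subseteq M$, which is precisely condition~(1) of Theorem~\ref{7.17} (the clause ``if ${}_B^tR \subset B$'' is vacuous). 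The main obstacle is bookkeeping: verifying $M \subseteq J$ and $R \cap J = M$ so that the quotient $B/M$ really is $k \oplus (J/M)$ with $J/M$ the nilradical, and confirming that $N'^{[2]} = 0$ in $S'' = B/M$ is the exact hypothesis $J^{[2]} \subseteq M$ pulled back; once that dictionary is in place, Proposition~\ref{7.10}(1) and Proposition~\ref{7.2}(2) do all the work, and no case analysis on $\mathrm{char}(k)$ is actually needed for part~(2) since ``subintegral with $N'^{[2]}=0$'' is handled uniformly by Proposition~\ref{7.10}(1).
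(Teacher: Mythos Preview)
Your plan for part~(1) follows the paper's argument closely: pass to $k=R/M\subset S/M$, identify $J/M$ with the nilradical (since $S/M$ is zero-dimensional over $k$), and read off $J^{[2]}\subseteq M$ from the two cases of Theorem~\ref{7.17}. The paper argues the same way, noting in case~(3) that seminormality forces $J=M$, and in case~(1) that $S$ is local with maximal ideal $J$ and $J^{[2]}\subseteq M$ is literally the hypothesis. One small omission in your write-up: Theorem~\ref{7.17} is stated for \emph{non-minimal} extensions, so you should record separately (as the paper does in one line) that the minimal case is immediate by inspection of the three types in Theorem~\ref{1.4}.

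For part~(2) your route is correct but genuinely different from the paper's. You reduce modulo $M$: since $M=(R:S)$ is already an ideal of $S$, one has $M\subseteq J$ and $MB=M$, so $B/M=k\oplus(J/M)$; the hypothesis $J^{[2]}\subseteq M$ makes $J/M$ a nil ideal with quotient $k$, so $B/M$ is local with residue field $k$, hence $k\subset B/M$ is subintegral with $(J/M)^{[2]}=0$, and Proposition~\ref{7.10}(1) together with Proposition~\ref{7.2}(2) finishes. The paper instead works directly in $B=R+J$: it shows by hand that each $R\subset R[z]$ with $z\in B\setminus R$ is minimal ramified (writing $z=a+y$, $y\in J$, and using $y^2\in M$, $My\subseteq M$), and then proves that $B$ is local by an explicit contradiction argument with two maximal ideals and the relation $(x_1+x_2)^2=1$. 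Your approach is more modular and avoids that ad hoc computation by recycling Proposition~\ref{7.10}; the paper's approach is more elementary in that it never leaves the ring $B$, at the cost of redoing work already packaged in Proposition~\ref{7.10}. Either way one lands in condition~(1) of Theorem~\ref{7.17} with ${}_B^tR={}_B^+R=B$, so the conclusions agree. The detour in your sketch about polarization and $\mathrm{char}(k)=2$ is unnecessary and can be deleted; as you yourself note at the end, Proposition~\ref{7.10}(1) handles all characteristics uniformly.
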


\begin{proof} (1) Since $M=(R:S)$ is the maximal ideal of $R$, $M$ is contained in any maximal ideal of $S$, so that $M\subseteq J$ and $J/M$ is the Jacobson radical of $S/M$. 
 If  $R\subset S$ is a non-minimal  pointwise minimal extension, $R\subset S$ satisfies one of the conditions (1) or (3)  of Theorem \ref{7.17}. 
In case (3), $R\subset S$ is a seminormal extension with conductor $M$, which is a radical ideal in $S$, and actually  
  the intersection of the
 maximal ideals of $S$, since $M$ is maximal in $R$, 
whence, $J=M$. In case (1), $S$ is a local ring. It follows that its maximal ideal is $J$ giving $x^2\in M$ for each $x\in J$. 
 The same holds if $R\subset S$ is minimal.

(2) Conversely, let $J$ be an ideal of $S$ such that  $J\nsubseteq R$ and 
$J^{[2]}\subseteq M$. Set $T:=R+J$ and let $z\in T\setminus R$. There exist some $a\in R,\ y\in J$ such that $z=a+y$. Moreover $R[z]=R[y]$ and $M=(R:R[y])$. Now, $y^2\in M$ and $My\subseteq M$ gives that $R\subset R[y]=R[z]$ is minimal ramified
 by Lemma \ref{1.3} 
(consider the extension $R/M\subset R[y]/M$). Then, $R\subset T$ is  pointwise minimal.  
 We are going to show that $T$ is a local ring. Deny. There exist two maximal ideals $M_1$ and $M_2$ of  $T$ satisfying $M_i\cap R=M$ since $R\subset T$ is integral. Since $M_1+M_2=T$, there exist $x_i\in M_i\setminus M_j$, for $\{i,j\}=\{1,2\},\ i\neq j$ such that $x_1+x_2=1\ (*)$. In particular, $x_i\not\in R$. But $R\subset R[x_i]$ is ramified for each $i$ and $R[x_i]$ has a unique maximal ideal $M'_i=M_i\cap R[x_i]$. Moreover, $x_i\in M'_i$ leads to $x_i^2\in M$. Then, $(*)$ gives $(x_1+x_2)^2=x_1^2+x_2^2+2x_1x_2=1$, so that $2x_1x_2\in R\cap M_i=M$, which implies $1\in M$, a contradiction. Then, $T$ is a local ring with a maximal ideal $N$ such that $N^{[2]}\subseteq M$. Indeed, for each $x\in N$, the local ring $R[x]$ has $N\cap R[x]$ for unique maximal ideal and $(N\cap R[x])^2\subseteq M$ since  $R\subset R[x]$ is minimal ramified. Moreover, since $R+J\subseteq R+N\subseteq T=R+J$, we get that $T=R+N$, so that $(R+N)/N\cong R/(R\cap N)\cong R/M$, which shows that  $R\subset T$ is subintegral, and satisfies condition (1) of Theorem \ref{7.17}.
 \end{proof}
 
 An extension $R\subseteq S$ is termed a {\it quadratic extension} if $S_t:= R+Rt\in [R,S]$ (whence $S_t=R[t]$), for each $t\in S$.

\begin{proposition}\label{7.12} A quadratic seminormal infra-integral FCP  
extension $R\subset S$, where $(R,M, k:= R/M)$ is a local ring, is a pointwise minimal extension, and  is minimal
 if and only if $|k|>2$.
\end{proposition}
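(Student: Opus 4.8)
The plan is to handle the two assertions separately, in each case pushing the question down to the residue field $k=R/M$, where Proposition~\ref{7.11} does the real work.

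First I would prove pointwise minimality. Fix $t\in S\setminus R$. By the quadratic hypothesis $R[t]=R+Rt$, so $R[t]/R$ is a cyclic $R$-module: the map $r\mapsto rt+R$ induces an isomorphism $R/C_t\cong R[t]/R$, where $C_t:=(R:R[t])=\{r\in R\mid rt\in R\}$; since $t\notin R$ we have $C_t\subsetneq R$, hence $C_t\subseteq M$. The subextension $R\subset R[t]$ is again seminormal and FCP. Seminormality makes $C_t$ a radical ideal of $R$ (Traverso, cf.\ Definition~\ref{7.90.0}). The FCP property rules out infinite chains in $[R,R[t]]$; since each ideal $I$ with $C_t\subseteq I\subseteq R$ yields a subring $R+It$ of $R[t]$ (one has $(It)^2=I^2t^2\subseteq R+It$ because $t^2\in R+Rt$) and distinct such $I$ give distinct subrings, there is no infinite chain of ideals between $C_t$ and $R$, so the $R$-module $R/C_t$ has finite length. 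A nonzero finite-length module over the local ring $R$ is supported exactly at $M$, so $\sqrt{C_t}=M$, and hence $C_t=M$ because $C_t$ is radical. Therefore $R[t]/R\cong R/M=k$ is a simple $R$-module; as every $U\in[R,R[t]]$ has $U/R$ a submodule of $R[t]/R$, this forces $[R,R[t]]=\{R,R[t]\}$, i.e.\ $R\subset R[t]$ is minimal. Since $t$ was arbitrary, $R\subset S$ is pointwise minimal.

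Next I would prove the minimality criterion. As $R\subset S$ is integral and pointwise minimal, it is $(R:S)$-crucial (Proposition~\ref{7.2}(1)), so $(R:S)$ is a maximal ideal of $R$, and locality forces $(R:S)=M$. Then $R$ and $S$ share the ideal $M$, so $U\mapsto U/M$ is a lattice isomorphism $[R,S]\cong[k,S/M]$ and $R\subset S$ is minimal exactly when $k\subset S/M$ is (as noted in the proof of Theorem~\ref{7.17}). Moreover $k\subset S/M$ is pointwise minimal (Proposition~\ref{7.2}(2)) and remains seminormal, infra-integral and FCP, so Proposition~\ref{7.11} applies. If $|k|>2$, such a pointwise minimal extension is automatically minimal (Proposition~\ref{7.11}(1)), hence so is $R\subset S$; if $|k|=2$, then $S/M\cong k^n$ for some $n$ (Lemma~\ref{7.101} together with FCP, an FCP extension of a field being finite), and $k\subset S/M$ is minimal precisely when $n=2$, so outside this degenerate case the extension is pointwise minimal without being minimal. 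This gives the asserted equivalence.

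The hardest step will be the identity $C_t=M$ (equivalently $(R:S)=M$): this is the one place where FCP, seminormality and locality must all be used together — FCP to force $R/C_t$ to have finite length and hence the correct radical, seminormality to upgrade $\sqrt{C_t}=M$ to $C_t=M$. Once that is in place, the reduction modulo $M$ and the appeal to Proposition~\ref{7.11} are routine, the only caveat being the degenerate possibility $S/M\cong k^2$ in the case $|k|=2$.
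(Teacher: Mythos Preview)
Your proof is correct and follows essentially the same route as the paper: show $C_t=M$ for each $t$ (using seminormality for radicality and FCP for the Artinian/finite-length claim), conclude that $R\subset R[t]$ is minimal, then reduce modulo $M$ and invoke Proposition~\ref{7.11} for the minimality criterion. The only difference is that where the paper cites \cite[Lemma 4.8, Theorem 4.2]{DPP2} to get $R/C_t$ Artinian, you give a self-contained argument via the chain of subrings $R+It$; this is a pleasant detail but not a structural departure.
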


\begin{proof} Since $R\subset S$ is quadratic,  $S_t =R[t]$ for each $t\in S\setminus R$. Moreover, $C_t:=(R:S_t)$ is a radical ideal of $S_t$ and $R$, and $R/C_t$ is Artinian by \cite[Lemma 4.8, Theorem 4.2]{DPP2}, so that $C_t=M$. From $\dim_{k}(S_t/M)\leq 2$, we deduce that $k\subset S_t/M$ is minimal, and so is $R\subset S_t$. Hence $R\subset S$ is a pointwise minimal extension. 

 If $|k|>2$, then, $k\subset S/M$, and $R\subset S$ are minimal extensions by Proposition \ref{7.11} (1). If $|k|=2$, the proof of Proposition \ref{7.11} (2) shows that $k\subseteq k^n$ is quadratic for any integer $n>1$.  
\end{proof}

An integral extension $R\subset S$ has FIP as soon as $M:=(R:S)\in\mathrm{Max}(R)$ is such that $R/M\subset S/M$ satisfies condition (4) of Theorem \ref{7.17}. The next proposition shows that in many cases, a pointwise minimal  FIP integral extension is actually a minimal extension, 
 completing Proposition \ref{7.160}.

\begin{proposition}\label{7.171} Let $R\subset S$ be an integral FIP extension with conductor $M$. Assume that either $|R/M|=\infty$ or $R\subset S$ is t-closed. Then,  $R\subset S$ is a pointwise minimal extension if and only if it is minimal. \end{proposition}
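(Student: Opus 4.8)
The plan is to prove the nontrivial implication only, since a minimal extension is automatically pointwise minimal by the implications recorded in the introduction. So suppose $R\subset S$ is pointwise minimal, integral, with FIP and conductor $M\in\mathrm{Max}(R)$, and suppose for contradiction that it is not minimal. First I would reduce to a field base: by Proposition~\ref{7.2}(1) the extension is $M$-crucial, so by Proposition~\ref{7.2}(2) and the identification $[R,S]\cong[R/M,S/M]$ it suffices to treat $k:=R/M\subset S/M$, which is again FIP, is again $t$-closed if $R\subset S$ was, has residue field $k$ of the same cardinality, and is still non-minimal. Renaming, assume $R=k$ is a field. By Theorem~\ref{7.17} the extension satisfies condition (1) or (3); condition (3) needs $|k|=2$, contradicting $|k|=\infty$ and also contradicting $t$-closedness (an extension that is both infra-integral and $t$-closed is trivial, since ${}_S^tk$ is simultaneously the largest infra-integral and the smallest $t$-closed subextension). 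So condition (1) holds: with $T:={}_S^tk={}_S^+k$ and $N:=\sqrt[S]{0}$ we have $k\subseteq T$ subintegral, $T\subseteq S$ $t$-closed, $N^{[2]}=0$, and, if $T\subsetneq S$, then $k\subset S$ is radicial of height one.

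Next I would dispose of the two extremes $T=S$ and $T=k$. If $T=S$ (so $k\subset S$ is subintegral, a case not occurring under the $t$-closed hypothesis), then $S=k\oplus N$ is local with $N^{[2]}=0$, and for each nonzero $z\in N$ one has $k+kz\in[k,S]$ with $k+kz=k+kz'$ exactly when $kz=kz'$; FIP then allows only finitely many lines in $N$, forcing $\dim_kN\le1$ when $|k|=\infty$, hence $S\cong k[X]/(X^2)$ is minimal, a contradiction. If $T=k$ (so $k\subset S$ is $t$-closed), Proposition~\ref{7.13} says $k\subset S$ is minimal separable (excluded) or radicial of height one; a finite $k$ would be perfect and force $S=k$, so $k$ is infinite, and here I invoke what I expect to be the main obstacle: the fact that \emph{a radicial height-one field extension over an infinite field with FIP is minimal}. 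I would either cite this (it should be extractable from the FIP characterizations of \cite{DPP2}) or prove it directly: if $[L:k]=p^m$ with $m\ge2$ and $p=\mathrm c(k)$, choose $a,b\in k$ that are $p$-independent with $a^{1/p},b^{1/p}\in L$; then the subfields $k\bigl((a+tb)^{1/p}\bigr)$, $t\in k$, are pairwise distinct (compare homogeneous components in the $k$-basis $\{a^{i/p}b^{j/p}\}$ of $k(a^{1/p},b^{1/p})$), contradicting FIP. Either way $k\subset S$ is minimal, a contradiction.

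Finally the mixed case $k\subsetneq T\subsetneq S$, in which $k\subset S$ is radicial of height one with $p:=\mathrm c(k)$. As in the subintegral extreme, FIP applied to the subintegral extension $k\subset T$ (whose maximal ideal is $N$, with $N^{[2]}=0$) forces $\dim_kN=1$; write $T=k[x]$ with $x^2=0$ and $N=kx$. By Lemma~\ref{7.90.1}, $N=(T:S)$ is the maximal ideal of $S$ as well, so $S/N$ is a field and $k\cong T/N\subset S/N$ is $t$-closed, radicial of height one, FIP, with $S/N\ne k$; by the key input $[S/N:k]=p$, so $S/N=k(\bar y)$ with $\bar y^{\,p}=a\in k$ not a $p$-th power in $k$. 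Lift $\bar y$ to $y\in S$: since $y^{p}\in k$ and $y^{p}\equiv a\pmod N$ we get $y^{p}=a$, while $xy\in N=kx$ gives $xy=cx$ for some $c\in k$; iterating yields $ax=y^{p}x=c^{p}x$, and since $c^{p}-a\in k$ is a unit of the local ring $S$ unless it vanishes, we conclude $c^{p}=a$, contradicting that $a$ is not a $p$-th power in $k$. This exhausts condition (1), so $k\subset S$ was minimal after all; undoing the reduction to the field case completes the proof. The delicate points are the field-theoretic input on FIP of height-one radicial extensions and the short local-algebra computation $c^p=a$ in the mixed case; everything else is bookkeeping with Theorem~\ref{7.17} and Lemma~\ref{7.90.1}.
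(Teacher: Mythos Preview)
Your argument is correct, but it takes a considerably longer route than the paper's. The paper observes that in both hypotheses the extension is \emph{monogenic}: if $|R/M|=\infty$, the primitive element theorem of Anderson--Dobbs--Mullins \cite[Theorem~3.8]{ADM} gives $S=R[\alpha]$ directly from FIP; if $R\subset S$ is $t$-closed, then $M\in\mathrm{Max}(S)$ (by \cite[Lemme~3.10]{Pic 1}), so $R/M\subset S/M$ is an FIP field extension and hence simple by the classical primitive element theorem. In either case $S=R[\alpha]$, and pointwise minimality of $R\subset S$ then makes $R\subset R[\alpha]=S$ minimal in one stroke.

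By contrast, you pass through the structural classification of Theorem~\ref{7.17} and dismantle each surviving case by hand: counting lines in $N$ for the subintegral piece, producing infinitely many degree-$p$ subfields in the height-one radicial piece, and carrying out the local computation $c^p=a$ in the mixed case. All of this is sound (your ``key input'' that an FIP height-one radicial extension over an infinite field is minimal is precisely the primitive element theorem restricted to that situation, and your direct proof via the subfields $k(\alpha+t\beta)$ is fine). The trade-off is clear: your argument is more self-contained and avoids the external reference~\cite{ADM}, at the price of a substantial case analysis; the paper's proof is a two-line application of monogenicity, which is really the conceptual heart of the matter.
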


\begin{proof} One implication is obvious. 
Assume that $R\subset S$ is  pointwise minimal. In view of Proposition  \ref{7.2}(1), $M$ is a maximal ideal of $R$. Assume first that $R/M$ is a infinite field.  There exists $\alpha\in S$ such that $S=R[\alpha]$ \cite[Theorem 3.8]{ADM}, so  that $R\subset S$ is minimal. Assume now that $ R\subset S$ is t-closed. Then, $M$ is a maximal ideal of $S$  \cite[Lemme 3.10]{Pic 1}. It follows that $R/M\subset S/M$ is an FIP field extension, for which  the Primitive Element Theorem holds. Therefore,  $S=R[\alpha]$ for some $\alpha \in S$, so that $R\subset S$ is   minimal. 
\end{proof}

\subsection{Examples and counterexamples}

The following examples  illustrate Theorem \ref{7.17} and Corollary \ref{7.18} in connection with the FCP or FIP properties, for non minimal extensions.

\begin{example} \label{7.19} In the sequel  $\{X_i\}_{i\in I}$ is a set of indeterminates over a field $k$ with $\mathrm c(k)=2$. 

(1) There exists a pointwise minimal extension $k\subset S$ which is neither a pointwise minimal pair nor an FCP extension. Set $S:=k[\{X_i\}_{i\in I}]/(\{X^2_i\}_{i\in I})$. For each $i\in I$, let $ x_i$ be the class of $X_i$ in $S$. Then, $S$ is a  zero-dimensional local ring with maximal ideal $M:=(\{ x_i\}_{i\in I})$ and $k\subset S$ is  subintegral. Let $x\in M$, there exists a finite set $J\subset I$,  such that $x=\sum_{i\in J}a_ix_i$, with   
 $a_i\in k$ 
for each $i\in J$. Then, $x^2=0$, so that 
$k\subset k[x]$ is  minimal ramified, and 
$k\subset S$ is pointwise minimal  
 but  is not a pointwise minimal pair by Theorem \ref{7.16}(1) (2),  because $x_ix_j\neq 0$ for $i,j\in I,\ i\neq j$. 
If  $|I|=\infty$, then 
$k\subset S$ has not FCP by \cite[Theorem 4.2]{DPP2} 
 and if $|I|<\infty$, then $k\subset S$ has  FCP by the same reference. 
 Moreover, 
 if $|k|=\infty$ and $|I|>2$, then 
 $k\subset S$ has not FIP by Proposition \ref{7.171} since $k$ is infinite and $k\subset S$ is not minimal. 
 Indeed, if $k\subset S$ has  FIP, there exists some $y\in S$ such that $S=k[y]$  (\cite[Theorem 3.8]{ADM}, which would imply that $k\subset S$ is  minimal. 
  If $|k|<\infty$ and $|I|=2$, then $k\subset S$ is  an FIP pointwise minimal extension  which is not a pointwise minimal pair. 

(2) There exists a pointwise minimal pair $k\subset T$ which is neither a co-pointwise minimal extension nor an FCP extension. 
 Set $T:=k[\{X_i\}_{i\in I}]/(\{X^2_i,X_iX_j\}_{i,j\in I})$. For each $i\in I$, let $ x_i$ be the class of $X_i$ in $T$. Then, $T$ is a  zero-dimensional local ring with maximal ideal $M:=(\{ x_i\}_{i\in I})$ and $k\subset T$ is  subintegral. For $x\in M$, there is a finite set $J\subset I$ such that $x=\sum_{i\in J}a_ix_i$, with    $a_i\in k$ for each $i\in J$. Then, $x^2=0$, so that $k\subset k[x]$ is  minimal ramified, and $k\subset T$ is a pointwise minimal  pair by Theorem \ref{7.16}  (2),  because $x_ix_j= 0$ for $i,j\in I$. If  $|I|=\infty$, then $k\subset S$ has not FCP \cite[Theorem 4.2]{DPP2}. If $|I|<\infty$, then $k\subset S$ has  FCP by the same reference. If   $|k|=\infty$ and $|I|>2$, then $k\subset S$ has not FIP  (same reason as in (1)). 

 If $|k|<\infty$ and $|I|=3$, then $k\subset S$ is  an FIP pointwise minimal  pair, but  is not  co-pointwise minimal  by Corollary \ref{7.18} because $\dim_k(M)=3$ since $M=\sum_{i=1}^3kx_i$.   

(3) There exists a co-pointwise minimal extension which is not an  FIP extension. Here, $k$ is  an infinite field. Let $S:=k(X_1,X_2)$ be the field of rational functions over $k$, where $X_1,X_2$ are two indeterminates and set $R:=k(X_1^2,X_2^2)$. Then, $R\subset S$ is a 
 height one radicial  
 field extension of  degree 4 
 because $[S:R]=[k(X_1,X_2):k(X_1^2,X_2)][k(X_1^2,X_2):k(X_1^2,X_2^2)]=4$. Moreover,
 $f^2\in R$ for each $f\in S$. In view of Corollary \ref{7.18}, $R\subset S$ is  a co-pointwise minimal extension. 
 But $R\subset S$ 
 has not  FIP by Proposition \ref{7.171} since $k$ is infinite and $  R \subset S$ is not minimal, for if not its degree would be $2$. 

(4) An FIP co-pointwise minimal extension exists
 by
 Corollary \ref{7.18}(2). 

(5) We give a last example, showing  that  condition (1) of Theorem \ref{7.16}  with $k\neq{}_S^tk={}_S^+k\neq S$  
may occur. Here $k$ is  an infinite field and   
 $k\subset K$  is a minimal radicial  field
  extension of degree 2. For any $y\in K\setminus k$, we have $K=k[y]=k+ky$, with $y^2\in k$. 
 Fix such an $y$ and set $S:=K[X]/(X^2)$, where $X$ is an indeterminate, and let $x$ be the class of $X$ in $S$. Then, $K\subset S$ is  minimal ramified  
 by Lemma \ref{1.3}, 
so that $S$ is a local ring with maximal ideal $M=Kx=kx+kxy$ satisfying $S/M\cong K$, and $S=k[x,y]=k+kx+ky+kxy$. Set $T:=k+M=k+kx+kxy$, so that $S=T[y]=T+Ty=K+M$. It is easy to see that $(T,M)$ is a zero-dimensional local ring. Moreover, $k\subset T$ is subintegral and $T\subset S$ is inert since $k\cong T/M\subset S/M\cong K$ is a minimal  radicial  field extension. To end, $k\subset S/M$ is 
 a height one radicial extension as well as $k\subset S$,
  $M^{[2]}=0$
   and $t^2\in k$ for each $t\in S$ (to see this,   write $t=a+by+x(c+dy)$, with $a,b,c,d\in k$; then, $t^2=a^2+b^2y^2\in k$). Hence, $k\subset S$ satisfies  Theorem \ref{7.16}(1). Moreover, since $x^2=0$, we get that $k\subset k[x]$ is minimal ramified and $k\subset k[y]$ is minimal inert. 
 \end{example}

\begin{remark} \label{7.20} We may remark that an extension $k\subset S$, satisfying  Theorem \ref{7.16}(1),  
 with $k\neq{}_S^tk={}_S^+k\neq S$, 
 has not FIP. Indeed, since $k\subset S/N$ is 
 radicial,   
 $k$ needs to be  an infinite field 
  because any finite extension of a finite field is separable. 
 Then, $k\subset S$ has to be minimal in view of Proposition \ref{7.171}, a contradiction. 
 Moreover,  
 Example \ref{7.19}(5) 
 shows that there exists a pointwise minimal extension $k\subset S$ with $x,y\in S\setminus k$ such that $k\subset k[x]$  and $k\subset k[y]$ are minimal, with  different types. 
  In particular, in this example,  $\ell[k,S]>2$ \cite[Proposition 7.4]{DPPS}.
\end{remark}

We end this section by considering a special situation.  Let $R\subset S$ be a co-pointwise minimal  extension. It follows from Proposition \ref{7.5} that $\ell[R,S]=2$. Therefore, $R\subset T$ and $T\subset S$ are minimal, for any $T\in[R,S]\setminus \{R,S\}$, a  situation studied  by D. E. Dobbs and J. Shapiro (\cite{D} and \cite{DS}). So, let $R\subset T$ and $T\subset S$ be two minimal extensions. Then \cite[Theorem 4.1]{DS}  provides us  13 conditions in order that $R\subset  S$ has FIP and
 \cite[Theorem 2.2]{D} 
 gives 
 2 conditions among 
these 13 conditions  
 in order that $|[R,S]|=3$. Rather than working  with these 13 conditions, we only write in the following proposition the conditions we need in our context.

\begin{proposition}\label{7.21} Let $R\subset T$ and $T\subset S$ be two minimal extensions. The following statements are equivalent:
\begin{enumerate}
\item  $R\subset S$ is an FIP pointwise minimal extension;

\item  $R\subset S$ is an FIP pointwise minimal pair;

\item  $R\subset S$ is an FIP co-pointwise minimal extension; 

\item  $M:= (R:S) \in \mathrm{Max}(R)$,  
$|R/M|< \infty$  and either $S/M\cong(R/M)[X,Y]/(X^2,Y^2, XY)$, or $|R/M|=2$  and  $S/M\cong (R/M)^3$. 
\end{enumerate}

Moreover, if these conditions hold, then $|[R,S]|>3$.
\end{proposition}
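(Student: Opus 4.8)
The plan is to prove Proposition~\ref{7.21} by combining the characterization of co-pointwise minimal extensions in Corollary~\ref{7.18} with the already-established equivalences for pointwise minimal notions, reducing to the residue extension $R/M \subset S/M$ and then reading off which of the three cases of Corollary~\ref{7.18} are compatible with the FIP hypothesis. First I would observe that since $R \subset T$ and $T \subset S$ are both minimal, $R \subset S$ has FCP and $\ell[R,S] \le 2$; if $R \subset S$ were itself minimal we would have $\ell[R,S]=1$, but then $T$ could not exist strictly between $R$ and $S$, so $\ell[R,S]=2$ precisely. Thus, by Proposition~\ref{7.5}, the three notions ``co-pointwise minimal'', ``pointwise minimal pair of length $2$'', and (once we know FCP) ``pointwise minimal with $\ell = 2$'' will coincide in this setting, giving the equivalences (1) $\Leftrightarrow$ (2) $\Leftrightarrow$ (3) almost for free — the only subtlety is checking that an FIP pointwise minimal extension of length $2$ is automatically a pointwise minimal pair, which follows from Proposition~\ref{7.5} since $\ell[R,S]=2$ forces any maximal chain to have the form $R \subset T' \subset S$.

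Next I would handle (3) $\Leftrightarrow$ (4). By Corollary~\ref{7.8} and Proposition~\ref{7.7}, a co-pointwise minimal extension is integral (it cannot be Pr\"ufer minimal, having length $2$), so Corollary~\ref{7.18} applies and tells us $M := (R:S) \in \mathrm{Max}(R)$ with one of its three cases holding. Now impose FIP. In case (1) of Corollary~\ref{7.18}, $R \subset S$ is subintegral with $\dim_{R/M}(N/M)=2$ and $N^2 \subseteq M$; since FIP forces $R/M$ to be finite when the extension is not of ``field type'' — more precisely, by Proposition~\ref{7.171} an infinite residue field would force minimality, contradicting $\ell=2$ — we get $|R/M| < \infty$, and then $S/M$ is the subintegral $(R/M)$-algebra generated by two square-zero elements $x,y$ with $xy \in M$, i.e.\ $S/M \cong (R/M)[X,Y]/(X^2,Y^2,XY)$. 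In case (2), $|R/M|=2$ and $S/M \cong (R/M)^3$ directly. In case (3), $R/M \subset S/M$ is height-one radicial of degree $p^2$; but such a field extension has FIP only if $R/M$ is finite (a radicial extension of an infinite field does not have FIP — cf.\ the reasoning in Remark~\ref{7.20}), and a finite field has no proper radicial extension, so case (3) is incompatible with FIP and is eliminated. Hence FIP co-pointwise minimal is exactly condition (4). Conversely, each alternative in (4) visibly yields an FIP integral extension with $\ell[R,S]=2$ that is a pointwise minimal pair by Theorem~\ref{7.17} (conditions (2) and (4)), hence co-pointwise minimal by Proposition~\ref{7.5}.

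For the final assertion $|[R,S]| > 3$, I would argue separately in the two surviving cases. When $S/M \cong (R/M)^3$ with $|R/M|=2$, the ring $(R/M)^3$ has exactly three proper nontrivial $(R/M)$-subalgebras of the form $(R/M)^2$ embedded via the three pairs of coordinates (plus possibly a diagonal one), so $|[R,S]| \ge 5 > 3$; more carefully, one counts the intermediate rings of $k \subset k^3$ for $k = \mathbb{F}_2$. When $S/M \cong (R/M)[X,Y]/(X^2,Y^2,XY)$, writing $k = R/M$ and $V = kx \oplus ky$ the square-zero maximal ideal, every $k$-subspace of $V$ gives an intermediate ring $k \oplus W$; since $\dim_k V = 2$ there are at least the two coordinate lines plus $V$ itself plus $k$, and as $|k| \ge 2$ we again get $|[R,S]| \ge 4 > 3$ (and strictly more when $|k| > 2$). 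The main obstacle I anticipate is the bookkeeping in eliminating case (3) of Corollary~\ref{7.18} and in the subintegral case of case (1): one must be careful that FIP really does force $|R/M| < \infty$ and that the resulting two-generator square-zero algebra is exactly the stated quotient, not merely a quotient of it — this uses that $\dim_{R/M}(S/M) = 3$ (from Corollary~\ref{7.169}(1) with $\ell = 2$) so the presentation is forced. Everything else is assembling Proposition~\ref{7.5}, Corollary~\ref{7.18}, Corollary~\ref{7.169}, and Proposition~\ref{7.171}.
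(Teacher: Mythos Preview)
Your argument for (1) $\Rightarrow$ (2), (3) has a genuine gap: you assert that the mere existence of the length-$2$ maximal chain $R \subset T \subset S$ forces $\ell[R,S] \le 2$, but this is false, even for pointwise minimal extensions. Example~\ref{7.19}(5) exhibits a pointwise minimal extension $k \subset S$ admitting a maximal chain $k \subset K \subset S$ of length $2$ (minimal inert followed by minimal ramified), yet Remark~\ref{7.20} records $\ell[k,S] > 2$, because the alternative step $k[x] \subset k[x,y]$ fails to be minimal. The Jordan--H\"older chain condition does not hold for arbitrary FCP lattices $[R,S]$; it holds for semimodular ones (cf.\ Remark~\ref{proplat}), and semimodularity is precisely what you would have to establish rather than assume. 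Since your passage ``$\ell=2$ plus pointwise minimal implies pointwise minimal pair, hence co-pointwise minimal'' rests entirely on $\ell[R,S]=2$, the implication (1) $\Rightarrow$ (3) is not proved. (That example is not FIP, so it does not contradict the proposition---but your argument never invokes FIP at this step.)

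The paper closes the cycle differently: it proves (1) $\Rightarrow$ (4) directly. From an FIP pointwise minimal extension it first rules out the integrally closed case (which would be minimal), then uses Proposition~\ref{7.171} to force $|R/M|<\infty$ and exclude t-closedness, leaving only the subintegral case and the seminormal infra-integral case with $|R/M|=2$ among the possibilities of Theorem~\ref{7.17}. In either case Proposition~\ref{7.90} forces $R\subset T$ and $T\subset S$ to be of the same type (both ramified, or both decomposed), and the paper then computes $S/M$ explicitly, invoking the Dobbs--Shapiro classification. Your treatment of (3) $\Leftrightarrow$ (4) via Corollary~\ref{7.18} and of the final inequality $|[R,S]|>3$ is essentially sound and parallel to the paper, but you cannot reach (3) from (1) without first carrying out this structural analysis.
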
 

\begin{proof} We have the following implications: (2) $\Rightarrow$ (1) is clear and (3) $\Rightarrow$ (2) by Proposition \ref{7.5}. We also have (4) $\Rightarrow$ (3) by Corollary \ref{7.18}. Indeed, the condition $|R/M|=2$ and $S/M\cong(R/M)^3$ is  Corollary \ref{7.18}(2). Now, assume that $|R/M|< \infty$ and $S/M\cong(R/M)[X,Y]/(X^2,Y^2, XY)$. Set $k:= R/M$ and $S':=S/M$, so that $S'=k[X,Y]/(X^2,Y^2,XY)$. If $x$ and $y$ are the classes of $X$ and $Y$ in $S'$, we get that $S'$ is a zero-dimensional local ring with maximal ideal $N:=kx+ky$ such that $S'=k+N$. It follows that $k\subset S'$ is subintegral since $k\cong S'/N$. Moreover, $N^2=0$ and $\dim_k(N)=2$ imply that $k\subset S'$ is co-pointwise minimal by Corollary \ref{7.18}(1), and so is $R\subset S$. It remains to show  (1) $\Rightarrow$ (4).

Assume that (1) holds. 
Since $R\subset S$ is  pointwise minimal, then $R\subset S$ is either integrally closed, or integral by Proposition  \ref{7.6}. In the first case $R\subset S$ is minimal by Proposition \ref{7.7}, an absurdity. Then, $R\subset S$ is  integral. Let $M$ and $N$ be the respective crucial maximal ideals of $R\subset T$ and $T\subset S$. Now, $M=N\cap R$ by Proposition \ref{7.2} (1), because $M\subseteq N$ is a consequence of $M=(R:S),\ N=(T:S)$ and $N$ lies over a maximal ideal of $R$ 
which is in $\mathrm{Supp}(S/R)=\{M\}$. 
Moreover, since $R\subset S$ has FIP 
and is not minimal, it follows by Proposition \ref{7.171} that 
$R/M$ is a finite field and $R\subset S$ is not a t-closed extension. Let $p:=\mathrm{c}(R/M)$. Therefore, we have to exclude condition  
(1)  of Theorem \ref{7.17}, when $R\subset S$ is not subintegral. 
 
 The remaining conditions of Theorem \ref{7.17} are (1), where $R\subset S$ is  subintegral and (3). In case (1), $R\subset S$ is subintegral and in case (3), $R\subset S$ is seminormal infra-integral, so that 
$R\subset T$ and $T\subset S$ have to be of the same type 
by Proposition \ref{7.90}. 
Hence, we have to consider the conditions of \cite[Theorem 4.1]{DS}, where $ M=N\cap R$, with $R\subset T$ and $T\subset S$ either both ramified or both decomposed, that is to say, conditions (vii) and (xiii) 
of \cite[Theorem 4.1]{DS}. 

Set $k:=R/M,\ T':=T/M,\ N'=N/M$ and $S':=S/M$. If $R\subset T$ and $T\subset S$ are both ramified, with $R\subset S$ a pointwise minimal extension, then $k\subset T'$ and $T'\subset S'$ are both ramified, $k\subset S'$ is a pointwise minimal FIP extension and Theorem \ref{7.16}(1)  holds. There exists $x\in T'$ such that $T'=k[x]$ is a local ring with maximal ideal $N':=kx$ and such that $x^2=0$. Moreover, there exists $y\in S'\setminus T'$, such that $S'=   T'[y]$  is a local ring with maximal ideal $P=kx+ky$, with $y^2,xy\in N'$. Then, $S'=k+kx+ky$ and $y\in P$.  
It follows that  
 $y^2=0$ 
  by Theorem \ref{7.16} (1), 
 and $xy\in N'$, 
 which 
 gives that $xy=ax$, for some $a\in k$. 
 Hence 
 $x(y-a)=0$ in $ S'$. If $a\neq 0$, then $y-a$ is a unit in $S'$, and  $x=0$, a contradiction. 
 This implies that 
 $a=0$ 
 since 
 $\{1,x,y\}$ 
  is 
 a free system over $k$ 
  and 
 $S'\cong k[X, Y]/(X^2,Y^2,XY)$ 
(there is a surjective $k$-algebra morphism $k[X, Y]/(X^2,Y^2,XY)\to S'$  between two vector spaces whose  dimensions are equal). 
 
If $R\subset T$ and $T\subset S$ are both decomposed and  $R\subset S$   is
 pointwise minimal, then $k\subset T'$ and $T'\subset S'$ are both decomposed, and $k\subset S'$ is a pointwise minimal FIP extension. It follows that condition (3) of Theorem \ref{7.16} is satisfied. By \cite[Lemma 5.4]{DPP2}, $S'$ has 3 maximal ideals, whose intersection is $(0)$ and $S'\cong k^3 $, which gives in fact condition (4) of Theorem \ref{7.16} 
since $|k|=2$.

None of the conditions of \cite[Theorem 2.2]{D} holds for a pointwise minimal FIP extension, so that $|[R,S]|>3$. This  can be  easily seen: 
for each $t\in S\setminus T$,  $R\subset R[t]$ is minimal, with $R[t]\neq T,S,R$.   
\end{proof}

\subsection{Transfer properties with respect to Nagata extensions}

We are now looking at the transfer properties of pointwise minimal extensions (resp. pairs) with respect to Nagata rings. To get new results, we consider only non-minimal extensions $R\subset S$ since   $R\subset S$ is minimal if and only if  $R(X)\subset S(X)$ is minimal by   \cite[Theorem 3.4]{DPP3}.

\begin{proposition}\label{7.173} A  non-minimal 
ring extension $R\subset S$ 
 with $M:=(R:S)\in\mathrm{Max}(R)$ 
is a pointwise minimal extension (resp. pair, co-pointwise minimal extension) if and only if $R(X)\subset S(X)$ is a pointwise minimal extension (resp. pair, co-pointwise minimal extension), except for the case where $|R/M|=2$ and $R\subset S$ is a seminormal infra-integral extension.
\end{proposition}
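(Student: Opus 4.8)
The strategy is to reduce the statement to the earlier characterizations (Theorems~\ref{7.16} and \ref{7.17}, Corollary~\ref{7.18}) by transferring each of the individual conditions through the Nagata functor. First I would record the basic facts that make the transfer possible: $g\colon R\to R(X)$ is faithfully flat, $MR(X)\in\mathrm{Max}(R(X))$, $R(X)/MR(X)\cong (R/M)(X)$, and, most importantly, $S(X)\cong R(X)\otimes_R S$ whenever $R\subset S$ is integral or a flat epimorphism (as quoted in the excerpt from \cite{DPP3} and \cite{Pic 5}). Combined with Corollary~\ref{7.8}, a non-minimal pointwise minimal extension is either Pr\"ufer minimal (impossible here, since pointwise minimal integrally closed $\Rightarrow$ minimal by Proposition~\ref{7.7}) or integral; so in the relevant direction I may assume $R\subset S$ integral, hence $S(X)\cong R(X)\otimes_R S$ and, by Lemma~\ref{7.172}, that $R(X)\subset S(X)$ is $MR(X)$-crucial with $(R(X):S(X))=MR(X)$ (the latter because conductors commute with the flat base change $g$, a standard fact for finite or integral extensions). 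For the converse direction I would observe that $R\subset S$ is recovered from $R(X)\subset S(X)$ by the faithfully flat descent: $R=R(X)\cap S$ inside $S(X)$, and $R\subset S$ is integral iff $R(X)\subset S(X)$ is, so no information is lost.

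The core of the argument is then a term-by-term check that the four conditions of Theorem~\ref{7.17} are stable under $R\mapsto R(X)$ in both directions, once we pass to the residue fields. Since $R(X)/MR(X)\cong k(X)$ where $k:=R/M$, and since $S(X)/MR(X)\cong (S/M)(X)\cong (S/M)\otimes_k k(X)$, condition~(3) of Theorem~\ref{7.17} ($|k|=2$ and $R\subset S$ seminormal infra-integral) is exactly where the transfer fails: $|k(X)|=\infty$ always, so the Nagata extension can never satisfy~(3), even though the base extension does. This is precisely the excluded case in the statement, and I would point out that in this excluded case $R\subset S$ pointwise minimal while $R(X)\subset S(X)$, being seminormal infra-integral over an infinite residue field, is minimal by Proposition~\ref{7.160}(2) (hence in particular not a non-minimal pointwise minimal pair unless it degenerates). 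For the other conditions: $t$-closure and seminormalization commute with the flat base change $g$ (this is where I would invoke the relevant stability results; for $t$-closedness one can use that the defining relations $b^2-rb,\ b^3-rb^2\in R$ pull back along $R(X)\cap S = R$, and for seminormality similarly), so ${}_{S(X)}^t(R(X))=({}_S^tR)(X)$ and ${}_{S(X)}^+(R(X))=({}_S^+R)(X)$; the equality ${}_S^tR={}_S^+R$ transfers both ways. The square-zero-type conditions transfer because $\sqrt[S(X)]{MR(X)}=(\sqrt[S]{M})(X)$ and $I(X)^{[2]}\subseteq MR(X)\iff I^{[2]}\subseteq M$ for an ideal $I\supseteq M$ of $S$ (coefficient-wise, using that $X$ is transcendental). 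The height-one radicial condition transfers because $\mathrm{c}(k)=\mathrm{c}(k(X))=p$ and $x^p\in R$ for all $x\in S$ iff the same holds over the Nagata rings (again coefficient-wise, since the $p$-th power map is additive in characteristic $p$). Condition~(2) (subintegral, $\sqrt[S]{M}^2\subseteq M$) transfers by the same two ingredients plus the fact that subintegrality is preserved and reflected by $g$. Condition~(4) adds $S/M\cong (R/M)^n$ with $n\le 3$; but this again forces $|k|=2$, so it lives inside the excluded case and need not be separately transferred outside it.

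With these stability statements in hand the proof is mechanical: for ``pointwise minimal'' I match conditions (1),(3) of Theorem~\ref{7.17} on the base with (1),(3) on the Nagata ring, noting that the (3)-on-the-base case produces a minimal (not non-minimal pointwise minimal) Nagata extension and is therefore exactly the exception; for ``pointwise minimal pair'' I match (1)-with-${}_S^tR=R$, (2), (4), again with (4) confined to the exception; for ``co-pointwise minimal'' I invoke Proposition~\ref{7.5} and Corollary~\ref{7.18}, using additionally that $\ell[R(X),S(X)]=\ell[R,S]$ for FCP integral extensions (quoted from \cite{DPP3}, Theorem~3.4) so that the length-$2$ condition transfers, and that the dimension count $\dim_{k(X)}(S(X)/MR(X))=\dim_k(S/M)$ holds since the Nagata construction is a flat scalar extension of the residue field. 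The main obstacle I anticipate is bookkeeping rather than depth: one must be careful that $(R(X):S(X))=(R:S)(X)=MR(X)$ really does hold (it does, for integral or flat-epimorphism extensions, by \cite[Lemma~3.1]{DPP3} type arguments, but the flat-epimorphism sub-case needs the injectivity of $R(X)\otimes_R S\to S(X)$ recalled in the excerpt), and that the canonical-decomposition pieces genuinely base-change correctly; assembling these into a clean case analysis while correctly isolating the $|R/M|=2$ seminormal infra-integral exception is the delicate point. I would therefore structure the write-up as: (i) reduce to integral $R\subset S$; (ii) establish the five ``commutation with $g$'' lemmas (conductor, $t$-closure/seminormalization, radical, $[2]$-condition, $p$-th-power condition, length); (iii) run the three parallel case analyses; (iv) verify that the excluded case is genuinely excluded by exhibiting the failure via Proposition~\ref{7.160}(2).
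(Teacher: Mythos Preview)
Your proposal is correct and follows essentially the same route as the paper: reduce to the integral case, compute $(R(X):S(X))=MR(X)$, and then transfer conditions (1) and (2) of Theorem~\ref{7.17} term by term (the paper cites \cite[Lemma~3.15]{DPP3} for ${}_{S(X)}^tR(X)=({}_S^tR)(X)$ and ${}_{S(X)}^+R(X)=({}_S^+R)(X)$, and does the coefficient-wise check for $N^{[2]}\subseteq M$ and the $p$-th-power condition just as you outline), while observing that conditions (3) and (4) can never hold for $R(X)\subset S(X)$ since $|k(X)|=\infty$; the co-pointwise case is handled via Proposition~\ref{7.5} and the length equality (the paper cites \cite[Theorem~3.3]{Pic 4} rather than \cite{DPP3}). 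One small slip in your discussion of the excluded case: you write that $R(X)\subset S(X)$ ``is minimal by Proposition~\ref{7.160}(2)'', but the correct conclusion is the opposite---since minimality transfers both ways and $R\subset S$ is assumed non-minimal, $R(X)\subset S(X)$ is \emph{not} minimal, and hence by Proposition~\ref{7.160}(2) (applied with the infinite residue field $k(X)$) it is \emph{not} pointwise minimal; this is exactly why the equivalence breaks there, as Remark~\ref{7.174} records.
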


\begin{proof} Observe that $R\subset S$ is  integral (resp. integrally closed)  if and only if so is $R(X)\subset S(X)$ \cite[Proposition 3.8]{DPP3}. Since a pointwise minimal extension (resp. pair, co-pointwise minimal) is either integral or integrally closed by Proposition \ref{7.6}, it is enough to assume that $R\subset S$ is either integral or integrally closed. 
 In the same way, $R\subset S$ is minimal if and only if so is $R(X)\subset S(X)$  \cite[Theorem 3.4]{DPP3}.
Since a pointwise minimal   integrally closed extension is minimal, we  delete this condition.

Now, assume that $R\subset S$ is  integral  
 with $M:= (R:S)\in\mathrm{Max}(R)$. Then $MR(X)S(X)\subseteq R(X)$ and $MR(X)\in\mathrm{Max}(R(X))$ give that $(R(X):S(X))=MR(X)$. 
 Assume also that we have either $|R/M|\neq 2$ or $R\subset S$ is not a seminormal infra-integral extension (see Remark  \ref{7.174}). 

We infer from Theorem \ref{7.17} that $R\subset S$ is  pointwise minimal  if and only if $R\subset S$ satisfies Theorem \ref{7.17}(1). In the same way, $R(X)\subset S(X)$ is  pointwise minimal   if and only if  $R(X)\subset S(X)$ 
satisfies  Theorem \ref{7.17}(1) since $|R(X)/(MR(X))|\neq 2$.  Call     (C)  
 one of the conditions (1) or (2) 
 of Theorem~\ref{7.17}. Then  $R\subset S$ is a pointwise minimal pair if and only if  $R\subset S$ 
satisfies (C) 
 =(1) with ${}_S^tR=R$ or (C)=(2). In the same way, $R(X)\subset S(X)$ is a pointwise minimal pair if and only if $R(X)\subset S(X)$ satisfies (C) 
 =(1) with ${}_{S(X)}^tR(X)=R(X)$ or (C)=(2).

  We are going to show that $R\subset S$ satisfies condition (C) if and only if so does $R(X)\subset S(X)$. Set $k:=R/M$ and $T:=S/M$. Then, $k(X)=R(X)/(MR(X))$ and $T(X)=S(X)/(MR(X))$. 
 
(C)=(2): The equivalence of condition (2)  for $R\subset S$ and $R(X)\subset S(X)$ is immediate. Indeed, $N:=\sqrt[S]{M}\in\mathrm{Max}(S)$ implies $NS(X)=\sqrt[S(X)]{MR(X)}\in\mathrm{Max}(S(X))$. Then, for (C)=(2), $R\subset S$ is a pointwise minimal pair if and only if  $R(X)\subset S(X)$ is a pointwise minimal pair.

(C)=(1): Set $U:={}_S^tR={}_S^+R$. Then, $U(X)={}_{S(X)}^tR(X)={}_{S(X)}^+R(X)$ by \cite[Lemma 3.15]{DPP3}  and ${}_S^tR\subset S$ if and only if ${}_{S(X)}^tR(X)\subset S(X)$. 

  Now $R\subset U$ is subintegral if and only if so is $R(X)\subset U(X)$  \cite[Lemma 3.15]{DPP3}. If these conditions hold, let $N$ be the maximal ideal of $S$, 
  which is also the maximal ideal of $U$
 so that $NS(X)$ is the maximal ideal of $S(X)$. Any element of $S(X)$ (resp. $NS(X)$) is of the form $f(X)=P(X)/Q(X)$, where $P(X)\in S[X]$ (resp. $NS[X]$) and $Q(X)\in \Sigma$. If $R\subset S$ satisfies (C), then 
   $N^{[2]}\subseteq M$. 
   Let $f(X)=P(X)/Q(X)\in NS(X)$, with $P(X)=\sum a_iX^{i}$, where $a_i\in N$ for each $i$. Then, $P(X)^2=\sum a_i^2X^{2i}+2\sum a_ia_jX^{i+j}$. 
    From $(a_i+a_j)^2=a_i^2+a_j^2+2a_ia_j\in M$, with $a_i^2,a_j^2\in M$,  we deduce  
 $2a_ia_j\in M$, so that $f(X)^2\in MR(X)$. The converse is obvious. 
 
The properties of  being  a height one radicial
field extension  and of being of characteristic $p$  are transmitted  to Nagata rings. At last, if $\mathrm{c}(k)=:p$ 
 , then  $N^{[p]}\subseteq k$  if and only if $NT(X)^{[p]}\subseteq k(X)$, in a similar way as it was proved  
  just before. Then $U\subset S$ is a height one radicial extension if and only if so is $U(X)\subset S(X)$.

 To conclude,  $R\subset S $ satisfies  (C) if and only if $R(X)\subset S(X) $ satisfies  (C). Then we have the equivalence of pointwise minimal extension (resp. pair) for $R\subset S$  and $R(X)\subset S(X)$. 
 
 At last, in view of Proposition \ref{7.5}, $R\subset S$ is a co-pointwise minimal extension if and only if $R\subset S$ is a pointwise minimal pair such that $\ell[R,S]=2$. Since $\ell[R,S]=\ell[R(X),S(X)]$ for an FCP extension $R\subset S$ by \cite[Theorem 3.3]{Pic 4} and $R(X)\subset S(X)$ is a co-pointwise minimal extension if and only if $R(X)\subset S(X)$ is a pointwise minimal pair such that $\ell[R(X),S(X)]=2$, we get that $R\subset S$ is a co-pointwise minimal extension if and only if so is
  $R(X)\subset S(X)$.
\end{proof}

\begin{remark}\label{7.174}   In the previous proposition, we had to exclude the cases (3)  and (4) 
of Theorem \ref{7.17}  where $|R/M|=2$ and $R\subset S$ is a seminormal infra-integral extension.   Indeed, in these cases,  
$R(X)/(M(X)) \cong (R/M)(X)$ has  infinitely many elements,  so that $R(X)\subset S(X)$ cannot satisfy conditions (3) or (4) of Theorem \ref{7.17}. 
\end{remark}

\section{Lattices properties of pointwise minimal extensions}

We introduce here FMC  pairs
since we will use them. 

\subsection{FMC pairs} 
An extension $R\subset S$ is 
called an {\it FMC pair} if $R\subset T$ has FMC for each $T\in[R, S]$.
 We  intend  to show  that  FMC pairs are nothing but  FCP extensions.  For  some  results already known, we give   shorter proofs.

 We temporarily introduce a  definition. An extension $U\subseteq V$  is called ${FMC}^\star_n$ if  there is a finite maximal chain from $U$ to $V$ with length $\leq n$ and $U\subseteq \overline U$ has FCP (or equivalently, has FMC). 

\begin{theorem}\label{1.18}  Let $R\subseteq S$ be a ring extension. The following conditions are equivalent:
 \begin{enumerate}

\item  $R\subseteq S$ has FCP; 

\item  There exists a finite maximal chain $\mathcal C$ from $R$ to $S$ with $\overline R \in \mathcal C$;

\item   $R\subset S$ is  an FMC pair;

\item  $R\subset S$ and $R\subseteq \overline R$ have FMC.

\end{enumerate}
\end{theorem}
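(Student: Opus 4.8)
The plan is to establish the cycle of implications (1) $\Rightarrow$ (2) $\Rightarrow$ (3) $\Rightarrow$ (4) $\Rightarrow$ (1). The role of the auxiliary class $\mathrm{FMC}^\star_n$ will be to provide an induction parameter (the length of a maximal chain avoiding the integral closure, together with control on the integrally closed part) that lets us break an FMC pair into pieces, each of which is either integral or a flat epimorphism, and then reglue.

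First I would prove (1) $\Rightarrow$ (2). If $R \subseteq S$ has FCP, then both $R \subseteq \overline{R}$ and $\overline{R} \subseteq S$ have FCP; each admits a finite maximal chain, and by \cite[Theorem 4.11]{DPP3} (or directly, since FCP extensions have FMC) we may juxtapose them to get a finite maximal chain $\mathcal{C}$ from $R$ to $S$ passing through $\overline{R}$. The implication (2) $\Rightarrow$ (3) is the substantial step: given a finite maximal chain through $\overline{R}$, I want to show $R \subseteq T$ has FMC for every $T \in [R,S]$. Here I would split $T$ via its own integral closure $\overline{R}_T := {}_T\overline{R}$ in $T$ (which equals $\overline{R} \cap T$): the extension $R \subseteq \overline{R}\cap T$ is integral and $\overline{R}\cap T \subseteq T$ is integrally closed. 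For the integral part, the hypothesis that $R \subseteq \overline{R}$ has FMC (extracted from $\mathcal{C}$) combined with the fact that FMC integral extensions are FCP (conductor considerations, as in \cite[Theorem 4.2]{DPP2} when the relevant residue ring is Artinian — or more generally the structure of finite chains in integral extensions) forces $R \subseteq \overline{R}\cap T$ to have FMC. For the integrally closed part, I would use that the segment of $\mathcal{C}$ above $\overline{R}$ consists of Prüfer minimal steps (Theorem \ref{1.2} and Proposition \ref{7.6}-type reasoning: integrally closed minimal extensions are Prüfer minimal), and that $[\overline{R}\cap T, T]$ embeds into $[\overline{R}, S]$ in a length-preserving way because Prüfer extensions are lattice-isomorphic to their localizations — so a finite maximal chain above $\overline{R}\cap T$ exists. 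Juxtaposing gives FMC for $R \subseteq T$, proving (3).

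Then (3) $\Rightarrow$ (4) is immediate: an FMC pair gives FMC for $R \subseteq S$ by definition (take $T = S$), and FMC for $R \subseteq \overline{R}$ by taking $T = \overline{R}$. The final implication (4) $\Rightarrow$ (1) is where the $\mathrm{FMC}^\star_n$ device earns its keep. Assuming $R \subseteq S$ and $R \subseteq \overline{R}$ have FMC, fix a finite maximal chain from $R$ to $S$ of some length $n$; this shows $R \subseteq S$ is $\mathrm{FMC}^\star_n$. I would argue by induction on $n$ that $\mathrm{FMC}^\star_n$ implies FCP. For $n = 0$ there is nothing to prove; for the step, take the first link $R \subset R_1$ of the maximal chain. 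If $R \subset R_1$ is integral it is a finite minimal extension, hence FCP; the quotient $R_1 \subseteq S$ is then $\mathrm{FMC}^\star_{n-1}$ and still satisfies the integral-closure hypothesis (one checks $R_1 \subseteq \overline{R_1}$ has FMC because $\overline{R_1} = \overline{R}$ and $R_1 \subseteq \overline{R}$ is a sub-extension of the FMC extension $R \subseteq \overline{R}$ — using that FMC for integral extensions passes to sub-extensions via conductor arguments). If instead $R \subset R_1$ is a flat epimorphism (Prüfer minimal), then since $R \subseteq \overline{R}$ has FMC it has FCP, so $R \subseteq \overline{R}$ is FCP; and the Prüfer minimal step together with the FMC data above it, handled by the Knebusch–Zhang structure of Prüfer extensions, lets us conclude $R_1 \subseteq S$ is again $\mathrm{FMC}^\star_{n-1}$ with the hypotheses preserved. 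Induction then yields FCP for $R \subseteq S$.

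**The hard part will be** the bookkeeping in (2) $\Rightarrow$ (3) and (4) $\Rightarrow$ (1) showing that the integral-closure hypotheses are inherited by the relevant sub- and quotient-extensions, and that the length-preserving correspondence for the integrally closed (Prüfer) part genuinely transfers finite maximal chains — this is where one must invoke \cite[Scholium A]{Pic 5} and the Prüfer-extension machinery of \cite{KZ} carefully rather than wave hands. A secondary subtlety is making sure that ``$R \subseteq \overline{R}$ has FMC'' is equivalent to ``$R \subseteq \overline{R}$ has FCP'' for integral extensions, which is the engine that stops the induction from looping; this rests on the fact that in an integral extension a finite maximal chain bounds all chains (via \cite[Corollary 3.2]{DPP2} and the support/conductor analysis), so no infinite chain can coexist with a finite maximal one.
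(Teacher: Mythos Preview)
Your overall induction device ($\mathrm{FMC}^\star_n$) matches the paper's, but your logical scaffolding and the Pr\"ufer step are both off.

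First, the paper does not attempt $(2)\Rightarrow(3)$; it simply observes that $(1)\Rightarrow(2)\Rightarrow(4)$ and $(1)\Rightarrow(3)\Rightarrow(4)$ are all obvious, so the only work is $(4)\Rightarrow(1)$. Your proposed $(2)\Rightarrow(3)$ is therefore unnecessary labour, and your sketch of it is shaky: the claim that $[\overline R\cap T,\,T]$ ``embeds into $[\overline R,S]$ in a length-preserving way'' is not something you can read off from the Pr\"ufer machinery without first knowing that $\overline R\subseteq S$ has FCP --- which is essentially what you are trying to prove.

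Second, and more seriously, your handling of the Pr\"ufer case in $(4)\Rightarrow(1)$ has a genuine gap. You assert that when $R\subset R_1$ is Pr\"ufer minimal, ``$R_1\subseteq S$ is again $\mathrm{FMC}^\star_{n-1}$ with the hypotheses preserved''. The chain length drops by one, yes, but the second hypothesis requires that $R_1\subseteq\overline{R_1}$ (integral closure of $R_1$ in $S$) has FCP, and this does not follow from $R\subseteq\overline R$ having FCP by any general base-change principle you cite. The paper sidesteps this entirely by localizing: letting $N=\mathcal C(R,R_1)$, Lemmata~\ref{1.5} and~\ref{1.6} force $N\notin\mathrm{Supp}_R(\overline R/R)$. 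For $M\in\mathrm{Supp}_R(\overline R/R)$ one has $R_M=(R_1)_M$, so the localized chain has length $\leq n-1$ and induction applies to $R_M\subseteq S_M$; for $M\notin\mathrm{Supp}_R(\overline R/R)$ the localized extension is integrally closed FMC, hence FCP by \cite[Theorem~6.3]{DPP2}. Since $|\mathrm{Supp}_R(S/R)|<\infty$ \cite[Corollary~3.2]{DPP2}, one patches via \cite[Proposition~3.7(a)]{DPP2}. Your appeal to ``the Knebusch--Zhang structure of Pr\"ufer extensions'' does not substitute for this localization argument.
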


\begin{proof}  Obviously,  (1) $\Rightarrow$ (2) $\Rightarrow$ (4) and (1) $\Rightarrow$ (3) $\Rightarrow$ (4). It remains to show that (4) $\Rightarrow$ (1).

 So, assume that (4) holds. Then, $R\subseteq \overline R$ has FCP \cite[Theorem 4.2]{DPP2}. 
 
We are going to show by induction on $n$ that  $R\subseteq S$ has FCP, under the above assumption.
 The induction hypothesis is: 
  ${FMC}^\star_n \Rightarrow  FCP$.  
 If $n=1$, then $R\subseteq S$ is minimal and has FCP.

 Assume that the induction hypothesis holds for $n-1$. Let $\mathcal C:=\{R_i\}_{i=0}^n$ be a finite maximal chain with length $n$ such that $R_0=R$ and $R_n=S$. Then, $R\subset R_1$ is  minimal  and $\mathcal C_1:=\{R_i\}_{i=1}^n$ is a finite maximal chain with length $n-1$. 

{\it If $R\subset R_1$ is minimal integral}, then $R_1\subseteq \overline R$, so that $\overline R$ is also the integral closure of $R_1\subseteq S$. Moreover, $R_1\subseteq \overline R$ has FCP. The induction hypothesis gives that $R_1\subseteq S$ has FCP, and so has $\overline R\subseteq S$. To conclude, $R\subseteq S$ has FCP by \cite[Theorem 3.13]{DPP2}.

 {\it If $R\subset R_1$ is Pr\"ufer minimal}, set $N:=\mathcal{C}(R,R_1)$. Then, $N\not\in\mathrm{Supp}_R(\overline R/R) $ in view of 
  Lemmata \ref{1.6} and \ref{1.5}.
  Let $M\in\mathrm{Supp}_R(\overline R/R)$. Then, $M\neq N$ gives that $R_M=(R_1)_M$ and $(\overline R)_M=\overline {(R_M)}$. Set $\mathcal C_M:=\{(R_i)_M\}_{i=0}^n$. Then, $\ell(\mathcal C_M)\leq n-1$ giving that $R_M\subseteq S_M$ has FCP. Let $M\not\in\mathrm{Supp}_R(\overline R/R)$. Then, 
 $(\overline R)_M=R_M$ and $R_M\subseteq S_M$ is an FMC integrally closed extension. It follows that $R_M\subseteq S_M$ has FCP by \cite[Theorem 6.3]{DPP2}. Now from $|\mathrm{Supp}_R(S/R)|<\infty$ \cite[Corollary 3.2]{DPP2}, we infer that $R\subseteq S$ has FCP  \cite[Proposition 3.7 (a)]{DPP2}.
\end{proof}

\begin{remark}\label{1.17}  In the above proposition,  (1)  $\Leftrightarrow$ (2) is proved by Ayache \cite[Theorem 24]{Ay} and   (1) $\Leftrightarrow$ (4) is proved by Ayache and Dobbs  \cite[Theorem 4.12]{AD} in a different way.
\end{remark} 

We are now going to look at the lattice properties of pointwise minimal extensions or pairs. Before, we give the following lemma.
 
 \begin{lemma}\label{7.301}An FMC pair 
 is an affine pair.
\end{lemma}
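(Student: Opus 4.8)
The plan is to show that if $R\subset S$ is an FMC pair, then every $T\in[R,S]$ is a finitely generated $R$-algebra. By Theorem~\ref{1.18}, an FMC pair is the same thing as an FCP extension, so it suffices to prove that an FCP extension $R\subseteq S$ is an affine pair. Since the affine-pair property is inherited by subextensions (every $R$-subalgebra of $S$ is also an $R$-subalgebra of any intermediate ring containing it), and since any $T\in[R,S]$ inherits the FCP property, it is enough to prove that $S$ itself is of finite type over $R$; applying this to each $T\in[R,S]$ then gives the result.

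To prove that an FCP extension $R\subseteq S$ has $S$ of finite type over $R$, I would use the canonical decomposition together with the structure of FCP extensions. First, $R\subseteq S$ has FCP, so in particular there is a finite maximal chain $R=R_0\subset R_1\subset\cdots\subset R_n=S$ obtained by juxtaposing minimal extensions (as recalled in the introduction). Each minimal extension $R_i\subset R_{i+1}$ is monogenic by Theorem~\ref{1.2}: it is either finite monogenic or a flat epimorphism, and in the latter case it is still of the form $R_i[t]$ for a single element $t$ (a minimal extension is monogenic by definition, since $[R_i,R_{i+1}]=\{R_i,R_{i+1}\}$ forces $R_{i+1}=R_i[t]$ for any $t\in R_{i+1}\setminus R_i$). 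Picking such a generator $t_i$ at each step, we get $S=R[t_0,t_1,\ldots,t_{n-1}]$, which exhibits $S$ as a finitely generated $R$-algebra.

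Thus the argument reduces to two standard facts assumed in the excerpt: an FCP extension possesses a finite maximal chain of minimal extensions, and a minimal extension is monogenic. The main (very minor) obstacle is simply to assemble these correctly: one must observe that the FMC-pair hypothesis gives FCP via Theorem~\ref{1.18}, that FCP extensions admit a finite maximal chain refining into minimal steps, that each minimal step contributes a single algebra generator, and finally that the finite-type property passes down to every intermediate ring $T$ by intersecting the generating set of $S$ with the observation that $R\subseteq T\subseteq S$ has FCP as well, hence $T$ is of finite type over $R$ by the same reasoning. Since no delicate estimate or new construction is needed, the proof is short; the only thing to be careful about is invoking the equivalence FMC pair $\Leftrightarrow$ FCP from Theorem~\ref{1.18} rather than trying to argue directly from the FMC-pair definition.
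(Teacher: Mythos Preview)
Your argument is correct, but it takes an unnecessary detour. The paper's proof is a single line: by definition of an FMC pair, $R\subset T$ has FMC for every $T\in[R,S]$; a finite maximal chain from $R$ to $T$ is a juxtaposition of minimal (hence monogenic) extensions, so $T$ is of finite type over $R$. That is the whole proof.

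You instead route through Theorem~\ref{1.18} to convert ``FMC pair'' into ``FCP'', then observe that FCP descends to $R\subseteq T$, and only then build the finite maximal chain. This works, but the passage through FCP is superfluous: the FMC-pair hypothesis already hands you, for each $T$, exactly the finite maximal chain you need. Ironically, your closing remark (``invoke Theorem~\ref{1.18} rather than trying to argue directly from the FMC-pair definition'') is backwards --- arguing directly from the definition is both shorter and is what the paper does. Also, the sentence about the affine-pair property being ``inherited by subextensions'' is muddled and plays no role; you can drop it.
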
 
\begin{proof}   Obvious, since an FMC extension is of finite type.
\end{proof}

\subsection{ Lattice properties of $[R,S]$}
In the context of a lattice $[R,S]$, some  definitions and properties of lattices have the following formulations.

\begin{enumerate}

 \item If  $R\subseteq S$ has FCP, then $[R,S]$ is a complete Noetherian Artinian lattice for intersection and compositum, whose least element is $R$  and $ S$ is its  largest element. For lattice properties, we use the definitions and  results of 
\cite{We}.  
\item An element $T$ of $[R,S]$ is an {\it atom} (resp.; {\it co-atom}) if and only if $R\subset T$ (resp.; $T\subset S$) is a minimal extension. 
\end{enumerate}

Now $R\subset S$ is called:

(a)  {\it semimodular} if, for each $T_1,T_2\in[R,S]$ such that $T_1\cap T_2\subset T_i$ is minimal for $i=1,2$, then $T_i\subset T_1T_2$ is minimal for $i=1,2$.

(b)  {\it atomistic} if each element of $[R,S]$ is the 
 join (or the least upper bound) of   a set of atoms (see \cite[page 80]{R}). In fact, this is equivalent to  each $T\in[R,S]$ is the  compositum of the atoms contained in $T$.
 
 (c) {\it  finitely atomistic} if, for each $T\in[R,S]$ there exists a finite set $\{A_1,\ldots,A_n\}$ of atoms  such that $T=A_1\cdots A_n$.  
 
(d)   {\it  (finitely) geometric} if it is semimodular and (finitely) atomistic (see \cite[Ex. 7.16, p. 274]{C}). A geometric lattice is sometimes called a {\it matroid} lattice. In another paper to be submitted, we examine  the link between geometric extensions and matroids. 

\begin{remark}\label{proplat}

 i)  If $R\subset S$ has FIP and is geometric, all maximal chains between two comparable elements have the same length (the Jordan-
H\"older
chain condition) \cite[Theorem 1, p. 172]{G}. 

ii) If  $R \subset S$ is  finitely atomistic, then  $R\subseteq S$ is an affine pair.
\end{remark}

 \begin{proposition}\label{7.3}   Let $R\subset S$ be a pointwise minimal extension whose set of atoms is $\mathcal A$.  Then, 
\begin{enumerate}
\item    $R\subset S$ is an affine pair if and only if $R\subset S$ is finitely atomistic. 

\item $R\subset S$ has FIP if and only if $|\mathcal A|<\infty$.

 \item  $R\subset S$ is 
  an affine pair  
 and a pointwise minimal  pair  if and only if $R\subset S$ is a finitely geometric FCP extension. 
 \end{enumerate}
\end{proposition}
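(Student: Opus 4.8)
The plan is to first strip away the degenerate situations, fix notation in the integral case, prove (1) and (2) by elementary vector-space arguments, and then deduce (3) from (1), (2) together with a ``transport a single element up a chain'' argument driven by semimodularity. To begin, by Propositions \ref{7.6} and \ref{7.7} a pointwise minimal extension is either integral or a minimal (Pr\"ufer) extension; in the latter case $[R,S]=\{R,S\}$, $\mathcal A=\{S\}$, and all three assertions hold trivially, so I may assume throughout that $R\subset S$ is integral. Then $M:=(R:S)\in\mathrm{Max}(R)$ by Proposition \ref{7.2}(1), every $R[t]$ with $t\in S\setminus R$ is an atom, conversely every atom is of this form, $S=\bigcup_{A\in\mathcal A}A$, and each atom is a finite $R$-module since the extension is integral. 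I would record two facts for repeated use: (i) $MS=M$ (the conductor is always an ideal of $S$), so with $k:=R/M$ the quotient $S/M$ is a $k$-vector space, each $T/M$ ($T\in[R,S]$) is a $k$-subspace, and $T\mapsto T/M$ is an inclusion-preserving bijection $[R,S]\to[k,S/M]$; (ii) hence each atom $A$ has $A/M$ finite-dimensional over $k$. It is also worth noting that a pointwise minimal extension is automatically atomistic, since each $T\neq R$ is the compositum of the atoms $R[t]$, $t\in T\setminus R$; the content of (1) is the word ``finitely''.

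For (1), if $R\subset S$ is an affine pair and $T\in[R,S]\setminus\{R\}$, I write $T=R[t_1,\dots,t_n]$, discard those $t_i$ lying in $R$, and observe that $T=R[t_1]\cdots R[t_n]$ is a compositum of finitely many atoms; the converse is Remark \ref{proplat}(ii). For (2), FIP $\Rightarrow|\mathcal A|<\infty$ is clear since $\mathcal A\subseteq[R,S]$. For the converse I would write $S/M=\bigcup_{A\in\mathcal A}(A/M)$ as a finite union of $k$-subspaces and split on $|k|$: if $k$ is infinite a vector space is not a finite union of proper subspaces, so $S=A$ for some atom and $R\subset S$ is minimal, hence has FIP; if $k$ is finite then each $A/M$ is finite by (ii), so $S/M$ is a finite ring, whence $[k,S/M]$ --- and therefore $[R,S]$, via the bijection in (i) --- is finite.

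For (3), the implication ``affine pair $+$ pointwise minimal pair $\Rightarrow$ finitely geometric FCP'' is routine: finite atomism comes from (1); FCP comes from Corollary \ref{7.30} once one notes that then $S=R[t_1,\dots,t_n]$ with the $t_i$ integral, so $S$ is module-finite over $R$; and semimodularity is a short direct check --- given $T_1\cap T_2\subset T_i$ minimal for $i=1,2$ (which forces $T_1,T_2\neq S$), choose $y\in T_2\setminus(T_1\cap T_2)$, note $T_2=(T_1\cap T_2)[y]$ and $y\notin T_1$, use pointwise minimality of $T_1\subset S$ to get $T_1\subset T_1[y]$ minimal, verify $T_1[y]=T_1T_2$, and argue symmetrically for $T_2$. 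The real work is the converse. Assume $R\subset S$ is finitely geometric and FCP; affineness is immediate from finite atomism, and it remains to show $R\subset S$ is a pointwise minimal pair. I fix $T\in[R,S]\setminus\{S\}$ and $x\in S\setminus T$ and choose a finite maximal chain $R=T_0\subset T_1\subset\cdots\subset T_m=T$ (available because FCP gives FMC). The claim is that $T_j\subset T_j[x]$ is minimal for every $j$, by induction on $j$: the base case $j=0$ is pointwise minimality of $R\subset S$ at $x$, and in the inductive step the crucial point is $T_{j+1}\cap T_j[x]=T_j$ --- this intersection lies in the two-element interval $[T_j,T_j[x]]$, and the alternative $T_j[x]\subseteq T_{j+1}$ is impossible because it would force $T_j[x]=T_{j+1}$ and hence $x\in T_{j+1}\subseteq T$ --- so semimodularity applied to $T_{j+1}$ and $T_j[x]$ gives that $T_{j+1}\subset T_{j+1}T_j[x]=T_{j+1}[x]$ is minimal. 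Taking $j=m$ gives $T\subset T[x]$ minimal.

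The main obstacle is precisely this converse of (3): the natural-looking move --- inducting on the length of $[R,T]$ ``from below'' --- fails, because it controls extensions $U\subset T$ but says nothing about $T\subset S$; one must instead carry the fixed element $x$ up a fixed maximal chain, applying semimodularity at each rung, and the delicate verification there is exactly that each $T_{j+1}\cap T_j[x]$ collapses to $T_j$, which is what makes the hypothesis of semimodularity applicable. Everything else is bookkeeping, as long as the degenerate cases ($R\subset S$ minimal or integrally closed, and $T=R$ in the atomism statements) are handled at the start.
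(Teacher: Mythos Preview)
Your proof is correct. Parts (1) and (3) follow the paper's argument almost verbatim; in the forward direction of (3) you obtain FCP via module-finiteness and Corollary~\ref{7.30}, whereas the paper builds a finite maximal chain from $R$ to each $T$, concludes that $R\subset S$ is an FMC pair, and then invokes Theorem~\ref{1.18} --- your route is slightly shorter here. The converse of (3) is identical to the paper's, including the key verification that $T_{j+1}\cap T_j[x]=T_j$.

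The only genuine departure is in (2). The paper argues purely combinatorially: once $\mathcal A=\{A_1,\dots,A_n\}$ with $A_i=R[x_i]$, every $T\in[R,S]$ equals $\prod_{i\in I}A_i$ where $I=\{i\mid x_i\in T\}$, so $|[R,S]|\le 2^n$. This avoids any reduction modulo $M$ and any case split. Your vector-space argument (writing $S/M$ as a finite union of subspaces $A/M$ and splitting on $|k|$) is correct and yields the bonus information that $R\subset S$ is already minimal when $R/M$ is infinite, consonant with Proposition~\ref{7.171}; but the paper's counting argument is more direct and uniform, and in particular does not need the preliminary reduction to the integral case that your approach to (2) relies on.
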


 \begin{proof} (1) Assume first that $R\subset S$ is  an affine pair. Any $T\in[R,S]$ is of the form $T=R[x_1,\ldots,x_n]=\prod_{i=1}^nR[x_i]$, where each $R\subset R[x_i]$ is  minimal, that is,  $R[x_i]$  is an atom. Therefore, $R\subset S$ is finitely atomistic. The converse is obvious. 
  
(2) Clearly, if $R\subset S$ has FIP,  $|\mathcal A|<\infty$. Conversely, assume that $|\mathcal A|<\infty$ and set $\mathcal A:=\{A_1,\ldots,A_n\}$.  For each $i\in\{1,\ldots,n\}$, there exists $x_i\in A_i$ such that $A_i=R[x_i]$.
For $T\in [R,S]$ and $x\in T\setminus R$,  the extension $R\subset R[x]$ is minimal,  and then $R[x]\in \mathcal A$. Setting  $I:=\{i\in\{1,\ldots,n\}\mid x_i\in T\}$, we get $T=\prod_{i\in I}R[x_i]$. 

 (3) If $R\subset S$ is 
  an affine pair, 
   $R\subset S$ is   finitely atomistic by (1). Hence, $R\subset S$ is  finitely geometric if and only if $[R,S]$ is semimodular. 

Assume that $R\subset S$ is also a pointwise minimal pair, and let $T_1,T_2 \in [R,S]$ be such that $T:=T_1\cap T_2\subset T_i$ is minimal for $i=1,2$. Let $x_i\in T_i\setminus T$ be such that $T_i=T[x_i]$ for $i=1,2$. Fix some $i=1,2$ and let $j\in\{1,2\}\setminus\{i\}$. Then $T_1T_2=T_iT_j=T_iT[x_j]=T_i [x_j]$, giving that $T_i\subset T_1T_2=T_i[x_j]$ is minimal for $i=1,2$, since $x_j\not\in T_i$ and $R\subset S$ is a pointwise minimal pair. Then, $[R,S]$ is semimodular, and finitely geometric. Any $T\in[R,S]$ can be written $T=R[x_1,\ldots,x_n]$ by (1). Setting $R_0:=R$ and $R_i:=R[x_1,\ldots,x_i]$, we can reorder the $x_i$s so that $x_{i+1}\not\in R_i$ for each $i\in\{0,\ldots,n-1\}$ and then   each $ R_i \subset R_{i+1}=R_i[x_{i+1}]$  is minimal. It follows that there exists a maximal finite chain of $R$-subalgebras of $T$, $R=R_0\subset R_1\subset\cdots\subset R_{n-1}\subset R_n=T$.  Finally,  $[R,S]$ is an FMC pair, and then an FCP extension by Proposition \ref{1.18}.

Conversely, assume that $R\subset S$ is a finitely geometric   FCP extensions, so that $[R,S]$ is semimodular. Let $T\in[R,S]$ and $x\in S\setminus T$.  Since $R\subset S$ has  FCP, there exists a maximal finite chain of $R$-subalgebras of $T$, $R=R_0\subset R_1\subset\cdots\subset R_{n-1}\subset R_n=T$, where each  $R_i\subset R_{i+1}$ is  minimal and of course, $x\not\in R_i$. We are going to show by induction on $i$ that $R_i\subset R_i[x]$ is minimal for each $i\in\{0,\ldots,n\}$. The property holds for $i=0$ since $R\subset S$ is  pointwise minimal. Assume that $R_i\subset R_i[x]$ is minimal for some $i\in\{0,\ldots,n-1\}$. Then, $R_i=R_i[x]\cap R_{i+1}$, because $x\notin R_{i+1}$, implies that $
R_{i+1} \subset R_{i+1}R_i[x]=R_{i+1}[x]$ is minimal. Since this property holds for each $i$, we get that $T=R_n\subset T[x]=R_n[x]$ is minimal, and $R\subset S$ is  a 
pointwise minimal  pair and an affine pair 
 since $R\subset S$ is  finitely atomistic.
\end{proof} 

 For  a set $X$ of atoms of  a geometric extension $[R,S]$, we set $T_X:=\prod_{R_\alpha\in X}R_\alpha$. A subset $I\subseteq [R,S]$ is called  {\it independent } 
(\cite[Definition 9, p.167]{G})  
if, for any finite subsets $J,K$ of $I$, we have $T_J\cap T_K=T_{J\cap K}$ . 
  
\begin{proposition}\label{7.31} A pointwise minimal pair $R\subset S$ has FCP if and only if  $R\subset S$  is 
 an affine pair  
and there exists a finite independent set $I$ of atoms such that $S=T_I$. If these conditions hold, $\ell[R,S]=|I|$. 
\end{proposition}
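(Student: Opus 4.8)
The plan is to exploit the characterization of FCP pointwise minimal pairs already obtained in Proposition~\ref{7.3}(3): a pointwise minimal pair $R\subset S$ that is an affine pair has FCP if and only if it is a finitely geometric FCP extension, i.e.\ $[R,S]$ is semimodular and finitely atomistic. So the real content of the proposition is to translate ``finitely geometric FCP'' into ``there exists a finite independent set $I$ of atoms with $S=T_I$'', and to compute the length. First I would dispose of the easy direction: assume $R\subset S$ has FCP (hence is an affine pair by Lemma~\ref{7.301} applied via Theorem~\ref{1.18}, since FCP $\Rightarrow$ FMC pair). By Proposition~\ref{7.3}(3), $[R,S]$ is finitely geometric, so $S$ is the compositum of the finitely many atoms contained in it; from this finite generating set of atoms one extracts a minimal (independent) subfamily still generating $S$, using that in a geometric lattice one can always pass to an independent spanning set — concretely, order the atoms $A_1,\dots,A_m$ and discard $A_j$ whenever $A_j\subseteq A_1\cdots A_{j-1}$, keeping exactly those atoms that strictly enlarge the running compositum. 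A standard argument in geometric lattices shows the resulting family is independent in the sense of the stated definition ($T_J\cap T_K=T_{J\cap K}$ for finite $J,K$), and spans $S$.

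For the converse, suppose $R\subset S$ is an affine pair, a pointwise minimal pair, and $S=T_I$ for some finite independent set $I=\{A_1,\dots,A_n\}$ of atoms. I would build an explicit maximal chain: set $R_0:=R$ and $R_i:=A_1\cdots A_i$. Each $A_i=R[x_i]$ for some $x_i$ with $R\subset R[x_i]$ minimal, so $R_{i+1}=R_i[x_{i+1}]$. The point is that $R_i\subset R_{i+1}$ is proper and minimal for every $i$: properness follows from independence, since $A_{i+1}\subseteq R_i=T_{\{1,\dots,i\}}$ would force $A_{i+1}=A_{i+1}\cap R_i=T_{\{i+1\}\cap\{1,\dots,i\}}=T_\emptyset=R$, contradicting that $A_{i+1}$ is an atom; and minimality of $R_i\subset R_i[x_{i+1}]$ is exactly the pointwise-minimal-pair hypothesis applied to $T:=R_i\in[R,S]$ and $x_{i+1}\notin R_i$. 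Thus $R=R_0\subset R_1\subset\cdots\subset R_n=S$ is a finite maximal chain, so $R\subset S$ has FMC; combined with the affine (hence FMC) pair hypothesis this yields FCP via Theorem~\ref{1.18} (an affine pair in which $R\subset S$ has FMC is, using Proposition~\ref{7.3}(3) or directly, an FCP extension — one checks $R\subseteq\overline R$ has FMC because $R\subset S$ does and FMC passes to subextensions of integral extensions).

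For the length statement, once FCP is established we know $[R,S]$ is finitely geometric and hence satisfies the Jordan--H\"older chain condition (Remark~\ref{proplat}(i)): every maximal chain between comparable elements has the same length. The chain $R=R_0\subset\cdots\subset R_n=S$ constructed above has length $n=|I|$, so $\ell[R,S]=|I|$. Independence also guarantees this value is intrinsic — any independent spanning set of atoms has the same cardinality, namely the rank of the geometric lattice $[R,S]$ — which is consistent with the Jordan--H\"older count.

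**Main obstacle.** The delicate point is the extraction of a \emph{finite independent} spanning family from the (finite) set of all atoms below $S$, and verifying it satisfies the combinatorial independence condition $T_J\cap T_K=T_{J\cap K}$ rather than merely ``no atom lies in the compositum of the others''. In a general lattice these are not equivalent, and one must genuinely use semimodularity (the geometric/matroid structure of $[R,S]$) to promote the weak independence obtained from the greedy discard procedure to the strong one in the definition; this is where the matroid exchange property does the work. The rest — minimality of each link, the FCP-equivalences, the length count — is routine given the earlier results.
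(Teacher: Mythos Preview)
Your proposal is correct and follows essentially the same route as the paper: both directions go through Proposition~\ref{7.3}(3) and Lemma~\ref{7.301} to get the (finitely) geometric lattice structure, then use the matroid/semimodular machinery to pass between ``finite independent spanning set of atoms'' and the length. The paper is simply more terse: instead of your greedy extraction and explicit chain-building, it invokes a single reference (Gr\"atzer, Theorem~4, p.~174) which packages both the fact that a minimal spanning set of atoms in a semimodular lattice is independent and that such a set has cardinality equal to the height; your ``main obstacle'' paragraph correctly isolates exactly what that theorem is doing. One small cleanup: for the converse you do not need the FMC gymnastics with $R\subseteq\overline R$---Proposition~\ref{7.3}(3) already gives FCP directly from ``affine pair $+$ pointwise minimal pair'', so the independent set is only needed for the length computation, not for FCP itself. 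Also note that Remark~\ref{proplat}(i) is stated under FIP, whereas you only have FCP; the Jordan--H\"older conclusion you want still holds (semimodular $+$ FCP suffices), but you should cite the underlying lattice result rather than that remark verbatim.
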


\begin{proof} 
When  $R\subset S$ is 
 an affine pair 
and a  
pointwise minimal pair, $[R,S]$ is semimodular  since geometric
 by  
 Proposition \ref{7.3} (3). 
 
Assume first that $R\subset S$ has FCP. Then, $R\subset S$ is 
 an affine pair 
by Lemma \ref{7.301} and there is a finite independent set $I$ of atoms such that $S=T_I$ by
Proposition \ref{7.3} (1)
 and  
\cite[Theorem 4, p.174]{G} (take a minimal set $I$ of atoms such that $S=T_I$). 

Conversely, assume that there is a finite independent set $I$ of $n$ atoms with $S=T_I$  and that $R\subset S$ is 
 an affine pair. 
 Since $R\subset S$ is 
  an affine pair and a  
  pointwise minimal pair, $[R,S]$ is semimodular. By \cite[Theorem 4, p.174]{G},  $\ell[R,S]=n$ is finite and then $R\subset S$ has FCP.
\end{proof}

\end{document}